\newtheorem*{thmA}{Theorem A}
\newtheorem*{assumA}{Assumption A}
\newtheorem{theorem}{Theorem}[section]
\newtheorem{lemma}[theorem]{Lemma}
\newtheorem{proposition}[theorem]{Proposition}
\newtheorem{definition}{Definition}[section]
\newtheorem{corollary}[theorem]{Corollary}
\newtheorem{remark}[theorem]{Remark}
\theoremstyle{remark}
\newtheorem*{ack}{Acknowledgments}
\numberwithin{equation}{section}
\begin{document}

\title[Alexandrov-Fenchel deficit]{Upper bounds for the Alexandrov-Fenchel deficit via integral formulas}

\author[K.-K. Kwong]{Kwok-Kun Kwong}
\address{School of Mathematics and Applied Statistics,
University of Wollongong\\
NSW 2522, Australia}
\email{\href{mailto:kwongk@uow.edu.au}{kwongk@uow.edu.au}}

\author[Y. Wei]{Yong Wei}
\address{School of Mathematical Sciences, University of Science and Technology of China, Hefei 230026, P.R. China}
\email{\href{mailto:yongwei@ustc.edu.cn}{yongwei@ustc.edu.cn}}
\date{\today}
\subjclass[2020]{53C42, 53C24}
\keywords{Minkowski formula, Alexandrov-Fenchel deficit, isoperimetric inequality}

\begin{abstract}
We derive a number of sharp upper bounds for the deficit in the Alexandrov-Fenchel inequality using a weighted Minkowski integral formula and an integral formula for the deficit in Jensen's inequality. Our estimates yield results under weaker convexity assumptions compared to approaches based on inverse curvature flows. The use of weighted formulas provides flexibility in deriving inequalities with different weight functions. Furthermore, our estimates are more quantitative as they include a distance term measuring the domain's deviation from a reference ball. We also analyze the stability of a weighted geometric inequality from a recent paper \cite{kwong2023geometric} via analysis of the support function on the sphere and show that, with an optimal choice of the origin, this inequality is stronger than the classical isoperimetric inequality.
\end{abstract}

\maketitle

\tableofcontents

\section{Introduction}

Geometric inequalities play a fundamental role in the study of hypersurfaces in space forms. Classical examples include the Alexandrov-Fenchel inequalities, the isoperimetric inequality, Minkowski inequalities, Brunn-Minkowski inequality and Willmore inequality, which have applications in convex geometry, curvature flows, eigenvalue estimates, and geometric analysis. In this paper, we derive a series of sharp geometric inequalities for closed hypersurfaces in space forms, utilizing a weighted version of the Minkowski integral formulas \cite{kwong2018weighted} and an integral formula for the deficit in Jensen's inequality. By selecting different weight functions, this approach enables the derivation of a number of sharp geometric inequalities, highlighting its flexibility. In Euclidean space, our results yield a number of families of sharp upper bounds for the deficit in the Alexandrov-Fenchel inequalities, refining existing inequalities and providing new stability results.

To set up our framework, we begin by introducing some notation and fundamental geometric quantities. Let $\Omega\subset \mathbb{R}^{n+1}$ be a smooth bounded domain enclosed by an embedded hypersurface $\Sigma=\partial\Omega$. The quermassintegrals of $\Omega$ can be expressed (up to a constant) as follows (\cite[\S 5.3]{Schneider2014}):
\begin{equation*}
I_{-1}=(n+1)|\Omega|, \quad I_k=\int_\Sigma H_kd\mu, ~k=0, 1, \cdots, n,
\end{equation*}
where $H_k=\binom{n}{k}^{-1}\sigma_k(\kappa)$ denotes the (normalized) $k$-th mean curvature of $\Sigma$. The classical Alexandrov-Fenchel inequality states that
\begin{thmA}[see {\cite[\S 7]{Schneider2014}}]
For a smooth convex bounded domain $\Omega\subset \mathbb{R}^{n+1}$, the quermassintegrals satisfy
\begin{equation}\label{s1.AF1}
I_{k-1}^2 - I_k I_{k-2} \ge 0, \quad \forall~k=1, \cdots, n-1
\end{equation}
or equivalently,
\begin{equation}\label{s1.AF2}
\frac{I_{k-1}}{I_k} - \frac{I_{k-2}}{I_{k-1}} \ge 0, \quad \forall~k=1, \cdots, n-1
\end{equation}
with equality holds if and only if $\Omega$ is a round ball.
\end{thmA}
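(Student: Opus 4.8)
The plan is to read \eqref{s1.AF1} as the statement that the sequence of quermassintegrals is log-concave, and to obtain it as a special case of the Alexandrov--Fenchel inequality for general mixed volumes. With the normalization built into the $H_k$ one has $I_j=(n+1)W_{j+1}(\Omega)$, where $W_m(\Omega)=V(\Omega[n+1-m],B[m])$ denotes the mixed volume of $\Omega$ taken $n+1-m$ times and of the unit ball $B$ taken $m$ times; thus \eqref{s1.AF1} is equivalent to
\begin{equation*}
W_k(\Omega)^2\ \ge\ W_{k-1}(\Omega)\,W_{k+1}(\Omega),\qquad k=1,\dots,n-1 .
\end{equation*}
This is precisely the instance of the Alexandrov--Fenchel inequality
\begin{equation*}
V(K,L,C_3,\dots,C_{n+1})^2\ \ge\ V(K,K,C_3,\dots,C_{n+1})\,V(L,L,C_3,\dots,C_{n+1})
\end{equation*}
in which $K=\Omega$, $L=B$, and the tuple $(C_3,\dots,C_{n+1})$ consists of $\Omega$ repeated $n-k$ times and $B$ repeated $k-1$ times, since then $V(K,K,C_3,\dots)=W_{k-1}(\Omega)$, $V(L,L,C_3,\dots)=W_{k+1}(\Omega)$ and $V(K,L,C_3,\dots)=W_k(\Omega)$. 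So it suffices to prove the mixed-volume inequality and to track its equality case through this specialization.

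For the mixed-volume inequality I would run Alexandrov's classical argument (as in \cite[\S 7]{Schneider2014}). First, by Minkowski's approximation theorem reduce to the case where all the bodies are polytopes, and then — after a further small perturbation — to \emph{strongly isomorphic} polytopes, i.e.\ polytopes with a common normal fan. For such a tuple, $V(K,L,C_3,\dots,C_{n+1})$ is a symmetric bilinear form $\mathcal{Q}(K,L)$ in the vectors of support numbers of $K$ and $L$, and the radical of $\mathcal{Q}$ contains all translations. The heart of the proof is to show that $\mathcal{Q}$ has \emph{exactly one} positive eigenvalue modulo its radical. This is done by induction on the dimension $n$: differentiating $\mathcal{Q}$ with respect to a support number amounts to passing to the corresponding facet, itself a lower-dimensional strongly isomorphic polytope where the inductive hypothesis applies, and the signature is then propagated by a connectedness argument — the positive index of $\mathcal{Q}$ is constant over the (connected) family of admissible tuples, hence equals its value in a model configuration, e.g.\ $C_3=\dots=C_{n+1}$ a fixed ball, where $\mathcal{Q}$ diagonalizes against spherical harmonics and its positive part is read off from the spectrum of $\Delta_{S^n}+n$. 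Once the signature is known to be $(1,N-1)$ modulo the radical, the reverse Cauchy--Schwarz inequality for indefinite forms yields $\mathcal{Q}(K,L)^2\ge\mathcal{Q}(K,K)\,\mathcal{Q}(L,L)$ whenever $\mathcal{Q}(L,L)>0$; here $\mathcal{Q}(B,B)=V(\Omega[n-k],B[k+1])$ is a mixed volume of full-dimensional bodies, hence positive.

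Finally, equality in the reverse Cauchy--Schwarz forces $K$ to lie in the span of $L$ modulo the radical, i.e.\ $\Omega$ and $B$ to coincide up to translation and dilation; unwinding this through the approximation steps — the genuinely delicate point of the classical argument, cf.\ \cite[\S 7.6]{Schneider2014} — shows that equality in \eqref{s1.AF1}, equivalently in \eqref{s1.AF2}, holds only for round balls, while the converse is immediate since all quermassintegral ratios of a ball coincide. I expect the index-one claim for $\mathcal{Q}$ to be the single serious obstacle: it \emph{is} the Alexandrov--Fenchel theorem, and the induction-plus-connectedness scheme above is essentially the only elementary route to it. In the smooth strictly convex setting of Theorem~A one can alternatively argue analytically, expressing the quermassintegrals as integrals of elementary symmetric functions of $\nabla^2h+h\,g$ against the support function $h$ on $S^n$ and controlling the spectrum of the linearization of $\sigma_m$ (a perturbation of $\Delta_{S^n}+n$), or via a suitable curvature flow along which the quantity $I_{k-1}/I_k-I_{k-2}/I_{k-1}$ is monotone and which converges to a round sphere; but each of these requires comparable eigenvalue bookkeeping, and the flow approach needs the relevant convexity to be preserved along the evolution.
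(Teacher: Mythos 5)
The paper does not prove Theorem~A at all --- it is stated as background, with a pointer to \cite[\S 7]{Schneider2014} (specifically (7.63) there), and then \emph{used} throughout as a black box. So there is no internal proof to compare yours against; what you have done is expand the citation into a sketch of Alexandrov's classical argument, which is precisely the proof Schneider presents.

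Your reduction is correct: with $I_j=(n+1)W_{j+1}(\Omega)$, the chain $I_{k-1}^2\ge I_k I_{k-2}$, $k=1,\dots,n-1$, is equivalent to $W_k^2\ge W_{k-1}W_{k+1}$, and taking $K=\Omega$, $L=B$, and $(C_3,\dots,C_{n+1})=(\Omega[n-k],B[k-1])$ in $V(K,L,C_3,\dots)^2\ge V(K,K,C_3,\dots)V(L,L,C_3,\dots)$ recovers exactly this case; I checked the bookkeeping and it is right. The outline of Alexandrov's proof (approximation by strongly isomorphic polytopes, the mixed volume as a quadratic form in support numbers whose radical contains translations, the hyperbolicity/index-one claim proved by induction on dimension plus connectedness, reverse Cauchy--Schwarz) matches \cite[\S 7.3]{Schneider2014}. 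One caution on the equality case, which you rightly flag as delicate: for \emph{general} reference bodies $C_3,\dots,C_{n+1}$ the equality characterization in Alexandrov--Fenchel is genuinely hard (and was only settled in full recently), but in the quermassintegral case at hand the reference bodies include balls, and the ``$\Omega$ is a ball'' conclusion is classical; it would be cleaner to invoke the specific equality discussion for quermassintegrals (Schneider, Theorem 7.6.8 and surrounding material) rather than appeal to the generic equality machinery. Overall your sketch is accurate as far as it goes, but note that you have supplied a proof for something the paper deliberately outsources, and the genuinely hard step --- the index-one/hyperbolicity claim --- is, as you yourself say, essentially the whole theorem, so the ``proof'' here is really a roadmap to Schneider rather than a self-contained argument.
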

\noindent This result is a special case of the Alexandrov-Fenchel inequality for mixed volumes in the theory of convex bodies, cf. \cite[(7.63)]{Schneider2014}.

In this paper, motivated by \eqref{s1.AF2}, we first derive a formula for the difference $\varphi\left(\frac{I_{k-1}}{I_k}\right) - \varphi\left(\frac{I_{k-2}}{I_{k-1}}\right)$ for an arbitrary function $\varphi$ (see Proposition \ref{prop rn comp}). By considering different functions $\varphi$, this formula leads to a series of sharp geometric inequalities, generally of the form:
\begin{equation*}
\boxed{ \quad \text{deficit} + \text{distance}^2 \le \text{integral} \quad}
\end{equation*}
where the ``deficit'' refers to the deficit $I_{k-1}^2 - I_k I_{k-2}$ (or alternatively, the ratio $\frac{I_{k-1}^2}{I_k I_{k-2}}$) from the Alexandrov-Fenchel inequality. The ``distance'' represents a measure of how far $\Omega$ deviates from a particular reference ball, while ``integral'' is a non-negative integral $\mathrm{I}$ over $\partial \Omega$, provided that $\partial \Omega$ satisfies certain convexity conditions. A key property is that the non-negative integral $\mathrm{I}$ vanishes precisely when $\Omega$ is a ball centered at the origin. As a particular example, we prove
\begin{theorem}\label{s1.thm1}
Let $\Sigma=\partial \Omega$ be a smooth, closed hypersurface in $\mathbb{R}^{n+1}$ with  $H_k \ge 0$. Then
\begin{align*}
\frac{I_{k-1}^2- I_k I_{k-2}}{I_k}+\delta_{2, k}\left(\Omega, B_0\left(\frac{I_{k-1}}{I_k}\right)\right)^2=\frac{1}{k\binom {n}{k}} \int_{\Sigma} T_{k-1}\circ A\left(X^T, X^T\right)d\mu,
\end{align*}
where $\delta_{2, k}$ is the weighted $L^2$ distance (see Section \ref{sec. identity}), $T_{k}$, $A$ are the $k$-th Newton tensor and shape operator of $\Sigma$ respectively, and $X^T$ is the tangential component of the position vector $X$. In particular, we have an upper bound for the deficit in the Alexandrov-Fenchel inequality:
\begin{equation}\label{s1. ineq mink deficit}
\frac{I_{k-1}^2- I_k I_{k-2}}{I_k}\le\frac{1}{k\binom{n}{k}} \int_{\Sigma} T_{k-1} \circ A\left(X^T, X^T\right) d \mu.
\end{equation}
If $H_k>0$, then the equality holds if and only if $\Sigma$ is a sphere centered at the origin.
\end{theorem}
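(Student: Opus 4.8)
The plan is to deduce the identity from a single divergence identity on $\Sigma$ combined with the classical Minkowski formula; this is the special case $\varphi(x)=x$ of the general mechanism in Proposition \ref{prop rn comp}, and the ``completing the square'' step below is precisely the integral formula for the deficit in Jensen's inequality applied to $\varphi(x)=x^2$, i.e.\ the variance. Throughout I write $u=\langle X,\nu\rangle$ for the support function with respect to the origin, $c=I_{k-1}/I_k$, and I interpret $(T_{k-1}\circ A)(X^T,X^T)=\langle T_{k-1}(A(X^T)),X^T\rangle$; I assume $I_k>0$, which is automatic once $H_k>0$ somewhere.

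First I would establish the divergence identity. Using that $T_{k-1}$ is symmetric, commutes with the shape operator $A$, and is divergence free on $\Sigma$ (since $\mathbb{R}^{n+1}$ is flat), together with $\operatorname{tr}T_{k-1}=(n-k+1)\binom{n}{k-1}H_{k-1}=k\binom{n}{k}H_{k-1}$, $\operatorname{tr}(T_{k-1}A)=k\binom{n}{k}H_k$, and the tangential derivatives $\nabla_\Sigma X^T=\operatorname{Id}-uA$ and $\nabla_\Sigma u=A(X^T)$, one computes for the tangential field $Z:=u\,T_{k-1}(X^T)$ that $\operatorname{div}_\Sigma Z=(T_{k-1}\circ A)(X^T,X^T)+k\binom{n}{k}\,u(H_{k-1}-uH_k)$. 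Integrating over the closed hypersurface $\Sigma$ and dividing by $k\binom{n}{k}$ gives $\frac{1}{k\binom{n}{k}}\int_\Sigma(T_{k-1}\circ A)(X^T,X^T)\,d\mu=\int_\Sigma u^2H_k\,d\mu-\int_\Sigma uH_{k-1}\,d\mu$.

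Next I would invoke the Minkowski formulas $\int_\Sigma H_{k-1}\,d\mu=\int_\Sigma uH_k\,d\mu$ and $\int_\Sigma H_{k-2}\,d\mu=\int_\Sigma uH_{k-1}\,d\mu$ (the latter reading as $I_{-1}=\int_\Sigma u\,d\mu$, i.e.\ the divergence theorem, when $k=1$), so that the right-hand side above becomes $\int_\Sigma u^2H_k\,d\mu-I_{k-2}$. Since $c$ is exactly the $H_k\,d\mu$-average of $u$, completing the square gives $\int_\Sigma(u-c)^2H_k\,d\mu=\int_\Sigma u^2H_k\,d\mu-c^2I_k=\int_\Sigma u^2H_k\,d\mu-I_{k-1}^2/I_k$, and hence $\frac{1}{k\binom{n}{k}}\int_\Sigma(T_{k-1}\circ A)(X^T,X^T)\,d\mu=\int_\Sigma(u-c)^2H_k\,d\mu+\frac{I_{k-1}^2-I_kI_{k-2}}{I_k}$. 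Identifying $\int_\Sigma(u-c)^2H_k\,d\mu$ with $\delta_{2,k}(\Omega,B_0(c))^2$ (per the definition in Section \ref{sec. identity}; the support function of $B_0(c)$ is the constant $c$) yields the stated identity, and \eqref{s1. ineq mink deficit} then follows at once because $\delta_{2,k}(\Omega,B_0(c))^2\ge 0$ when $H_k\ge 0$ — in particular no pointwise sign for $(T_{k-1}\circ A)(X^T,X^T)$ is required. For the equality case when $H_k>0$: vanishing of $\delta_{2,k}(\Omega,B_0(c))^2$ forces $u\equiv c$ on $\Sigma$, so at any maximum or minimum point $p$ of $|X|^2$ on $\Sigma$ we get $0=\nabla_\Sigma|X|^2(p)=2X^T(p)$, whence $X(p)=u(p)\nu(p)=c\,\nu(p)$ and $|X(p)|^2=c^2$; therefore $|X|^2\equiv c^2$, forcing $c>0$ and $\Sigma=\partial B_0(c)$, a sphere centered at the origin, and the converse is immediate since $X^T$ vanishes identically there.

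I expect the only genuinely delicate point to be the constant bookkeeping in the first step — matching the factors $k\binom{n}{k}$ via $(n-k+1)\binom{n}{k-1}=k\binom{n}{k}$ and keeping the sign conventions for $\nu$ and $A$ mutually consistent throughout the computation of $\operatorname{div}_\Sigma Z$; once that is pinned down, the ``deficit $+$ distance$^2$ $=$ integral'' structure is forced by completing the square against the probability measure $H_k\,d\mu/I_k$.
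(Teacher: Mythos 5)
Your proof is correct and is essentially the paper's argument: the paper obtains the same starting identity by invoking the weighted Minkowski formula \eqref{weighted mink space form} with $f=u$ (which is proved in the cited reference by exactly the divergence computation you carry out for $Z=u\,T_{k-1}(X^T)$), and then packages your completing-the-square step as the $\psi(t)=t^2$ case of the Jensen-deficit lemma (Lemma \ref{lem jensen}) via Proposition \ref{prop rn comp} with $\varphi(t)=t$. The only substantive addition in your write-up is the max/min of $|X|^2$ argument filling in why $u\equiv c$ forces $\Sigma=\partial B_0(c)$, which the paper states without detail.
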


For additional results on sharp upper bounds for the deficit in Alexandrov-Fenchel inequalities, see Theorems \ref{thm mink deficit} through \ref{thm um rn}. While some similar inequalities have been previously obtained via inverse curvature flows, we employ an alternative approach based on weighted Minkowski integral formulas, which leads to improved quantitative estimates and exact equalities in some cases.

As an application of Theorem \ref{s1.thm1}, we provide a stability result of the following geometric inequality:
\begin{equation}\label{s1.kw}
\int_{\Sigma} H_n |X|^2 \, d\mu - \frac{n+1}{\omega_n}I_{n-1}^2 + n I_{n-2}~\ge ~0
\end{equation}
for a closed, convex hypersurface in $\mathbb{R}^{n+1}$, proved by the authors \cite[Theorem 1.3]{kwong2023geometric} using inverse curvature flow.

\begin{corollary}\label{s1.cor1}
Let $\Sigma=\partial \Omega$ be a smooth, closed convex hypersurface in $\mathbb{R}^{n+1}$. Then
\begin{equation*}
n\delta_2\left(\Omega, B_0\left(\frac{I_{n-1}}{\omega_n}\right)\right)^2 \le \int_\Sigma H_n|X|^2d\mu-\frac{n+1}{\omega_n} I_{n-1}^2+n I_{n-2},
\end{equation*}
where $\delta_2$ is the $L^2$ distance between two convex bodies (see \eqref{s3. defL2} for the definition). The equality holds if and only if $\Sigma$ is a sphere centered at the origin.
\end{corollary}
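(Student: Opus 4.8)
\emph{Proposal.} The idea is to apply Theorem~\ref{s1.thm1} with $k=n$ and then rewrite everything in terms of the support function of $\Omega$ via the Gauss map. Since $\Sigma$ is convex we have $H_n\ge 0$, so Theorem~\ref{s1.thm1} applies; moreover the Gauss map $\nu\colon\Sigma\to\mathbb{S}^n$ has degree one, so $I_n=\int_\Sigma H_n\,d\mu=\omega_n$ and therefore $B_0(I_{n-1}/I_n)=B_0(I_{n-1}/\omega_n)$. The top Newton tensor vanishes, $T_n=0$ (Cayley--Hamilton), which gives $T_{n-1}\circ A=H_n\,\mathrm{Id}$, so the right-hand side of the identity in Theorem~\ref{s1.thm1} equals $\tfrac1n\int_\Sigma H_n|X^T|^2\,d\mu$. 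Writing $u:=\langle X,\nu\rangle$ with $\nu$ the outward unit normal and using $|X^T|^2=|X|^2-u^2$, that identity becomes
\begin{equation*}
\frac{I_{n-1}^2-\omega_n I_{n-2}}{\omega_n}+\delta_{2,n}\!\left(\Omega,B_0\!\left(\tfrac{I_{n-1}}{\omega_n}\right)\right)^{2}=\frac1n\int_\Sigma H_n|X|^2\,d\mu-\frac1n\int_\Sigma H_n u^2\,d\mu .
\end{equation*}

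Next I would parametrize $\Sigma$ by the inverse Gauss map, so that $H_n\,d\mu$ becomes the round measure $d\sigma$ on $\mathbb{S}^n$, $u$ is the support function of $\Omega$, and $X=\nabla_{\mathbb{S}^n}u+u\,\nu$, whence $|X|^2=u^2+|\nabla u|^2$. Then $\int_\Sigma H_n|X|^2\,d\mu=\int_{\mathbb{S}^n}(u^2+|\nabla u|^2)\,d\sigma$, $\int_\Sigma H_n u^2\,d\mu=\int_{\mathbb{S}^n}u^2\,d\sigma$ and $I_{n-1}=\int_{\mathbb{S}^n}u\,d\sigma$; also, from the Minkowski formula $I_{n-2}=\int_\Sigma H_{n-1}\langle X,\nu\rangle\,d\mu$ together with $H_{n-1}\,d\mu=\tfrac1n(\Delta_{\mathbb{S}^n}u+nu)\,d\sigma$ and one integration by parts, $I_{n-2}=\int_{\mathbb{S}^n}u^2\,d\sigma-\tfrac1n\int_{\mathbb{S}^n}|\nabla u|^2\,d\sigma$. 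Substituting these into the displayed identity and simplifying, the weighted distance collapses to $\delta_{2,n}(\Omega,B_0(I_{n-1}/\omega_n))^2=\int_{\mathbb{S}^n}(u-\bar u)^2\,d\sigma$, where $\bar u:=I_{n-1}/\omega_n$; since the support function of $B_0(\bar u)$ is the constant $\bar u$, this is precisely $\delta_2(\Omega,B_0(I_{n-1}/\omega_n))^2$ in the sense of~\eqref{s3. defL2}. Feeding the same substitutions into the right-hand side of the asserted inequality gives, after a short computation,
\begin{equation*}
\int_\Sigma H_n|X|^2\,d\mu-\frac{n+1}{\omega_n}I_{n-1}^2+nI_{n-2}=(n+1)\int_{\mathbb{S}^n}(u-\bar u)^2\,d\sigma=(n+1)\,\delta_2\!\left(\Omega,B_0\!\left(\tfrac{I_{n-1}}{\omega_n}\right)\right)^{2},
\end{equation*}
so the corollary follows from $n\le n+1$, with equality if and only if $\int_{\mathbb{S}^n}(u-\bar u)^2\,d\sigma=0$, i.e.\ if and only if $\Omega=B_0(\bar u)$ is a ball centered at the origin.

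I expect the only genuinely delicate point to be the identification of the curvature-weighted distance $\delta_{2,n}$ with the $L^2$ distance $\delta_2$, and the matching of the equality cases; this is exactly where the Gauss-map change of variables does the work, since under it $\delta_{2,n}$ literally becomes the unweighted $L^2$ distance on the sphere. A slightly less computational route to the inequality (though not to the sharp constant $n+1$) avoids evaluating $I_{n-2}$ altogether: rearranging the displayed identity yields
\begin{equation*}
\int_\Sigma H_n|X|^2\,d\mu-\frac{n+1}{\omega_n}I_{n-1}^2+nI_{n-2}=n\,\delta_{2,n}\!\left(\Omega,B_0\!\left(\tfrac{I_{n-1}}{\omega_n}\right)\right)^{2}+\left(\int_\Sigma H_n u^2\,d\mu-\frac{I_{n-1}^2}{\omega_n}\right),
\end{equation*}
and the bracketed term is nonnegative by the Cauchy--Schwarz inequality for the probability measure $\omega_n^{-1}H_n\,d\mu$ combined with the Minkowski formula $\int_\Sigma H_n u\,d\mu=I_{n-1}$; together with $\delta_{2,n}\ge\delta_2$ this again gives the estimate.
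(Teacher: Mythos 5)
Your proposal is correct, and it actually contains two valid arguments. Your \emph{main} route --- passing to the support function on $\mathbb S^n$ via the inverse Gauss map and computing $\int_{\mathbb S^n}(u^2+|\nabla u|^2)\,d\sigma$, $\int_{\mathbb S^n}u\,d\sigma$, and $I_{n-2}=\int_{\mathbb S^n}u^2\,d\sigma-\tfrac1n\int_{\mathbb S^n}|\nabla u|^2\,d\sigma$ --- proves the sharper identity $\int_\Sigma H_n|X|^2\,d\mu-\tfrac{n+1}{\omega_n}I_{n-1}^2+nI_{n-2}=(n+1)\,\delta_2(\Omega,B_0(I_{n-1}/\omega_n))^2$, from which the corollary follows by $n<n+1$. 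This is precisely the content and method of Theorem \ref{thm1} in Section \ref{sec. weight}, which the paper presents as a separate, stronger result. Your ``less computational route'' at the end --- split $|X|^2=|X^T|^2+u^2$, use the identity from Theorem \ref{thm mink deficit} for the tangential part, and Cauchy--Schwarz together with the Minkowski formula for $\int_\Sigma H_n u^2\,d\mu\ge I_{n-1}^2/\omega_n$ --- is exactly the paper's own proof of Corollary \ref{thm Hn x2}. The trade-off is clear: the support-function route yields the optimal constant $n+1$ and a clean equality case ($\delta_2=0$), but relies on the Gauss map parametrization and hence convexity in an essential way; the Cauchy--Schwarz route loses the constant but stays entirely within the framework of weighted Minkowski formulas. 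Two small remarks: the relation between the two distances is $\delta_{2,n}=\delta_2$ (not merely $\ge$) for convex hypersurfaces, as recorded after \eqref{s3. defL2}; and once you are doing the support-function computation directly, the appeal to Theorem \ref{s1.thm1} in the main route becomes redundant --- the displayed identity is a consequence of, rather than an input to, the spherical calculus.
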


In Section \ref{sec. weight}, we will provide another proof of \eqref{s1.kw} by using a completely different method based on the analysis of the support function on the sphere, which additionally offers an explicit interpretation of the deficit in \eqref{s1.kw} in terms of the $L^2$ distance from the ball $B_0\left(\frac{I_{n-1}}{\omega_n}\right)$. The result states as follows:

\begin{theorem}\label{s1.thm2}
For a closed convex hypersurface $\Sigma=\partial \Omega$ in $\mathbb R^{n+1}$,
\begin{equation*}
\int_{\Sigma} H_{n} |X|^2 d \mu - \frac{n+1}{\omega_n}I_{n-1}^2 + n I_{n-2}
=(n+1)\delta_2\left(\Omega, B_0(r)\right)^2.
\end{equation*}
Here $B_0(r)$ is the ball centered at 0 with the radius $r=\frac{I_{n-1}}{\omega_n}$, which also equals to the half of the mean width $\overline{w}(\Omega)$ of $\Omega$.
\end{theorem}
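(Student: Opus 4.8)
\emph{Proof proposal.} The plan is to parametrize $\Sigma$ by the inverse of its Gauss map and to rewrite all three terms on the left-hand side as integrals over $S^n$ of the support function $h$ of $\Omega$ taken with respect to the origin. By a standard approximation it suffices to treat the case in which $\Sigma$ is smooth and strictly convex, so that $h\in C^\infty(S^n)$ and the boundary point with outer unit normal $\xi\in S^n$ is $X(\xi)=h(\xi)\xi+\nabla h(\xi)$, where $\nabla$, $\nabla^2$ and $\Delta$ denote gradient, Hessian and Laplacian on the round sphere. The reverse Weingarten map $\widetilde W=\nabla^2 h+h\,\mathrm{Id}$ has eigenvalues the principal radii $r_i=1/\kappa_i>0$, and the elementary symmetric function identity $\sigma_k(\kappa)\,d\mu=\sigma_{n-k}(r)\,d\theta$ (where $d\theta$ is the round volume element of $S^n$) gives $H_n\,d\mu=d\theta$, $H_{n-1}\,d\mu=\tfrac1n(\Delta h+nh)\,d\theta$ and $H_{n-2}\,d\mu=\tfrac{2}{n(n-1)}\sigma_2(\widetilde W)\,d\theta$. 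Since $|X|^2=h^2+|\nabla h|^2$, this yields immediately
\[
\int_\Sigma H_n|X|^2\,d\mu=\int_{S^n}\bigl(h^2+|\nabla h|^2\bigr)\,d\theta,\qquad I_{n-1}=\int_{S^n}h\,d\theta .
\]

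The main computational step is to reduce $I_{n-2}$ to a quadratic functional in $h$. Expanding $\sigma_2(\widetilde W)=\tfrac12\bigl((\Delta h+nh)^2-\|\nabla^2 h+h\,\mathrm{Id}\|^2\bigr)$ and integrating over $S^n$, the cross terms are handled via $\int h\,\Delta h=-\int|\nabla h|^2$, while $\int\|\nabla^2 h\|^2$ is eliminated using the Bochner formula on $S^n$ (with $\mathrm{Ric}=(n-1)g$): integrating $\tfrac12\Delta|\nabla h|^2=\|\nabla^2 h\|^2+\langle\nabla h,\nabla\Delta h\rangle+(n-1)|\nabla h|^2$ gives $\int\|\nabla^2 h\|^2=\int(\Delta h)^2-(n-1)\int|\nabla h|^2$. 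Collecting terms one finds $\int_{S^n}\sigma_2(\widetilde W)\,d\theta=\tfrac{n-1}{2}\bigl(n\int h^2-\int|\nabla h|^2\bigr)$, hence
\[
I_{n-2}=\int_{S^n}h^2\,d\theta-\frac1n\int_{S^n}|\nabla h|^2\,d\theta .
\]

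Substituting the three formulas into the left-hand side, the gradient terms cancel exactly and one is left with
\begin{align*}
\int_\Sigma H_n|X|^2\,d\mu-\frac{n+1}{\omega_n}I_{n-1}^2+nI_{n-2}
&=(n+1)\left(\int_{S^n}h^2\,d\theta-\frac{1}{\omega_n}\Bigl(\int_{S^n}h\,d\theta\Bigr)^2\right)\\
&=(n+1)\int_{S^n}\bigl(h-\bar{h}\bigr)^2\,d\theta,
\end{align*}
where $\bar{h}=\omega_n^{-1}\int_{S^n}h\,d\theta$. Since the support function of $B_0(r)$ is the constant $r$, taking $r=\bar{h}=I_{n-1}/\omega_n$ makes the right-hand side precisely $(n+1)\,\delta_2(\Omega,B_0(r))^2$ by the definition of the $L^2$-distance in \eqref{s3. defL2}; moreover $I_{n-1}=\int_{S^n}h\,d\theta$ together with $\overline{w}(\Omega)=\tfrac{2}{\omega_n}\int_{S^n}h\,d\theta$ shows $r=\tfrac12\overline{w}(\Omega)$. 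The technical heart of the argument is the Bochner/integration-by-parts reduction of $I_{n-2}$; the only other point requiring care is the passage from strictly convex $\Sigma$ to general smooth convex $\Sigma$, where one approximates $\Omega$ by strictly convex bodies (equivalently regularizes $h$ on $S^n$) and passes to the limit in the continuous functionals appearing on both sides.
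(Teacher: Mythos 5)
Your proof is correct and follows essentially the same route as the paper: parametrize by the inverse Gauss map, express $\int_\Sigma H_n|X|^2\,d\mu$, $I_{n-1}$, and $I_{n-2}$ as integrals of the support function $h$ over $S^n$, and observe that the gradient terms cancel. The only difference is that the paper directly quotes the formula $I_{n-2}=\tfrac1n\int_{S^n}u(\Delta u+nu)\,d\theta$ as known, whereas you derive the equivalent $I_{n-2}=\int_{S^n}h^2\,d\theta-\tfrac1n\int_{S^n}|\nabla h|^2\,d\theta$ from $\sigma_2(\widetilde W)$ via the Bochner formula, which is a correct and self-contained supplement rather than a genuinely different argument.
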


Theorem \ref{s1.thm2} serves a dual purpose: it strengthens inequality \eqref{s1.kw}, offering a new proof of a result in \cite{kwong2023geometric}, while also quantifying its deficit as the $L^2$ distance between $\Omega$ and a ball with the same mean width.

\begin{remark}
The inequality \eqref{s1.kw} serves as an example illustrating three distinct approaches to proving geometric inequalities:
\begin{itemize}
\item the geometric flow method (\cite[Theorem 1.3]{kwong2023geometric}),
\item the integral method (Corollary \ref{s1.cor1}), and
\item analysis of the support function on the sphere (Theorem \ref{s1.thm2}).
\end{itemize}
Each of these methods offers unique advantages.

The geometric flow method, with inverse curvature flow as an example, is a well-established approach with extensive results on existence, uniqueness, limiting behavior, and the preservation of various convexity conditions. This makes it a natural tool for studying geometric inequalities, particularly those related to different types of curvatures.
However, deriving Alexandrov-Fenchel type inequalities like \eqref{s1.AF1} via flows remains challenging. Flows can often be designed to preserve one geometric quantity while making another monotone, but extending this approach to inequalities involving three or more quantities presents additional difficulties, as it requires maintaining control over all terms. Moreover, inverse curvature flow requires star-shapedness for long-time existence, which imposes restrictions on the topological type of $\Sigma$ in which such inequalities can be proved.

The integral method, based on weighted Minkowski integral formulas, offers several advantages:
\begin{itemize}
\item It does not require star-shapedness and often assumes weaker convexity.
\item It provides more quantitative estimates and, in some cases, leads to exact equations rather than just inequalities.
\item This enables a more precise stability analysis, making it particularly useful for studying the stability of geometric inequalities.
\end{itemize}
These properties make the integral approach effective in deriving inequalities involving multiple curvatures or more than two geometric terms.

The method based on analysis of the support function on the sphere, on the other hand, provides the most precise information, at least for this inequality. It yields an exact formula for the deficit, quantifying the deviation from the reference sphere using an $L^2$ distance. However, its applicability appears to be more limited, particularly in higher dimensions, where it seems to be most effective in the convex setting. This limits the geometric and topological types of $\Sigma$ to which this method can be applied.

\end{remark}

The deficit in \eqref{s1.kw} clearly depends on the choice of the origin; however, we can show that it is minimized when the origin is translated to the ``Steiner point'' of $\Omega$. This follows from a minimization property of the $L^2$ distance associated with the Steiner ball $B_z(\Omega)$ (see Corollary \ref{cor steiner pt}).

Furthermore, using the known result that the isoperimetric deficit is bounded below by the $L^2$ distance $\delta_2(\Omega, B_z(\Omega))$ (see \cite[Theorem 5.3.1]{Groemer1996}), we prove that, within the class of convex bodies, inequality \eqref{s1.kw} is stronger than the classical isoperimetric inequality after an optimal choice of the origin.

\begin{corollary}
Let $\Sigma=\partial \Omega$ be a closed smooth convex hypersurface in $\mathbb{R}^{n+1}$. Assume (by translation) that the Steiner point of $\Omega$ is $0$, then
\begin{align*}
& \int_{\Sigma} H_{n}|X|^2 d \mu - \frac{n+1}{\omega_n}I_{n-1}^2 + n I_{n-2}\\
\le&
\frac{(n+1)I_{n-2}(\Omega)}{\eta_n |\Omega|^{n}} \left[\left(\frac{|\Sigma|}{\left|\mathbb S^{n}\right|} \right)^{n+1} - \left(\frac{ |\Omega| }{ \left| \mathbb B^{n+1} \right| } \right)^n \right],
\end{align*}
where $\eta_n=\frac{n+2}{n|\mathbb{B}^{n+1}|^n}$.
\end{corollary}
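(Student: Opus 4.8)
The plan is to combine the exact identity of Theorem \ref{s1.thm2} with the quantitative stability version of the classical isoperimetric inequality. By Theorem \ref{s1.thm2}, the left-hand side of the asserted inequality equals $(n+1)\,\delta_2\!\left(\Omega,B_0(r)\right)^2$, where $r=\frac{I_{n-1}}{\omega_n}=\frac12\overline{w}(\Omega)$. Since $\Omega$ has been translated so that its Steiner point lies at the origin, the ball $B_0(r)$ --- the ball centered at the origin whose radius is half the mean width --- is precisely the Steiner ball $B_z(\Omega)$ (cf.\ Corollary \ref{cor steiner pt}). Hence the quantity to be bounded above is exactly $(n+1)\,\delta_2\!\left(\Omega,B_z(\Omega)\right)^2$.

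The second ingredient is the stability estimate \cite[Theorem 5.3.1]{Groemer1996}, which controls $\delta_2(\Omega,B_z(\Omega))^2$ by the isoperimetric deficit of $\Omega$. Rephrased in the conventions of the present paper --- ambient space $\mathbb{R}^{n+1}$, the distance $\delta_2$ of \eqref{s3. defL2}, and the isoperimetric deficit written in the scale-invariant form $\left(\frac{|\Sigma|}{|\mathbb{S}^n|}\right)^{n+1}-\left(\frac{|\Omega|}{|\mathbb{B}^{n+1}|}\right)^n$ (which is nonnegative by the isoperimetric inequality, with equality only for balls) --- this estimate reads
\[
\delta_2\!\left(\Omega,B_z(\Omega)\right)^2 \;\le\; \frac{I_{n-2}(\Omega)}{\eta_n\,|\Omega|^{n}}\left[\left(\frac{|\Sigma|}{|\mathbb{S}^n|}\right)^{n+1}-\left(\frac{|\Omega|}{|\mathbb{B}^{n+1}|}\right)^n\right],\qquad \eta_n=\frac{n+2}{n\,|\mathbb{B}^{n+1}|^{n}}.
\]
Multiplying through by $n+1$ and inserting the identity from the first paragraph yields the claimed inequality.

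The one genuinely delicate point is the constant bookkeeping in the second step. Groemer's theorem is stated in his own normalization: his ambient dimension, his form of the $L^2$ deviation, and his presentation of the isoperimetric deficit (typically a difference of a power of the surface area and a power of the volume, scaled by a quermassintegral of $\Omega$). The task is therefore to translate that statement into the present framework, tracking carefully how the quermassintegral $I_{n-2}(\Omega)$ and the dimensional constant $\eta_n$ emerge, and to verify that Groemer's $L^2$ distance coincides with $\delta_2$ as defined in \eqref{s3. defL2}. I expect this matching of conventions --- rather than any new geometric input --- to be the main obstacle; once it is settled, the corollary is immediate from Theorem \ref{s1.thm2} and \cite[Theorem 5.3.1]{Groemer1996}.
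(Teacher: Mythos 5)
Your proposal is correct and takes essentially the same route as the paper: combine the exact identity of Theorem \ref{thm1} (equivalently \ref{s1.thm2}), note that with the Steiner point at the origin $B_0(r)=B_z(\Omega)$, and apply Groemer's stability theorem \cite[Theorem 5.3.1]{Groemer1996} to bound $\delta_2(\Omega,B_z(\Omega))^2$ by the isoperimetric deficit. The ``constant bookkeeping'' you flag is indeed the only non-immediate step, and the paper simply states Groemer's result in the already-translated form (with $\eta_n=\frac{n+2}{n|\mathbb{B}^{n+1}|^n}$ and the scale-invariant deficit $\Phi(\Omega)$), so once that formulation is accepted the corollary follows by rearranging and multiplying by $n+1$, exactly as you describe.
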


To summarize, we illustrate the range of inequalities and equations obtained from our approach by considering the case where $\Sigma$ is a strictly convex curve in $\mathbb{R}^2$. A summary of these results is provided in Table \ref{tab:inequalities} in Section \ref{sec upper bounds}.

The remainder of this paper is organized as follows. In Section \ref{sec prelim}, we introduce the necessary preliminaries, including relevant geometric notions and integral formulas that will be used throughout the paper. In Section \ref{sec. identity}, we derive an integral identity involving the support function and the $k$-th mean curvature, which serves as a foundation for our subsequent results. Section \ref{sec upper bounds} establishes sharp upper bounds for the deficit in the Alexandrov-Fenchel inequality, using a weighted Minkowski integral formula. In Section \ref{sec. weight}, we analyze the stability of a weighted geometric inequality, presenting an alternative proof via analysis of the support function on the sphere. Finally, in Section \ref{sec others}, we discuss additional inequalities obtained from our approach in other space forms or warped product manifolds under some natural assumptions.

\begin{ack}
The first author was supported by the UOW Early-Mid Career Researcher Enabling Grant and the UOW Advancement and Equity Grant Scheme for Research 2024. The second author was supported by National Key Research and Development Program of China 2021YFA1001800 and 2020YFA0713100, and the Fundamental Research Funds for the Central Universities.

Part of this research was completed during the authors' visit to the Australian National University and their participation in the MATRIX-MFO Tandem Workshop: Nonlinear Geometric Diffusion Equations at the MATRIX Institute in Australia. They would like to express their gratitude to Ben Andrews, Theodora Bourni, and Mat Langford for their kind invitations and acknowledge the support provided by ANU and the MATRIX Institute. They also thank Hojoo Lee for valuable discussions.
\end{ack}

\section{Preliminaries}\label{sec prelim}
Let $\left(\bar{M}^{n+1}, \bar{g}\right)$ be a warped product manifold defined as $I \times N^n$ with the metric
$$ \bar{g}=d r^2+\lambda^2(r) g_N, $$
where $\left(N^n, g_N\right)$ is a closed, smooth orientable Riemannian manifold, and $\lambda:I\to \mathbb R$ is a smooth function which is positive on the interior of $I$.
As examples, the space forms can be viewed as warped product manifolds $\bar{M}^{n+1}=I \times \mathbb{S}^n$ for an interval $I$:
\begin{equation}\label{eq space form}
\bar{M}^{n+1}=
\begin{cases}
\mathbb{R}^{n+1}, & \text { if } I=[0, \infty), \quad \lambda(r)=r \\
\mathbb{H}^{n+1}, & \text { if } I=[0, \infty), \quad \lambda(r)=\sinh (r) \\
\mathbb{S}^{n+1}, & \text { if } I=[0, \pi], \quad \lambda(r)=\sin (r).
\end{cases}
\end{equation}
We denote the simply connected space forms of curvature $K$ by $\bar M^{n+1}(K)$.

Warped product manifolds possess a conformal Killing vector field $X$ associated with the warping function $\lambda(r)$. Define the vector field $X=\lambda(r) \partial_r$. It satisfies the relation (see \cite[Lemma 2.2]{Brendle2013})
\begin{equation}\label{s2.alpha}
\mathcal{L}_X \bar{g}=2 \alpha \bar{g}, \text{ where }\alpha=\lambda^{\prime}(r)
\end{equation}
is the conformal factor, depending only on $r$. This conformal structure plays a fundamental role in deriving integral identities for hypersurfaces.
Let $\Sigma \subset \bar{M}^{n+1}$ be a closed, oriented hypersurface with unit normal vector field $\nu$. The normalized $k$-th mean curvature $H_k$ of $\Sigma$ is defined as the normalized symmetric polynomial of the principal curvatures $\left\{\kappa_1, \ldots, \kappa_n\right\}$:
$$
H_k=\frac{1}{\binom{n}{k}} \sigma_k\left(\kappa_1, \ldots, \kappa_n\right)
$$
where $\sigma_k$ is the $k$-th elementary symmetric polynomial. By convention, we also define
\begin{align*}
H_{-1}=\langle X, \nu\rangle.
\end{align*}
This convention is consistent with the Minkowski formula, which will be presented below.

For $k=1$, $H_1$ is the mean curvature, and for $k=n$, $H_n$ is the Gauss-Kronecker curvature. In addition to the mean curvatures, the $k$-th Newton tensor $T_k$ is defined recursively as
$$
T_0=\mathrm{Id}, \quad T_k=H_k \mathrm{Id}-A \circ T_{k-1}
$$
where $A$ is the shape operator of $\Sigma$ with respect to $\nu$. Locally, $\left(T_k\right)_j^i$ is given by
$$
\left(T_k\right)_j^i=\frac{1}{k!} \sum_{\substack{1 \le i_1, \ldots, i_k \le n \\
1 \le j_1, \cdots, j_k \le n}} \delta_{j j_1 \ldots j_k}^{i i_1 \ldots i_k} A_{i_1}^{j_1} \cdots A_{i_k}^{j_k}.
$$

\subsection{Weighted Minkowski formula}
We can now state the weighted Minkowski formula \cite[Proposition 1]{kwong2018weighted} for warped product manifolds, as follows:
\begin{proposition}
Let $f$ be a $C^1$ function on a closed hypersurface $\Sigma$ in $(\bar M, \bar g)$.
Then we have
\begin{equation}\label{weighted mink}
\int_{\Sigma} f\left(\alpha H_{k-1}-H_k\langle X, \nu\rangle\right)d\mu +\frac{1}{k\binom{n}{k}} \int_{\Sigma} f \mathrm{div}_{\Sigma}\left(T_{k-1}\right)(\xi)d\mu=-\frac{1}{k\binom{n}{k}} \int_{\Sigma}\left\langle T_{k-1}(\xi), \nabla_{\Sigma} f\right\rangle d\mu,
\end{equation}
where $\xi=X^T$ is the tangential component of $X$ on $\Sigma$, and $\mathrm{div}_{\Sigma}$ denotes the divergence operator on $\Sigma$.
\end{proposition}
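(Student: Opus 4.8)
\medskip

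The plan is to apply the divergence theorem on the closed manifold $\Sigma$ to the tangent vector field $f\,T_{k-1}(\xi)$. Since $\Sigma$ is closed, $\int_\Sigma \mathrm{div}_\Sigma\!\big(f\,T_{k-1}(\xi)\big)\,d\mu=0$, and the product rule $\mathrm{div}_\Sigma(fW)=f\,\mathrm{div}_\Sigma W+\langle\nabla_\Sigma f,W\rangle$ immediately produces the right-hand side of \eqref{weighted mink} from the $\langle\nabla_\Sigma f,\cdot\rangle$ term. Thus the whole statement reduces to the pointwise identity
\begin{equation*}
\mathrm{div}_\Sigma\!\big(T_{k-1}(\xi)\big)=k\binom{n}{k}\big(\alpha H_{k-1}-H_k\langle X,\nu\rangle\big)+\mathrm{div}_\Sigma(T_{k-1})(\xi),
\end{equation*}
so the task is to compute the left-hand side.

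The first --- and essentially only delicate --- step is to upgrade \eqref{s2.alpha}: the warped-product field $X=\lambda(r)\partial_r$ is in fact \emph{closed} conformal, i.e. $\bar\nabla_Y X=\alpha\,Y$ for \emph{every} vector $Y$, not merely $\mathcal{L}_X\bar g=2\alpha\bar g$. This follows by a direct computation with the Levi-Civita connection of $\bar g=dr^2+\lambda^2 g_N$. Granting this, write $\xi=X^T=X-\langle X,\nu\rangle\nu$, differentiate along a tangent vector $Y$, and use the Weingarten relation $\bar\nabla_Y\nu=A(Y)$ to take tangential components; one obtains
\begin{equation*}
\nabla_{\Sigma,Y}\,\xi=\alpha\,Y-\langle X,\nu\rangle\,A(Y),
\end{equation*}
so that $\nabla_\Sigma\xi$, as an endomorphism of $T\Sigma$, equals $\alpha\,\mathrm{Id}-\langle X,\nu\rangle\,A$ (in particular it is self-adjoint).

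It remains to expand the divergence. Since $T_{k-1}$ is self-adjoint, $\mathrm{div}_\Sigma\!\big(T_{k-1}(\xi)\big)=\langle\mathrm{div}_\Sigma T_{k-1},\xi\rangle+\mathrm{tr}\big(T_{k-1}\circ\nabla_\Sigma\xi\big)$, the first term being exactly $\mathrm{div}_\Sigma(T_{k-1})(\xi)$. Plugging in the formula for $\nabla_\Sigma\xi$ and invoking the standard Newton-tensor trace identities $\mathrm{tr}\,T_{k-1}=(n-k+1)\binom{n}{k-1}H_{k-1}=k\binom{n}{k}H_{k-1}$ and $\mathrm{tr}(T_{k-1}\circ A)=k\binom{n}{k}H_k$ gives
\begin{equation*}
\mathrm{tr}\big(T_{k-1}\circ\nabla_\Sigma\xi\big)=\alpha\,\mathrm{tr}\,T_{k-1}-\langle X,\nu\rangle\,\mathrm{tr}(T_{k-1}\circ A)=k\binom{n}{k}\big(\alpha H_{k-1}-H_k\langle X,\nu\rangle\big),
\end{equation*}
which is precisely the pointwise identity above. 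Multiplying by $f$, dividing by $k\binom{n}{k}$, integrating over $\Sigma$, and combining with the divergence-theorem computation of the first paragraph yields \eqref{weighted mink}.

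All the computations are routine once the closed-conformal property $\bar\nabla_Y X=\alpha Y$ is in hand; the one point requiring care is to fix the sign convention for $\nu$ (hence for $A$ and for $\langle X,\nu\rangle$) consistently with the convention $H_{-1}=\langle X,\nu\rangle$, so that the curvature term emerges as $\alpha H_{k-1}-H_k\langle X,\nu\rangle$ rather than with the opposite sign on the second summand.
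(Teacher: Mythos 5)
The paper does not prove this proposition; it is quoted verbatim from \cite{kwong2018weighted}, so there is no in-paper proof to compare against. Your argument is correct and is the standard derivation used there and in essentially all Minkowski-formula proofs: integrate $\mathrm{div}_\Sigma\bigl(f\,T_{k-1}(\xi)\bigr)=0$ over the closed $\Sigma$, apply the product rule, and reduce to the pointwise identity $\mathrm{tr}\bigl(T_{k-1}\circ\nabla_\Sigma\xi\bigr)=k\binom{n}{k}\bigl(\alpha H_{k-1}-H_k\langle X,\nu\rangle\bigr)$, which follows from $\nabla_\Sigma\xi=\alpha\,\mathrm{Id}-\langle X,\nu\rangle A$ and the Newton-tensor traces $\mathrm{tr}\,T_{k-1}=k\binom{n}{k}H_{k-1}$, $\mathrm{tr}(T_{k-1}\circ A)=k\binom{n}{k}H_k$.

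One small point worth flagging: you are right that equation \eqref{s2.alpha} as stated ($\mathcal L_X\bar g=2\alpha\bar g$) only records the conformal Killing condition, and that the computation needs the stronger \emph{closed} conformal property $\bar\nabla_Y X=\alpha Y$. The cited source \cite[Lemma 2.2]{Brendle2013} in fact proves exactly this stronger statement for $X=\lambda(r)\partial_r$, so the upgrade is available but implicit in the paper's notation. (Strictly speaking the trace $\mathrm{tr}(T_{k-1}\circ\nabla_\Sigma\xi)$ only sees the symmetric part of $\nabla_\Sigma\xi$ since $T_{k-1}$ is symmetric, so even the weaker conformal Killing condition would suffice here; but invoking the closed conformal property is cleaner and matches what Brendle actually establishes.) Your remark about sign conventions is harmless but not actually a concern: replacing $\nu$ by $-\nu$ multiplies $T_{k-1}$, $H_{k-1}$, and $\alpha H_{k-1}-H_k\langle X,\nu\rangle$ all by $(-1)^{k-1}$, so \eqref{weighted mink} is invariant.
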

In particular, it is well-known that the Newton tensor $T_k$ is divergence-free in space forms or when $k=1$ and $\bar M$ is Einstein, due to the Codazzi equation. Consequently, the weighted Minkowski formula \eqref{weighted mink} simplifies to:
\begin{equation}\label{weighted mink space form}
\int_{\Sigma} f\left(\alpha H_{k-1}-H_k\langle X, \nu\rangle\right)d\mu =-\frac{1}{k\binom{n}{k}} \int_{\Sigma}\left\langle T_{k-1}(\xi), \nabla_{\Sigma} f\right\rangle d\mu
\end{equation}
if $\bar M$ is $\mathbb R^{n+1}$, $\mathbb H^{n+1}$, $\mathbb S^{n+1}$, or when $k=1$ and $\bar M$ is Einstein.

When $f=1$, it reduces to the classical Minkowski (or Hsiung-Minkowski) formula (\cite{Hs54,MR91}):
\begin{equation}\label{eq mink}
\int_{\Sigma} \alpha H_{k-1}d\mu= \int_{\Sigma} H_k\langle X, \nu\rangle d \mu.
\end{equation}

\subsection{Jensen deficit}
We will apply the weighted Minkowski formula \eqref{weighted mink} to relate the difference between two weighted curvature integrals, using a weight of the form $f=f(u)$, where $u=\langle X, \nu\rangle$ denotes the (generalized) support function. Subsequently, we will analyze the deficit in Jensen's inequality and then apply the Minkowski formulas to obtain a formula for $\varphi\left(\frac{I_{k-1}}{I_k}\right) - \varphi\left(\frac{I_{k-2}}{I_{k-1}}\right)$ in $\mathbb R^{n+1}$.

For this purpose, we recall the following basic formula for the deficit in Jensen's inequality, which is derived from the integral form of the remainder in Taylor's theorem.
\begin{lemma}\label{lem jensen}
Let $(\Omega, m)$ be a probability space, and let $u: \Omega \rightarrow \mathbb{R}$ be a bounded measurable function with $I=$ Range $(u)$, assumed to be an interval. Define $\bar{u}=\int_{\Omega} u d m$, and let $\psi: I \rightarrow \mathbb{R}$ be a $C^2$ function. Then
\begin{equation}\label{jensen deficit}
\int_{\Omega} \psi (u) \, d m-\psi(\bar{u})=\int_{\Omega} \int_{\bar{u}}^{u(x)} \psi^{\prime \prime}(t)(u(x)-t) d t d m(x).
\end{equation}
In particular,
\begin{align*}
\frac{1}{2} \inf_I \psi^{\prime \prime} \int_{\Omega}(u(x)-\bar{u})^2 d m \le & \int_{\Omega} \psi(u) d m-\psi\left(\int_{\Omega} u d m\right) \\
\le & \frac{1}{2} \sup_I \psi^{\prime \prime} \int_{\Omega}(u(x)-\bar{u})^2 d m.
\end{align*}
\end{lemma}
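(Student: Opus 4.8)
The plan is to obtain the identity pointwise in $x$ from Taylor's theorem with the integral form of the remainder, and then integrate against $m$. Fix $x \in \Omega$. Since $I = \mathrm{Range}(u)$ is an interval, it is in particular convex, so the average $\bar u = \int_\Omega u\, dm$ lies in $I$ and $\psi,\psi'$ are defined at $\bar u$. Applying the first-order Taylor expansion of the $C^2$ function $\psi$ about the point $\bar u$, evaluated at $u(x)\in I$, gives
\[
\psi(u(x)) = \psi(\bar u) + \psi'(\bar u)\bigl(u(x) - \bar u\bigr) + \int_{\bar u}^{u(x)} \psi''(t)\bigl(u(x) - t\bigr)\, dt,
\]
which is just the fundamental theorem of calculus applied twice (equivalently, an integration by parts in the remainder term). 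This holds for every $x$, and both sides are bounded measurable functions of $x$, since the remainder integral is a continuous — hence bounded — function of $u(x)$ as $u(x)$ ranges over the bounded set $I$.

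Next I would integrate this identity over $\Omega$ with respect to the probability measure $m$. The left side becomes $\int_\Omega \psi(u)\, dm$. On the right, $\int_\Omega \psi(\bar u)\, dm = \psi(\bar u)$ because $m$ is a probability measure, and $\int_\Omega \psi'(\bar u)\bigl(u(x) - \bar u\bigr)\, dm = \psi'(\bar u)\bigl(\bar u - \bar u\bigr) = 0$ by the definition of $\bar u$. The remaining term is exactly $\int_\Omega \int_{\bar u}^{u(x)} \psi''(t)(u(x)-t)\, dt\, dm(x)$, which yields \eqref{jensen deficit}.

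For the two-sided estimate I would bound the inner integral for each fixed $x$. When $u(x) \ge \bar u$, the factor $u(x)-t$ is nonnegative for $t\in[\bar u, u(x)]$, so replacing $\psi''(t)$ by $\inf_I \psi''$ or $\sup_I \psi''$ and using $\int_{\bar u}^{u(x)}(u(x)-t)\, dt = \tfrac12 (u(x)-\bar u)^2$ gives
\[
\tfrac12 \inf_I \psi''\,(u(x)-\bar u)^2 \le \int_{\bar u}^{u(x)} \psi''(t)(u(x)-t)\, dt \le \tfrac12 \sup_I \psi''\,(u(x)-\bar u)^2.
\]
When $u(x) < \bar u$, rewriting $\int_{\bar u}^{u(x)} \psi''(t)(u(x)-t)\, dt = \int_{u(x)}^{\bar u} \psi''(t)(t-u(x))\, dt$ reduces to the same situation with the nonnegative weight $t - u(x)$ on $[u(x),\bar u]$, producing the identical bounds in terms of $(u(x)-\bar u)^2$; the case $u(x)=\bar u$ is trivial. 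Integrating these pointwise bounds over $\Omega$ gives the asserted inequalities.

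I do not anticipate any genuine obstacle: the only points deserving a word of care are that $\bar u \in I$ (which uses that $I$ is an interval, hence convex) so that the Taylor expansion about $\bar u$ is legitimate, and the integrability of the remainder integral (immediate from the boundedness of $u$ and continuity of $\psi''$). Both are routine.
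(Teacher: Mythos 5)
Your argument is correct and is exactly the proof given in the paper: Taylor's theorem with the integral form of the remainder expanded about $\bar u$, then integrate over $\Omega$ using that $u - \bar u$ has mean zero; the two-sided estimate follows by bounding $\psi''$ inside the inner integral. The only additions you make are routine verifications (that $\bar u \in I$, boundedness/measurability, and the case $u(x) < \bar u$) which the paper omits.
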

\begin{proof}
For reader's convenience, we include the proof here.
By Taylor's theorem with the integral form of the remainder,
$$
\psi(u(x))=\psi(\bar{u})+\psi^{\prime}(\bar{u})(u(x)-\bar{u})+\int_{\bar{u}}^{u(x)} \psi^{\prime \prime}(t)(u(x)-t) d t.
$$
Integrating both sides with respect to $m$, and noting that $u-\bar u$ has mean $0$, we can get the result.
\end{proof}

\section{An integral identity involving the support function and the $k$-th mean curvature}\label{sec. identity}

In this section, we derive a general integral identity involving support function and the $k$th mean curvature. For a smooth closed hypersurface $\Sigma=\partial \Omega$ in $\bar M^{n+1}$, besides the quermassintegral $I_k$ we define the following weighted curvature integral
\begin{align*}
J_k:=\int_\Sigma \alpha H_k d\mu,
\end{align*}
where the weight $\alpha$ is defined in \eqref{s2.alpha}. In particular, when $\bar M^{n+1}=\mathbb R^{n+1}$, then $J_k=I_k$ and $I_{-1}=(n+1)|\Omega|$. When $\bar M^{n+1}=\mathbb H^{n+1}$ or $\mathbb S^{n+1}$, inequalities involving the weighted curvature integrals $J_k$ are considered in \cite{Brendle2016, deLima2016, Ge2015, Girao2019, HL22, HLW22, SX19, Xia2014} etc.

To quantify the deviation of $\Omega$ from a reference ball centered at the origin, we define the following weighted $L^2$ distance.
\begin{definition}\label{s3. def1}
Let $\Omega$ be a domain in $\bar M^{n+1}$ with a smooth boundary $\Sigma = \partial \Omega$, where the $k$-th mean curvature $H_k \ge 0$. Let $B_0(r)$ denote the ball of radius $r$ centered at the origin in $\bar M^{n+1}$. We define the weighted $L^2$ distance $\delta_{2, k}$ between $\Omega$ and $B_0(r)$ as
$$\delta_{2, k} (\Omega, B_0(r)):= \left(\int_\Sigma \left(u - r\right)^2 H_k \, d\mu\right)^{\frac{1}{2}}, $$
where $u=\langle X, \nu\rangle$ denotes the (generalized) support function of $\Sigma$.

If $\bar M=\mathbb R^{n+1}$, the average of $u$ with respect to the measure of $\Sigma$ is given by $\bar{u} = \frac{\int_\Sigma u \, d\mu}{\int_\Sigma d\mu} = \frac{(n+1)|\Omega|}{|\Sigma|}$. Thus, when $k=0$, the variance of $u$ is equal to the squared distance $\delta_{2,0}^2$ between $\Omega$ and the ball centered at $0$ with radius $\frac{(n+1)|\Omega|}{|\Sigma|}$, up to a constant:
$$\mathrm{Var}(u) = \frac{1}{|\Sigma|} \int_\Sigma \left(u - \bar{u}\right)^2 \, d\mu = \frac{1}{|\Sigma|} \delta_{2, 0} \left(\Omega, B_0\left(\frac{(n+1)|\Omega|}{|\Sigma|}\right)\right)^2. $$

Let us also recall the $L^2$ distance between two convex bodies in $\mathbb R^{n+1}$. If $K_1$ and $K_2$ are convex bodies in $\mathbb{R}^{n+1}$ with support functions $u_{K_1}$ and $u_{K_2}$, respectively, regarded as functions defined on $\mathbb{S}^n$, then their $L^2$ distance is defined as follows (see \cite[(7.120)]{Schneider2014}):
\begin{equation}\label{s3. defL2}
\delta_2(K_1, K_2) = \left(\int_{\mathbb{S}^n} \left(u_{K_1}(\xi) - u_{K_2}(\xi) \right)^2 \, d\theta \right)^{\frac{1}{2}} = \left\| u_{K_1} - u_{K_2} \right\|_{L^2 (\mathbb{S}^n)}.
\end{equation}

For a convex hypersurface in $\mathbb R^{n+1}$, since the spherical measure pushed forward by the inverse Gauss map satisfies $d\theta = H_n \, d\mu$, the weighted $L^2$ distance $\delta_{2, n}$ is the $L^2$ distance between $\Omega$ and a ball:
$$\delta_{2, n} (\Omega, B_0(r)) = \delta_2(\Omega, B_0(r)). $$
Recall also that if $K, L$ are two convex bodies in $\mathbb R^{n+1}$, the Hausdorff distance $\delta(K, L)$ between $K$ and $L$ is defined by (\cite[(1.60)]{Schneider2014})
$$
\delta(K, L)=\inf \left\{r: K \subset L_{(r)}, L \subset K_{(r)}\right\}.
$$
Here, $K_{(r)}=K+r \mathbb{B}^{n+1}$ is the parallel body of $K$ at distance $r$. It is easily seen that $\delta(K, L)$ can also be defined in terms of the respective support functions of the two bodies by (\cite[Lemma 1.8.14]{Schneider2014})
$$
\delta(K, L)=\sup \left\{\left|u_K(\xi)-u_L(\xi)\right|: \xi \in \mathbb{S}^n\right\}=\left\|u_K-u_L\right\|_{\infty}.
$$
\end{definition}

In the following result, we prove a relationship between a curvature integral weighted by an arbitrary function $\varphi$ of the support function $u$ and the curvature integrals $I_k$ and $J_k$. By further imposing convexity or concavity conditions on $\varphi$, we can obtain more elegant inequalities \eqref{ineq supp1}, \eqref{ineq supp2}. Integral formulas involving $H_k$ and specific functions of $u$ have been considered by, for example, \cite{Alencar1998}, where such formulas are used to prove Minkowski-type theorems.
\begin{theorem}\label{thm weighted supp}
Let $\Sigma$ be a smooth, closed hypersurface in the space form $\bar M^{n+1}(K)$ with $H_k \ge 0$, $\int_{\Sigma} H_k \, d\mu > 0$, and let $\varphi$ be a $C^2$ function on $\mathrm{Range}(u)$. Then the following holds:
\begin{equation}\label{eq identity}
\int_{\Sigma} H_k \varphi(u) \, d\mu =I_k \varphi\left(\frac{J_{k-1}}{I_k}\right) +\int_{\Sigma}H_k(x)\int_{\frac{J_{k-1}}{I_k}}^{u(x)} \varphi^{\prime \prime}(t)(u(x)-t) d t \, d \mu(x).
\end{equation}
In particular:
\begin{enumerate}
\item
We have
\begin{align}\label{ineq hk min max}
I_k \varphi\left(\frac{J_{k-1}}{I_k}\right)+\frac{1}{2}\left(\min_I \varphi^{\prime \prime}\right) \delta_{2, k}\left(\Omega, B_0(\bar{u})\right)^2
\le \int_{\Sigma} H_k \varphi(u) d \mu \nonumber\\
\le I_k \varphi\left(\frac{J_{k-1}}{I_k}\right)+\frac{1}{2}\left(\max_I \varphi^{\prime \prime}\right) \delta_{2, k}\left(\Omega, B_0(\bar{u})\right)^2,
\end{align}
where $I=\mathrm{Range}(u)$ and $\bar u= \frac{J_{k-1}}{I_k}$.
\item
If $\varphi$ is convex, then
\begin{equation}\label{ineq supp1}
\int_{\Sigma} H_k \varphi(u) \, d\mu \ge I_k \varphi\left(\frac{J_{k-1}}{I_k}\right).
\end{equation}
If $H_k > 0$ and $\varphi$ is strictly convex, then equality holds if and only if $\Sigma$ is a geodesic sphere centered at $0$.
\item
If $\varphi$ is concave, then
\begin{equation}\label{ineq supp2}
\int_{\Sigma} H_k \varphi(u) \, d\mu \le I_k \varphi\left(\frac{J_{k-1}}{I_k}\right).
\end{equation}
If $H_k > 0$ and $\varphi$ is strictly concave, then equality holds if and only if $\Sigma$ is a geodesic sphere centered at $0$.
\end{enumerate}
\end{theorem}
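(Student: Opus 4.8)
The plan is to read the left-hand side of \eqref{eq identity} as an average against the probability measure $dm:=\tfrac{1}{I_k}H_k\,d\mu$, which is legitimate precisely because $H_k\ge 0$ and $I_k=\int_\Sigma H_k\,d\mu>0$, and then feed this into the Jensen deficit identity of Lemma \ref{lem jensen} with $\psi=\varphi$. The single geometric ingredient needed is the classical Hsiung–Minkowski formula \eqref{eq mink}: it identifies the $dm$-barycenter of the support function $u=\langle X,\nu\rangle$ as
\[
\bar u:=\int_\Sigma u\,dm=\frac{1}{I_k}\int_\Sigma \langle X,\nu\rangle H_k\,d\mu=\frac{1}{I_k}\int_\Sigma \alpha H_{k-1}\,d\mu=\frac{J_{k-1}}{I_k}
\]
(we are in the range $k\ge 1$). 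Since $\Sigma$ is compact, $\mathrm{Range}(u)$ lies in a compact interval $I$ on which $\varphi$ is $C^2$ (here it is harmless to assume $\Sigma$ connected, so that $\mathrm{Range}(u)$ is itself a compact interval), so Lemma \ref{lem jensen} applies to $(\Sigma,dm)$ verbatim. Multiplying \eqref{jensen deficit} through by $I_k$ and substituting $\bar u=J_{k-1}/I_k$ will give \eqref{eq identity} directly.

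For the three consequences I would argue as follows. Part (1) comes from the ``in particular'' clause of Lemma \ref{lem jensen} applied to $(\Sigma,dm)$: it bounds the Jensen deficit above and below by $\tfrac12(\max_I\varphi'')$, respectively $\tfrac12(\min_I\varphi'')$, times $\int_\Sigma(u-\bar u)^2\,dm$ (the extrema are attained because $\varphi''$ is continuous on the compact interval $I$); multiplying by $I_k$ and recognising $\int_\Sigma(u-\bar u)^2H_k\,d\mu=\delta_{2,k}\!\left(\Omega,B_0(\bar u)\right)^2$ from Definition \ref{s3. def1} yields \eqref{ineq hk min max}. Part (2) is then immediate: if $\varphi$ is convex then $\min_I\varphi''\ge 0$, so the lower estimate in \eqref{ineq hk min max} becomes \eqref{ineq supp1} — equivalently, the remainder term in \eqref{eq identity} is pointwise nonnegative, since $\varphi''\ge 0$ and $\int_{\bar u}^{u}\varphi''(t)(u-t)\,dt$ carries the sign of $\varphi''$. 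Part (3) follows by applying (2) to $-\varphi$.

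The substantive point, and the step I expect to be the main obstacle, is the equality characterization. Assuming $H_k>0$, $\varphi$ strictly convex, and equality in \eqref{ineq supp1}, the remainder integrand $H_k(x)\int_{\bar u}^{u(x)}\varphi''(t)(u(x)-t)\,dt$ in \eqref{eq identity} is continuous, nonnegative, and has zero integral, hence vanishes identically; with $H_k>0$ and $\varphi''>0$ this forces $u\equiv\bar u$, i.e.\ $\langle X,\nu\rangle$ is constant on $\Sigma$. To upgrade this to ``$\Sigma$ is a geodesic sphere centered at $0$'', I would run a maximum-principle argument on $|X^T|^2$: writing $X=X^T+u\nu$ and using $|X|^2=\lambda(r)^2$ gives $|X^T|^2=\lambda(r)^2-u^2$, so constancy of $u$ yields $\nabla_\Sigma|X^T|^2=\nabla_\Sigma\!\left(\lambda(r)^2\right)=\big(\bar\nabla(\lambda(r)^2)\big)^{T}=2\alpha X^T$, where we used $\bar\nabla r=\partial_r$ and $X=\lambda\partial_r$, so that $\bar\nabla(\lambda^2)=2\lambda\lambda'\partial_r=2\alpha X$. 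At a point where $|X^T|^2$ attains its maximum on the closed manifold $\Sigma$ we get $\alpha X^T=0$; since $\alpha=\lambda'>0$ on $\Sigma$ for $\bar M=\mathbb R^{n+1}$ and $\mathbb H^{n+1}$ (and likewise for $\mathbb S^{n+1}$ under the standard assumption that $\Sigma$ lies in an open hemisphere), this forces $X^T\equiv 0$, whence $\lambda(r)=|X|=|u|$ is constant, $r$ is constant, and $\Sigma$ is the corresponding geodesic sphere about $0$. The converse is clear, since on such a sphere $\nu=\partial_r$, so $u=\lambda(r)$ is constant and Jensen's inequality degenerates to an equality; the concave case (3) is entirely symmetric, using $-\varphi$.
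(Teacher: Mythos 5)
Your proof is correct and follows essentially the same route as the paper's: apply Lemma~\ref{lem jensen} to the probability measure $H_k\,d\mu/I_k$, identify the barycenter $\bar u=J_{k-1}/I_k$ via the Hsiung--Minkowski formula \eqref{eq mink}, and read off parts (1)--(3) from the Jensen-deficit bounds. The one place you add genuine content is the equality characterization: the paper simply asserts that constancy of $u$ forces $\Sigma$ to be a geodesic sphere, whereas your maximum-principle argument on $|X^T|^2=\lambda(r)^2-u^2$ (using $\nabla_\Sigma(\lambda^2)=2\alpha X^T$) supplies the missing step, and you correctly flag the implicit hemisphere hypothesis needed to ensure $\alpha>0$ when $K=1$.
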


\begin{proof}
By the Minkowski formula \eqref{eq mink} and Lemma \ref{lem jensen} applied to the probability measure $\frac{H_k d\mu}{I_k}$, we have
\begin{align*}
\frac{\int_{\Sigma} H_k \varphi(u) d \mu}{I_k} =& \varphi\left(\frac{\int_{\Sigma} H_k u d \mu}{I_k}\right) +
\frac{1}{I_k}\int_{\Sigma}H_k(x) \int_{\frac{\int_{\Sigma} H_k u d \mu}{I_k}}^{u(x)} \varphi^{\prime \prime}(t)(u(x)-t) d t d \mu(x)
\\
=& \varphi\left(\frac{J_{k-1}}{I_k}\right)+\frac{1}{I_k} \int_{\Sigma} H_k(x)\int_{\frac{J_{k-1}}{I_k}}^{u(x)} \varphi^{\prime \prime}(t)(u(x)-t) d t d \mu(x).
\end{align*}
From this \eqref{eq identity} follows.

It also follows that for $I=\mathrm{Range}(u)$ and $\bar u= \frac{J_{k-1}}{I_k}$ is the average of $u$ with respect to $\frac{H_k d \mu}{I_k}$,
\begin{align*}
I_k\varphi\left(\frac{J_{k-1}}{I_k}\right)+\frac{1}{2} \min_I \varphi^{\prime \prime} \int_{\Sigma}H_k (u-\bar{u})^2 d \mu
\le \int_{\Sigma} H_k \varphi(u) d \mu\\
\le I_k\varphi\left(\frac{J_{k-1}}{I_k}\right)+\frac{1}{2} \max_I \varphi^{\prime \prime} \int_{\Sigma}H_k (u-\bar{u})^2 d \mu,
\end{align*}
which is equivalent to \eqref{ineq hk min max}.

In particular, if $\varphi$ is convex, then
\begin{equation}\label{ineq convex}
\int_{\Sigma} H_k \varphi(u) d \mu \ge I_k \varphi\left(\frac{J_{k-1}}{I_k}\right).
\end{equation}
Furthermore, if $H_k > 0$ and $\varphi$ is strictly convex and the equality in \eqref{ineq convex} holds, then $u$ must be constant by the equality case of Jensen's inequality. It follows that $\Sigma$ is a sphere centered at $0$. The case where $\varphi$ is concave is similar.
\end{proof}

As an immediate consequence of Theorem \ref{thm weighted supp}, we obtain a sharp lower bound on the circumradius of a $k$-convex domain, without requiring the star-shapedness condition. This result can be compared with \cite[Theorem 1.3]{WU24}, where an additional assumption of star-shapedness is required.
\begin{corollary}
Let $\Sigma=\partial \Omega$ for a smooth domain $\Omega \subset \bar{M}^{n+1}(K)$ (assume also $\Omega \subset \mathbb{S}^{n+1}_+$, the hemisphere, if $K=1$), with the origin chosen to be the circumcenter of $\Omega$. Let $\rho$ be the circumradius of $\Omega$. If $H_k > 0$ on $\Sigma$, then
\begin{equation}\label{s3.cir}
    \lambda(\rho) \ge \frac{J_{k-1}}{I_k},
\end{equation}
where $\lambda$ is the function defined in \eqref{eq space form}. Equality holds in \eqref{s3.cir} if and only if $\Sigma$ is a geodesic sphere centered at $0$.
\end{corollary}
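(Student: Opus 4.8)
The plan is to read the bound off from a pointwise Cauchy--Schwarz estimate on the generalized support function, once one recognizes $J_{k-1}/I_k$ as a weighted average of $u$.

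First I would record that $J_{k-1}/I_k$ is literally the mean of $u$. Applying Theorem \ref{thm weighted supp} (or just the Minkowski formula \eqref{eq mink}) with the affine choice $\varphi(t)=t$, for which the Taylor remainder in \eqref{eq identity} vanishes, gives $\int_\Sigma H_k u\,d\mu=J_{k-1}$, hence
\[
\frac{J_{k-1}}{I_k}=\int_\Sigma u\,\frac{H_k\,d\mu}{I_k}=\bar u ,
\]
the average of $u$ against the probability measure $H_k\,d\mu/I_k$ (legitimate since $H_k\ge 0$ and $I_k=\int_\Sigma H_k\,d\mu>0$); in particular $\bar u\le\sup_\Sigma u$.

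Next I would bound $u$ pointwise. Since $X=\lambda(r)\partial_r$, with $\partial_r$ and $\nu$ both $\bar g$-unit vectors and $\lambda\ge 0$ on $I$, Cauchy--Schwarz gives $u=\lambda(r)\langle\partial_r,\nu\rangle\le\lambda(r)$ at every point of $\Sigma$. Because $0$ is the circumcenter and $\rho$ the circumradius, $\overline\Omega\subset\overline{B_0(\rho)}$, so $r\le\rho$ on $\Sigma$; and $\lambda(r)$, being $r$, $\sinh r$, or $\sin r$, is nondecreasing on $[0,\rho]$ --- this is precisely where the hypothesis $\Omega\subset\mathbb S^{n+1}_+$ is used when $K=1$, forcing $\rho\le\pi/2$. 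Hence $u\le\lambda(r)\le\lambda(\rho)$ on $\Sigma$, and averaging against $H_k\,d\mu/I_k$ yields $J_{k-1}/I_k=\bar u\le\lambda(\rho)$, which is \eqref{s3.cir}.

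For the equality case: if \eqref{s3.cir} holds with equality then $u\equiv\lambda(\rho)$ on $\Sigma$ (using $H_k>0$), which by the two estimates above forces $\lambda(r)\equiv\lambda(\rho)$, hence $r\equiv\rho$ by the strict monotonicity of $\lambda$ on $[0,\rho]$, and $\langle\partial_r,\nu\rangle\equiv1$; thus $\Sigma$ is contained in --- and, being a closed embedded hypersurface, coincides with --- the geodesic sphere $\{r=\rho\}$ centered at $0$. The converse is immediate, since for such a sphere $u\equiv\lambda(\rho)$ and $J_{k-1}=\lambda(\rho)I_k$. I do not anticipate a serious obstacle here; the only delicate points are geometric bookkeeping --- verifying the monotonicity range of $\lambda$ and the role of the hemisphere assumption, and justifying $\overline\Omega\subset\overline{B_0(\rho)}$ together with the Cauchy--Schwarz step \emph{without} any convexity or star-shapedness, which is exactly what lets one dispense with the star-shapedness required in \cite[Theorem 1.3]{WU24}.
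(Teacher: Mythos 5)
Your argument is correct, and it is both more direct and more elementary than the paper's. Both proofs hinge on the same geometric input --- that under the circumcenter normalization the support function satisfies $u\le\lambda(\rho)$ pointwise --- but they exploit it differently. The paper feeds $\varphi(u)=u^{2m}$ into the convexity inequality \eqref{ineq supp1} from Theorem \ref{thm weighted supp} to get $\int_\Sigma H_k u^{2m}\,d\mu\ge J_{k-1}^{2m}/I_k^{2m-1}$, then takes $1/(2m)$-th roots and sends $m\to\infty$ to recover $\max_\Sigma|u|\ge J_{k-1}/I_k$, with the equality case run through that chain of inequalities for a fixed $m$. You bypass Theorem \ref{thm weighted supp} altogether: from the Minkowski formula \eqref{eq mink} (equivalently the affine case of \eqref{eq identity}), $J_{k-1}/I_k$ is the mean of $u$ against the probability measure $H_k\,d\mu/I_k$, and the one-line pointwise estimate $u=\lambda(r)\langle\partial_r,\nu\rangle\le\lambda(r)\le\lambda(\rho)$ --- Cauchy--Schwarz plus monotonicity of $\lambda$ on $[0,\rho]$, which for $K=1$ uses $\Omega\subset\mathbb S^{n+1}_+$ to force $\rho\le\pi/2$ --- gives the inequality by averaging. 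Your equality analysis is also tighter and more geometric: $u\equiv\lambda(\rho)$ forces $\langle\partial_r,\nu\rangle\equiv1$ and $r\equiv\rho$ simultaneously, so $\Sigma$ is the geodesic sphere. What the paper's route buys is consistency with the $\varphi$-machinery it has just built; your route shows the corollary needs only the Minkowski formula and an elementary pointwise estimate, which makes the independence from star-shapedness entirely transparent. There is no gap.
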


\begin{proof}
As $0$ is the circumcenter of $\Omega$, we have $\max_{\Sigma} |u| = \lambda(\rho)$.
By applying \eqref{ineq supp1} with $\varphi(u)=u^{2m}$ for $m\ge 1$,
\begin{equation}\label{s3.cir-pf1}
    \int_{\Sigma} H_k u^{2m} d\mu \ge \frac{{J_{k-1}}^{2m}}{{I_k}^{2m-1}}.
\end{equation}
Taking the $\tfrac{1}{2m}$-th power of both sides and letting $m\to\infty$ gives the inequality \eqref{s3.cir}.

If equality holds in \eqref{s3.cir}, then
\begin{align*}
    \left(\frac{J_{k-1}}{I_k}\right)^{2m}I_k=\lambda(\rho)^{2m}I_k
    \geq \int_{\Sigma} H_k u^{2m} d\mu \ge \frac{{J_{k-1}}^{2m}}{{I_k}^{2m-1}}.
\end{align*}
This implies that equality holds in \eqref{s3.cir-pf1}, and thus $\Sigma$ is a geodesic sphere centered at $0$.
\end{proof}

\begin{remark}
When $\varphi (x) \equiv 1$, \eqref{eq identity} is trivial. When $\varphi (x) = x$, \eqref{eq identity} reduces to the Minkowski formula. For $\varphi (x) = x^2$ and $\bar M=\mathbb R^{n+1}$, then  \eqref{eq identity} becomes
\begin{align}\label{Hk u^2}
\int_\Sigma H_k u^2d\mu= & \frac{\left(\int_\Sigma H_{k-1} d\mu\right)^2}{\int_\Sigma H_kd\mu}+\delta_{2, k-1}\left(\Omega, B_0\left(\frac{I_{k-1}}{I_k}\right)\right)^2\nonumber\\
\ge & \frac{\left(\int_{\Sigma} H_{k-1} d \mu\right)^2}{\int_{\Sigma} H_k d \mu}.
\end{align}
Especially, when $k=1$, this reduces to
\begin{equation}\label{k=1}
\begin{split}
\int_\Sigma H_1u^2d\mu= \frac{|\Sigma|^2}{\int_\Sigma H_1d\mu}+\delta_{2, 0}\left(\Omega, B_0\left(\frac{(n+1)|\Omega|}{|\Sigma|}\right)\right)^2
\ge \frac{|\Sigma|^2}{\int_\Sigma H_1d\mu}.
\end{split}
\end{equation}
This result can be compared to the inequalities proved in \cite[Theorem 1.6]{girao2020weighted} and \cite[Theorem 2]{kwong2014new} (in the star-shaped mean-convex case), which assert, respectively:

\begin{equation}\label{girao}
\int_{\Sigma} H_1 r^2 d\mu \ge \omega_{n} \Bigl(\frac{|\Sigma|}{\omega_{n}}\Bigr)^{\tfrac{n+1}{n}}
\end{equation}
and
\begin{equation}\label{km}
\int_{\Sigma} H_1 r^2 d\mu \ge (n+1) |\Omega|.
\end{equation}

On the one hand, the left-hand side of \eqref{k=1} is not larger than $\int_{\Sigma} H_1 r^2 \, d\mu$. On the other hand, in the star-shaped mean-convex case, the Minkowski's inequality \cite{GL09}
$$\int_{\Sigma} H_1 d \mu \ge \omega_{n}^{\frac{1}{n}}|\Sigma|^{\frac{n-1}{n}}$$ implies that the right-hand side of \eqref{k=1} is also not larger than that of \eqref{girao}. However, it should be noted that our result does not require star-shapedness. Additionally, the RHS of \eqref{k=1} is not smaller than that of \eqref{km} in the convex case by another Alexandrov-Fenchel inequality:
\begin{equation}\label{ineq af}
|\Sigma|^2 \ge \left(n+1\right) \left(\int_{\Sigma} H_1 \, d\mu\right) |\Omega|.
\end{equation}
Therefore, we expect that the inequality $\int_{\Sigma} H_1 u^2 d \mu\ge \frac{|\Sigma|^2}{\int_{\Sigma} H_1 d \mu}$ in \eqref{k=1} is better than \eqref{km} in the mean convex case. However, to the best of our knowledge, the inequality \eqref{ineq af} remains unresolved in the mean-convex setting.

It is also interesting to compare \eqref{Hk u^2} to the following inequality by Kwong and Miao \cite{KM15}, who proved that for $k \in \{2, \ldots, n\}$, if $\Sigma \subset \mathbb{R}^{n+1}$ satisfies $H_k > 0$ on $\Sigma$,
\begin{equation}\label{km2}
\int_{\Sigma}H_k r^2 d\mu \ge \int_{\Sigma} H_{k-2} d\mu.
\end{equation}
This raises a natural question: does the inequality
$$\left(\int_\Sigma H_{k-1} d\mu\right)^2 \ge \int_\Sigma H_{k-2} d\mu \int_\Sigma H_k d\mu$$
hold for $k$-convex hypersurfaces? If this is true, then the inequality \eqref{Hk u^2} is stronger than \eqref{km2}, as the left-hand side of \eqref{Hk u^2} is smaller, while the right-hand side is larger. In the convex case, this inequality indeed holds, by the Alexandrov–Fenchel inequality for convex hypersurfaces (see Theorem A)
\end{remark}

\section{Sharp upper bounds for the Alexandrov-Fenchel deficit}\label{sec upper bounds}
In this section, inspired by the classical Alexandrov-Fenchel inequality \eqref{s1.AF1} in $\mathbb{R}^{n+1}$, we compare $\frac{I_{k-1}}{I_k}$ with $\frac{I_{k-2}}{I_{k-1}}$, or more generally, examine the difference $\varphi\left(\frac{I_{k-1}}{I_k}\right) - \varphi\left(\frac{I_{k-2}}{I_{k-1}}\right)$ for an arbitrary function $\varphi$. While the resulting formula may appear complex, we will demonstrate its utility by applying it to specific choices of $\varphi$, resulting in a number of simpler and more elegant inequalities.

\begin{proposition}\label{prop rn comp}
Let $\Sigma$ be a smooth, closed hypersurface in $\mathbb R^{n+1}$
with $H_k \ge 0$ and $H_{k-1} \ge 0$.
Let $\varphi$ be a $C^2$ function on $\mathrm{Range}(u)$ and define $\psi(t)=t\varphi(t)$. Then we have
\begin{align*}
& I_{k-1} \left(\varphi\left(\frac{I_{k-1}}{I_k}\right)- \varphi\left(\frac{I_{k-2}}{I_{k-1}}\right)\right)\\
& +\int_\Sigma \left(\int_{\frac{I_{k-1}}{I_k}}^{u(x)} H_k(x) \psi^{\prime \prime}(t)- \int_{\frac{I_{k-2}}{I_{k-1}}}^{u(x)} H_{k-1}(x) \varphi^{\prime \prime}(t) \right) (u(x)-t) d t d \mu\\
=& \frac{1}{k\binom{n}{k}} \int_{\Sigma} \varphi^{\prime}(u) T_{k-1} \circ A\left(X^T, X^T\right)d\mu.
\end{align*}
\end{proposition}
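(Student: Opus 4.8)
The plan is to read the asserted formula as the combination of three ingredients: the weighted Minkowski formula \eqref{weighted mink space form} with a suitably chosen weight, and two applications of the Jensen-deficit identity \eqref{eq identity} from Theorem \ref{thm weighted supp}.

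First I would apply \eqref{weighted mink space form} in $\mathbb R^{n+1}$ (so $\alpha\equiv 1$) with the weight $f=\varphi(u)$, where $u=\langle X,\nu\rangle$. Since $\nabla_\Sigma\varphi(u)=\varphi'(u)\nabla_\Sigma u$ and the Weingarten relation in $\mathbb R^{n+1}$ gives $\nabla_\Sigma u=\nabla_\Sigma\langle X,\nu\rangle=A(X^T)$, and since $T_{k-1}$ and $A$ are commuting self-adjoint endomorphisms, the right-hand side of \eqref{weighted mink space form} becomes $-\frac{1}{k\binom nk}\int_\Sigma\varphi'(u)\,T_{k-1}\circ A(X^T,X^T)\,d\mu$. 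On the left-hand side, writing $\psi(t)=t\varphi(t)$, the term $\int_\Sigma\varphi(u)H_k u\,d\mu$ is exactly $\int_\Sigma H_k\psi(u)\,d\mu$. Thus \eqref{weighted mink space form} rearranges to
\[
\int_\Sigma H_k\psi(u)\,d\mu-\int_\Sigma H_{k-1}\varphi(u)\,d\mu=\frac{1}{k\binom nk}\int_\Sigma\varphi'(u)\,T_{k-1}\circ A(X^T,X^T)\,d\mu .
\]
Next I would expand each integral on the left using \eqref{eq identity}. Applied with index $k$ to the function $\psi$ (recall $J_{k-1}=I_{k-1}$ in $\mathbb R^{n+1}$), it gives $\int_\Sigma H_k\psi(u)\,d\mu=I_k\,\psi\!\big(\tfrac{I_{k-1}}{I_k}\big)+\int_\Sigma H_k(x)\int_{I_{k-1}/I_k}^{u(x)}\psi''(t)(u(x)-t)\,dt\,d\mu(x)$, and the boundary term simplifies because $I_k\,\psi(\tfrac{I_{k-1}}{I_k})=I_{k-1}\,\varphi(\tfrac{I_{k-1}}{I_k})$. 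Applied with index $k-1$ to $\varphi$, it gives $\int_\Sigma H_{k-1}\varphi(u)\,d\mu=I_{k-1}\,\varphi\!\big(\tfrac{I_{k-2}}{I_{k-1}}\big)+\int_\Sigma H_{k-1}(x)\int_{I_{k-2}/I_{k-1}}^{u(x)}\varphi''(t)(u(x)-t)\,dt\,d\mu(x)$. Subtracting the second expansion from the first, substituting into the displayed identity, and moving the difference $I_{k-1}\big(\varphi(\tfrac{I_{k-1}}{I_k})-\varphi(\tfrac{I_{k-2}}{I_{k-1}})\big)$ together with the two remainder integrals to the left-hand side yields precisely the claimed formula.

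I do not expect a genuine obstacle here. The two points needing care are: (i) the identification $\nabla_\Sigma\langle X,\nu\rangle=A(X^T)$ and hence of the Minkowski right-hand side with a multiple of $\int_\Sigma\varphi'(u)\,T_{k-1}\circ A(X^T,X^T)\,d\mu$ — this is where the quadratic form $T_{k-1}\circ A(X^T,X^T)$ enters; and (ii) checking that Theorem \ref{thm weighted supp} may be invoked at both indices $k$ and $k-1$, which needs $H_k,H_{k-1}\ge 0$ together with $I_k,I_{k-1}>0$, the latter positivity being tacit already in the statement since the quotients $I_{k-1}/I_k$ and $I_{k-2}/I_{k-1}$ must be defined. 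The remaining bookkeeping in merging the three identities is purely algebraic.
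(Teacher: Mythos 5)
Your proof is correct and follows essentially the same route as the paper: apply the weighted Minkowski formula \eqref{weighted mink space form} with $f=\varphi(u)$ (using $\nabla_\Sigma u = A(X^T)$ to obtain the $T_{k-1}\circ A$ quadratic form), then expand $\int_\Sigma H_k\psi(u)\,d\mu$ and $\int_\Sigma H_{k-1}\varphi(u)\,d\mu$ via \eqref{eq identity} at indices $k$ and $k-1$ respectively, using $J_j=I_j$ in $\mathbb R^{n+1}$ and $I_k\psi(I_{k-1}/I_k)=I_{k-1}\varphi(I_{k-1}/I_k)$. Your remark on the tacit positivity of $I_k,I_{k-1}$ matches the paper's observation that the existence of an elliptic point guarantees these are positive.
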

\begin{proof}
Note that in $\mathbb{R}^{n+1}$ we have $\alpha=1$ and $J_k=I_k$.  We begin by applying the weighted Minkowski formula \eqref{weighted mink space form}, which gives
\begin{align}\label{eq 1}
\int_{\Sigma} H_{k-1}\varphi(u) d\mu +
\frac{1}{k\binom{n}{k}}\int_{\Sigma} \varphi^{\prime}(u) T_{k-1} \circ A\left(X^T, X^T\right)\nonumber\\
=\int_{\Sigma} H_k\varphi(u) u d\mu = \int_{\Sigma} \psi(u) H_k d\mu,
\end{align}
where we have used $\nabla_{\Sigma}\left(\varphi(u) \right)= \varphi^{\prime}(u) A\left(X^T\right)$.

Next, we apply Theorem \ref{thm weighted supp} to both sides of \eqref{eq 1}. For the right-hand side, we have
\begin{align}\label{R.H.S. }
\mathrm{RHS}
=&I_k\psi\left( \frac{I_{k-1}}{I_k}\right)+ \int_{\Sigma} \int_{\frac{I_{k-1}}{I_k}}^{u(x)} H_k(x) \psi^{\prime \prime}(t)(u(x)-t) d t d\mu\nonumber\\
= &I_{k-1} \varphi\left(\frac{I_{k-1}}{I_k}\right) + \int_{\Sigma} \int_{\frac{I_{k-1}}{I_k}}^{u(x)} H_k(x) \psi^{\prime \prime}(t)(u(x)-t) d t d\mu.
\end{align}
Note also that the existence of an elliptic point on $\Sigma$ ensures that $I_k$ and $I_{k-1}$ are positive. Indeed, the point $p$ in $\Sigma$ that is furthest from the origin has all principal curvatures positive with respect to the outward normal.

On the other hand, again by applying Theorem \ref{thm weighted supp}, the first term on the LHS of \eqref{eq 1} becomes
\begin{equation}\label{L.H.S. first term}
\begin{aligned}
\int_{\Sigma} H_{k-1}\varphi(u) d\mu
& = I_{k-1} \varphi\left(\frac{I_{k-2}}{I_{k-1}}\right) + \int_{\Sigma} \int_{\frac{I_{k-2}}{I_{k-1}}}^{u(x)} H_{k-1}(x) \varphi^{\prime \prime}(t)(u(x)-t) d t d\mu.
\end{aligned}
\end{equation}
Substituting these expressions into \eqref{eq 1} completes the proof.
\end{proof}
We will apply Proposition \ref{prop rn comp} by selecting different functions for $\varphi$. The results we obtain (Theorem \ref{thm mink deficit} to Theorem \ref{thm um rn}), in a conceptual sense, generally take the form:
\begin{equation*}
\boxed{ \quad \text{deficit} + \text{distance}^2 \le \text{integral} \quad}
\end{equation*}
where the ``deficit'' refers to the deficit $I_{k-1}^2 - I_k I_{k-2}$ (or the ratio $\frac{I_{k-1}^2}{I_k I_{k-2}}$) from the Alexandrov-Fenchel inequality. The ``distance'' represents a measure of how far $\Omega$ deviates from a particular reference ball, while ``integral'' is a non-negative integral $\mathrm{I}$ over $\partial \Omega$, provided that $\partial \Omega$ satisfies certain convexity conditions. A key property is that the non-negative integral $\mathrm{I}$ vanishes precisely when $\Omega$ is a ball centered at the origin.

To derive such estimates, we will select $\varphi$ as an increasing function that is concave, but with $x \varphi(x)$ being convex (or vice versa if $\varphi$ is decreasing). Specifically, we will consider the following functions for $\varphi(x)$:
\begin{enumerate}
\item\label{f1} $x$
\item\label{f2} $\frac{1}{x}$
\item\label{f3} $x^m$ for $0 \le m < 1$
\item\label{f4} $x^{-m}$ for $0 \le m \le 1$
\item $\log x$
\item Solution to $\varphi^{\prime}(x)=\frac{1}{1+x^m}$ for $0<m\le 2$.
\end{enumerate}
Although functions \eqref{f1} and \eqref{f2} can be regarded as members of the families in \eqref{f3} and \eqref{f4} respectively, we separate them here as the assumptions for the corresponding results differ.

\begin{remark}
    To better illustrate the types of inequalities or equations that can be obtained, we consider the case where $\Sigma=\partial \Omega$ is a strictly convex curve in $\mathbb{R}^2$, with the circumcenter at the origin and $r$ denoting the circumradius. The derived inequalities and equations are summarized in the following Table \ref{tab:inequalities}. Here, $A$ and $L$ are the area and perimeter of $\Omega$ respectively; $\delta_{2, 1}=\delta_{2, 1}\left(\Omega, B_0\left(\frac{L}{2 \pi}\right)\right)$ and $\delta_{2, 0}=\delta_{2, 0}\left(\Omega, B_0\left(\frac{2 A}{L}\right)\right)$ represent the weighted $L^2$ distances between $\Omega$ and the respective reference balls.
\end{remark}

\begin{table}[H]
\centering
\small
\caption{Summary of inequalities and equations for convex curves}
\label{tab:inequalities}
\renewcommand\arraystretch{4}
\begin{tabular}{|c|c|}
\hline
\textbf{Inequality or Equation} & \textbf{Theorem} \\
\hline
$\displaystyle \frac{L^2-4 \pi A}{2 \pi}+{\delta_{2, 1}}^2 = \int_{\Sigma} \kappa\left|X^T\right|^2 ds$ & \ref{thm mink deficit}\\
\hline
$\displaystyle \frac{L^2-2 \pi A}{\pi}+{\delta_{2, 1}}^2 \le \int_{\Sigma} \kappa|X|^2 ds$ & \ref{thm Hn x2}\\
\hline
$\displaystyle \frac{L^2-4 \pi A}{2 A}+\frac{1}{r^3} {\delta_{2, 0}}^2 \le \int_{\Sigma} \frac{\kappa\left|X^T\right|^2}{u^2} ds$ & \ref{thm rn 1/u} (i)\\
\hline
$\displaystyle L \log \left (\frac{L^2}{4 \pi A}\right)+\frac{1}{2 r} {\delta_{2, 1}}^2+\frac{1}{2 r^2} {\delta_{2, 0}}^2 \le \int_{\Sigma} \frac{\kappa\left|X^T\right|^2}{u} ds$ & \ref{thm rn 1/u} (ii)\\
\hline
$\begin{aligned}\displaystyle (2 \pi)^{m-1} \cdot \frac{L^2-4 \pi A}{(2 \pi)^m+L^m}+\frac{2+(2-m) r^m}{2\left(1+r^m\right)^2} {\delta_{2, 1}}^2\\+\frac{m}{2\left(1+r^m\right)^2 r^{1-m}} {\delta_{2, 0}}^2 \le \int_{\Sigma} \frac{\kappa\left|X^T\right|^2}{1+u^m} d s\end{aligned}$ & \ref{thm tan rn} (i) ($0 < m \le 1$) \\
\hline
$\displaystyle (2 \pi)^{m-1} \cdot \frac{L^2-4 \pi A}{(2 \pi)^m+L^m}+\frac{2+(2-m) r^m}{2\left(1+r^m\right)^2} {\delta_{2, 1}}^2\le \int_{\Sigma} \frac{\kappa\left|X^T\right|^2}{1+u^m} d s$ & \ref{thm tan rn} (ii) ($1 < m \le 2$) \\
\hline
$\displaystyle \left (\frac{2 \pi}{L}\right)^{1-m} \frac{L^2-4 \pi A}{2 \pi}+\frac{m+1}{2 r^{1-m}} {\delta_{2, 1}}^2 +\frac{1-m}{2 r^{2-m}} {\delta_{2, 0}}^2 \le \int_{\Sigma} \frac{\kappa\left|X^T\right|^2}{u^{1-m}} d s$ & \ref{thm um rn} (i) ($0 \le m<1$)\\
\hline
$\displaystyle \left (\frac{2 A}{L}\right)^{1-m} \frac{L^2-4 \pi A}{2 A}+\frac{1-m}{2 r^{m+1}} {\delta_{2, 1}}^2+\frac{m+1}{2 r^{m+2}} {\delta_{2, 0}}^2 \le \int_{\Sigma} \frac{\kappa\left|X^T\right|^2}{u^{m+1}} d s$ & \ref{thm um rn} (ii) ($0 \le m<1$)\\
\hline
$\displaystyle \frac{L^2-2 \pi A}{\pi}+2{\delta_{2, 1}}^2 =\int_{\Sigma} \kappa|X|^2 d s$ & \ref{thm1} \\
\hline
$\displaystyle \int_{\Sigma} \kappa|X|^2 d s - \frac{1}{\pi}\left(L^2-2 \pi A\right) \le \frac{1}{3\pi}\left(L^2-4\pi A\right)$ & \ref{cor isop} ($0=$ Steiner point) \\
\hline
\end{tabular}
\end{table}

\subsection{Results for $k$-convex hypersurfaces}
\begin{theorem}\label{thm mink deficit}
Let $\Sigma=\partial \Omega$ be a smooth, closed hypersurface in $\mathbb{R}^{n+1}$ with $H_k \ge 0$. Then
\begin{align*}
\frac{I_{k-1}{ }^2- I_k I_{k-2}}{I_k}+\delta_{2, k}\left(\Omega, B_0\left(\frac{I_{k-1}}{I_k}\right)\right)^2=\frac{1}{k\binom {n}{k}} \int_{\Sigma} T_{k-1}\circ A\left(X^T, X^T\right)d\mu.
\end{align*}
In particular, we have an upper bound for the deficit in the Alexandrov-Fenchel inequality:
\begin{equation}\label{ineq mink deficit}
\frac{I_{k-1}^2- I_k I_{k-2}}{I_k}\le\frac{1}{k\binom{n}{k}} \int_{\Sigma} T_{k-1} \circ A\left(X^T, X^T\right) d \mu.
\end{equation}
If $H_k>0$, then the equality holds if and only if $\Sigma$ is a sphere centered at the origin.
\end{theorem}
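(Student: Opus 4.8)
The plan is to read the identity off Proposition~\ref{prop rn comp} with the simplest admissible weight $\varphi(x)=x$, for which $\psi(t)=t\varphi(t)=t^{2}$, so that $\varphi''\equiv 0$ and $\psi''\equiv 2$. With this choice the $H_{k-1}$-part of the left-hand correction integral in Proposition~\ref{prop rn comp} disappears (it is multiplied by $\varphi''\equiv 0$), while the $H_k$-part collapses to the weighted $L^{2}$ distance:
\[
\int_\Sigma\!\left(\int_{I_{k-1}/I_k}^{u(x)}2H_k(x)\,(u(x)-t)\,dt\right)d\mu=\int_\Sigma H_k(x)\left(u(x)-\tfrac{I_{k-1}}{I_k}\right)^{2}d\mu=\delta_{2,k}\!\left(\Omega,B_0\!\left(\tfrac{I_{k-1}}{I_k}\right)\right)^{2},
\]
using $\int_a^b 2(b-t)\,dt=(b-a)^{2}$. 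Since $\varphi'\equiv 1$, the right-hand side of Proposition~\ref{prop rn comp} becomes $\frac{1}{k\binom{n}{k}}\int_\Sigma T_{k-1}\circ A(X^T,X^T)\,d\mu$, while the first left-hand term is $I_{k-1}\big(\tfrac{I_{k-1}}{I_k}-\tfrac{I_{k-2}}{I_{k-1}}\big)=\tfrac{I_{k-1}^{2}-I_kI_{k-2}}{I_k}$. Combining these yields the asserted identity, and the inequality \eqref{ineq mink deficit} follows at once because $\delta_{2,k}^{2}=\int_\Sigma H_k(u-\tfrac{I_{k-1}}{I_k})^{2}\,d\mu\ge 0$ under $H_k\ge 0$.

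Since Proposition~\ref{prop rn comp} is stated with the extra hypothesis $H_{k-1}\ge 0$ while Theorem~\ref{thm mink deficit} assumes only $H_k\ge 0$, I would actually reprove the needed case directly rather than quote it. Applying the weighted Minkowski formula \eqref{weighted mink space form} with $f=u$ (using $\nabla_\Sigma u=A(X^T)$) gives $\int_\Sigma H_{k-1}u\,d\mu+\frac{1}{k\binom{n}{k}}\int_\Sigma T_{k-1}\circ A(X^T,X^T)\,d\mu=\int_\Sigma H_ku^{2}\,d\mu$; by the classical Minkowski formula \eqref{eq mink} (with the convention $H_{-1}=\langle X,\nu\rangle$ covering the case $k=1$) the first term is $I_{k-2}$ and $\int_\Sigma H_ku\,d\mu=I_{k-1}$, and $I_k,I_{k-1}>0$ because the point of $\Sigma$ furthest from the origin is elliptic. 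Writing $r=I_{k-1}/I_k$ and expanding $u^{2}=(u-r)^{2}+2ru-r^{2}$ gives $\int_\Sigma H_ku^{2}\,d\mu=\delta_{2,k}(\Omega,B_0(r))^{2}+2rI_{k-1}-r^{2}I_k=\delta_{2,k}(\Omega,B_0(r))^{2}+I_{k-1}^{2}/I_k$, and rearranging reproduces the identity without ever using the sign of $H_{k-1}$.

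Finally, for the equality case assume $H_k>0$. Equality in \eqref{ineq mink deficit} is equivalent to $\delta_{2,k}^{2}=\int_\Sigma H_k(u-r)^{2}\,d\mu=0$, which forces $u\equiv r$ constant; then $\Sigma$ is a sphere centered at the origin exactly as in the equality discussion of Theorem~\ref{thm weighted supp}. Conversely, on a sphere of radius $R$ about $0$ one has $X^T=0$ and $H_j\equiv R^{-j}$, hence $I_j=\omega_nR^{n-j}$, so $I_{k-1}^{2}=I_kI_{k-2}$, $r=R=u$, $\delta_{2,k}=0$, and both sides of the identity vanish. I do not expect a genuine obstacle here: the only points to watch are the clean identification of the correction integral with $\delta_{2,k}^{2}$ (equivalently, the above expansion of $\int_\Sigma H_ku^{2}\,d\mu$), and the remark that $\varphi''\equiv 0$ is precisely what permits the convexity hypothesis to be weakened to $H_k\ge 0$ alone.
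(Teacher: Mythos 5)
Your proof is correct and takes essentially the same route as the paper: the paper also proves this by applying Proposition~\ref{prop rn comp} with $\varphi(u)=u$ and notes, as you do, that the hypothesis $H_{k-1}\ge 0$ is not needed in this special case because the $\varphi''$-term vanishes and \eqref{L.H.S. first term} reduces to the ordinary Minkowski formula. Your direct rederivation via \eqref{weighted mink space form} with $f=u$ and the expansion $u^2=(u-r)^2+2ru-r^2$ is just an explicit unwinding of that same observation, and the equality-case argument ($\delta_{2,k}^2=0$ with $H_k>0$ forces $u\equiv r$, hence a centered sphere) matches the paper's.
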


\begin{proof}
This follows from Proposition \ref{prop rn comp} by letting $\varphi(u)=u$. Note that $H_{k-1}\ge 0$ is not needed because when $\varphi(u)=u$, \eqref{L.H.S. first term} in the proof of Proposition \ref{prop rn comp} is simply the Minkowski formula, which holds regardless of the sign of $H_{k-1}$.

If $H_k>0$ and the equality in \eqref{ineq mink deficit} holds, then $\int_{\Sigma}(u-r)^2 H_k d \mu=0$, where $r=\frac{I_{k-1}}{I_k}$. This implies $(u-r)^2\equiv 0$ on $\Sigma$ and hence $\Sigma$ is a sphere of radius $r$ centered at $0$.
\end{proof}

As a corollary of Theorem \ref{thm mink deficit}, we provide a stability result of the geometric inequality \eqref{s1.kw} proved by the authors \cite[Theorem 1.3]{kwong2023geometric} for a closed, convex hypersurface in $\mathbb{R}^{n+1}$.
\begin{corollary}\label{thm Hn x2}
Let $\Sigma=\partial \Omega$ be a smooth, closed convex hypersurface in $\mathbb{R}^{n+1}$. Then
\begin{equation*}
n\delta_2\left(\Omega, B_0\left(\frac{I_{n-1}}{\omega_n}\right)\right)^2 \le \int_\Sigma H_n|X|^2d\mu-\frac{(n+1) I_{n-1}^2-n\omega_n I_{n-2}}{\omega_n}.
\end{equation*}
The equality holds if and only if $\Sigma$ is a sphere centered at the origin.
\end{corollary}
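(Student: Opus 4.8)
The plan is to specialize the identity of Theorem \ref{thm mink deficit} to $k=n$, simplify the Newton-tensor integral, and then trade the tangential part of the position vector for the full one by a single Cauchy--Schwarz estimate.

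First I would put $k=n$ in Theorem \ref{thm mink deficit}. Here $\binom{n}{n}=1$; since $\Omega$ is convex, the Gauss map of $\Sigma$ is a diffeomorphism onto $\mathbb{S}^n$, so $I_n=\int_\Sigma H_n\,d\mu=\omega_n$; and the top Newton tensor vanishes identically, $T_n\equiv 0$, which forces $T_{n-1}\circ A=A\circ T_{n-1}=H_n\,\mathrm{Id}$ and hence $T_{n-1}\circ A(X^T,X^T)=H_n|X^T|^2$. Thus the identity becomes
\[
\frac{I_{n-1}^2-\omega_n I_{n-2}}{\omega_n}+\delta_{2,n}\!\left(\Omega,B_0\!\left(\tfrac{I_{n-1}}{\omega_n}\right)\right)^2=\frac1n\int_\Sigma H_n|X^T|^2\,d\mu,
\]
and using $|X^T|^2=|X|^2-\langle X,\nu\rangle^2=|X|^2-u^2$ the right-hand side equals $\tfrac1n\int_\Sigma H_n|X|^2\,d\mu-\tfrac1n\int_\Sigma H_n u^2\,d\mu$.

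Next I would bound $\int_\Sigma H_n u^2\,d\mu$ from below. Applying the Cauchy--Schwarz inequality to the probability measure $H_n\,d\mu/\omega_n$ and using the Minkowski formula \eqref{eq mink} in the form $\int_\Sigma H_n u\,d\mu=I_{n-1}$ (equivalently, the inequality in \eqref{Hk u^2} with $k=n$), one gets $\int_\Sigma H_n u^2\,d\mu\ge \tfrac{I_{n-1}^2}{\omega_n}$. Substituting this into the displayed identity, multiplying through by $n$, and rearranging yields
\[
n\,\delta_{2,n}\!\left(\Omega,B_0\!\left(\tfrac{I_{n-1}}{\omega_n}\right)\right)^2\le\int_\Sigma H_n|X|^2\,d\mu-\frac{(n+1)I_{n-1}^2-n\omega_n I_{n-2}}{\omega_n}.
\]
Finally, since the spherical measure pushed forward by the inverse Gauss map satisfies $d\theta=H_n\,d\mu$, we have $\delta_{2,n}(\Omega,B_0(r))=\delta_2(\Omega,B_0(r))$, which gives the asserted inequality.

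For the equality statement I would trace back through the single inequality used: equality forces $\int_\Sigma H_n u^2\,d\mu=\tfrac{I_{n-1}^2}{\omega_n}$; by the equality case of Cauchy--Schwarz, together with $d\theta=H_n\,d\mu$, this means $\int_{\mathbb{S}^n}(u_\Omega-c)^2\,d\theta=0$ for the constant $c=I_{n-1}/\omega_n$, so the continuous support function $u_\Omega$ equals $c$ identically and $\Omega=B_0(c)$, while such a ball clearly gives equality. I do not anticipate a substantive obstacle; the only care needed is with the normalizations ($I_n=\omega_n$, $\binom{n}{n}=1$, and the identification $\delta_{2,n}=\delta_2$) and with the bookkeeping that turns the tangential integral into $\int_\Sigma H_n|X|^2\,d\mu$. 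Incidentally, keeping the exact value of $\int_\Sigma H_n u^2\,d\mu$ provided by Theorem \ref{thm weighted supp} (with $\varphi(x)=x^2$, $k=n$), namely $\tfrac{I_{n-1}^2}{\omega_n}+\delta_{2,n}(\Omega,B_0(\tfrac{I_{n-1}}{\omega_n}))^2$, in place of its lower bound would even recover the sharp constant $n+1$ in place of $n$, matching Theorem \ref{s1.thm2}.
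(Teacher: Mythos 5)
Your proposal is correct and essentially reproduces the paper's own argument: specialize Theorem \ref{thm mink deficit} at $k=n$ using $T_{n-1}\circ A=H_n\,\mathrm{Id}$ and $I_n=\omega_n$, split $|X|^2=|X^T|^2+u^2$, bound $\int_\Sigma H_nu^2\,d\mu\ge I_{n-1}^2/\omega_n$ by Cauchy--Schwarz and the Minkowski formula, and rearrange. The paper phrases the last step as multiplying the identity by $n$ and adding the Cauchy--Schwarz inequality, which is the same manipulation in a different order, and your equality analysis (constant support function via the equality case of Cauchy--Schwarz) is the same as the paper's; your closing remark about keeping the exact $\int_\Sigma H_nu^2$ to recover the constant $n+1$ indeed anticipates Theorem \ref{s1.thm2}.
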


\begin{proof}
We have $T_{n-1} \circ A = H_n I$ and
\begin{equation*}
\frac{\int_\Sigma H_n u \, d\mu}{\int_\Sigma H_n \, d\mu} = \frac{I_{n-1}}{\omega_n}.
\end{equation*}
Therefore, Theorem \ref{thm mink deficit} gives
\begin{align}\label{ineq XT}
\frac{I_{n-1}^2 - \omega_n I_{n-2}}{\omega_n} + \delta_2\left(\Omega, B_0\left(\frac{I_{n-1}}{\omega_n}\right)\right)^2 = \frac{1}{n} \int_\Sigma H_n |X^T|^2 \, d\mu.
\end{align}

On the other hand, using the Cauchy-Schwarz inequality and the Minkowski formula, we have
\begin{equation}\label{ineq X nu}
\int_\Sigma H_n \langle X, \nu \rangle^2 \, d\mu \ge \frac{1}{\int_\Sigma H_n \, d\mu} \left(\int_\Sigma H_n \langle X, \nu \rangle \, d\mu\right)^2 = \frac{1}{\omega_n} I_{n-1}^2.
\end{equation}
Multiplying \eqref{ineq XT} by $n$ and adding it to \eqref{ineq X nu}, we obtain the result.

If the equality holds, then $H_n$ is a non-zero multiple of $H_n \langle X, \nu \rangle$, and hence $\langle X, \nu \rangle$ is constant. Therefore, $\Sigma$ is a sphere centered at the origin.
\end{proof}

In Section \ref{sec. weight}, we will provide another proof of \eqref{s1.kw} by using a completely different method based on the analysis of the support function on the sphere, which additionally offers an explicit interpretation of the deficit in \eqref{s1.kw} in terms of the $L^2$ distance from the ball $B_0\left(\frac{I_{n-1}}{\omega_n}\right)$. See Theorem \ref{thm1}.

\begin{theorem}\label{thm rn 1/u}
Let $\Sigma = \partial \Omega$ be a closed hypersurface in $\mathbb{R}^{n+1}$. Suppose $0$ is the center of the circumscribed sphere of $\Sigma$, and assume further that $\Sigma$ is star-shaped with respect to $0$. Let $r$ denote the circumradius.

(i). If $H_{k-1} > 0$, then
\begin{equation}\label{ineq deficit 2}
\frac{I_{k-1}^2-I_k I_{k-2}}{I_{k-2}}+\frac{1}{r^3} \delta_{2, k-1}\left(\Omega, B_0\left(\frac{I_{k-2}}{I_{k-1}}\right)\right)^2 \le \frac{1}{k\binom{n}{k}} \int_{\Sigma} \frac{T_{k-1} \circ A\left(X^T, X^T\right)}{u^2}.
\end{equation}
The equality holds if and only if $\Sigma$ is a sphere centered at the origin.

(ii). If $H_k>0$, then the Alexandrov-Fenchel ratio $\frac{I_{k-1}^2}{I_k I_{k-2}}$ satisfies
\begin{align}\label{s4.convex-3}
I_{k-1}\log \left(\frac{I_{k-1}^2}{I_k I_{k-2}}\right)+\frac{1}{2 r} \delta_{2, k}\left(\Omega, B_0\left(\frac{I_{k-1}}{I_k}\right)\right)^2+\frac{1}{2 r^2} \delta_{2, k-1}\left(\Omega, B_0\left(\frac{I_{k-2}}{I_{k-1}}\right)\right)^2 \nonumber\\
\le \frac{1}{k\binom{n}{k}} \int_{\Sigma} \frac{T_{k-1} \circ A\left(X^T, X^T\right)}{u} d \mu.
\end{align}
The equality holds if and only if $\Sigma$ is a sphere centered at the origin.
\end{theorem}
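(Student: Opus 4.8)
The plan is to apply Proposition \ref{prop rn comp} with two carefully chosen weights and then, in each case, estimate the Jensen-deficit integrals on the left-hand side from below using the bound $0 < u \le r$ on the support function — which is exactly where the hypotheses enter. For part (i) take $\varphi(x)=1/x$, and for part (ii) take $\varphi(x)=\log x$. First note that $u=\langle X,\nu\rangle \le |X| \le r$ because $0$ is the circumcenter with circumradius $r$, and $u>0$ because $\Sigma$ is star-shaped with respect to $0$; hence $\mathrm{Range}(u)\subset(0,r]$, which makes $1/x$, $\log x$ and their derivatives smooth there and legitimizes the manipulations below. The positivity of the quermassintegrals $I_{k-2}, I_{k-1}, I_k$ that appear in denominators follows as usual: the point of $\Sigma$ farthest from $0$ is elliptic, so together with the pointwise positivity assumed in each part it places $\Sigma$ in the appropriate G\aa rding cone, and then $u>0$ together with the Minkowski formula \eqref{eq mink} gives $I_j>0$ for the relevant $j$.

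\emph{Part (i).} With $\varphi(x)=1/x$ one has $\psi(t)=t\varphi(t)\equiv 1$, so the $\psi''$-integral in Proposition \ref{prop rn comp} vanishes identically, while $\varphi''(t)=2/t^3$ and $\varphi'(u)=-1/u^2$. Since $\varphi(I_{k-1}/I_k)=I_k/I_{k-1}$ and $\varphi(I_{k-2}/I_{k-1})=I_{k-1}/I_{k-2}$, the leading term is $I_{k-1}(I_k/I_{k-1}-I_{k-1}/I_{k-2})=-(I_{k-1}^2-I_kI_{k-2})/I_{k-2}$, and after multiplying the identity of Proposition \ref{prop rn comp} by $-1$ it reads
\begin{align*}
& \frac{I_{k-1}^2-I_kI_{k-2}}{I_{k-2}} + \int_\Sigma \int_{I_{k-2}/I_{k-1}}^{u(x)} \frac{2H_{k-1}(x)}{t^3}\bigl(u(x)-t\bigr)\,dt\,d\mu \\
& \quad = \frac{1}{k\binom{n}{k}} \int_\Sigma \frac{T_{k-1}\circ A(X^T,X^T)}{u^2}\,d\mu.
\end{align*}
Now apply the lower-bound half of Lemma \ref{lem jensen} to the probability measure $H_{k-1}\,d\mu/I_{k-1}$ (whose mean of $u$ is $I_{k-2}/I_{k-1}$ by the Minkowski formula) and the function $t\mapsto 1/t$: since $\varphi''(t)=2/t^3\ge 2/r^3$ on $\mathrm{Range}(u)$, the double integral is at least $\tfrac12\cdot\tfrac{2}{r^3}\int_\Sigma H_{k-1}(u-I_{k-2}/I_{k-1})^2\,d\mu = r^{-3}\,\delta_{2,k-1}(\Omega,B_0(I_{k-2}/I_{k-1}))^2$. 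Dropping this non-negative term gives \eqref{ineq deficit 2}. Equality forces the double integral to equal its lower bound; as $H_{k-1}>0$ everywhere and $2/t^3>2/r^3$ unless $t=r$, this is only possible if $u\equiv I_{k-2}/I_{k-1}$, i.e.\ $\Sigma$ is a sphere centered at $0$ (and conversely both sides then vanish).

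\emph{Part (ii).} With $\varphi(x)=\log x$ one has $\psi(t)=t\log t$, so $\psi''(t)=1/t$ and $\varphi''(t)=-1/t^2$, while $\varphi'(u)=1/u$; also $\varphi(I_{k-1}/I_k)-\varphi(I_{k-2}/I_{k-1})=\log\frac{I_{k-1}^2}{I_kI_{k-2}}$. Thus Proposition \ref{prop rn comp} gives
\begin{align*}
& I_{k-1}\log\frac{I_{k-1}^2}{I_kI_{k-2}} + \int_\Sigma \int_{I_{k-1}/I_k}^{u(x)} \frac{H_k(x)}{t}\bigl(u(x)-t\bigr)\,dt\,d\mu \\
& \quad + \int_\Sigma \int_{I_{k-2}/I_{k-1}}^{u(x)} \frac{H_{k-1}(x)}{t^2}\bigl(u(x)-t\bigr)\,dt\,d\mu \\
& \quad = \frac{1}{k\binom{n}{k}} \int_\Sigma \frac{T_{k-1}\circ A(X^T,X^T)}{u}\,d\mu.
\end{align*}
Since $1/t\ge 1/r$ and $1/t^2\ge 1/r^2$ on $(0,r]$, the same Jensen lower bound applied with the measure $H_k\,d\mu/I_k$ (mean of $u$ equal to $I_{k-1}/I_k$) bounds the first double integral below by $\tfrac{1}{2r}\,\delta_{2,k}(\Omega,B_0(I_{k-1}/I_k))^2$, and applied with $H_{k-1}\,d\mu/I_{k-1}$ bounds the second below by $\tfrac{1}{2r^2}\,\delta_{2,k-1}(\Omega,B_0(I_{k-2}/I_{k-1}))^2$; discarding both non-negative remainders yields \eqref{s4.convex-3}. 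The equality case follows from the first double integral and $H_k>0$ exactly as in part (i).

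The step I expect to require the most care is pinning down the admissible range of $u$: the entire gain in the distance terms comes from the uniform bound $0<t\le r$ inside the inner integrals, so one must check that, under the stated hypotheses, the integration endpoints $I_{k-1}/I_k$, $I_{k-2}/I_{k-1}$ and the values $u(x)$ all lie in $(0,r]$ — the endpoints because they are weighted means of $u$, the values because the origin is the circumcenter — and that the relevant $I_j$ are strictly positive so that logarithms and reciprocals are well-defined on $\mathrm{Range}(u)$. This is also precisely why star-shapedness with respect to the origin cannot be dropped here, in contrast to Theorem \ref{thm mink deficit}.
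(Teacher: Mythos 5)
Your proof is correct and follows essentially the same route as the paper's: in both parts it applies Proposition \ref{prop rn comp} with $\varphi(x)=1/x$ (so that $\psi\equiv 1$ and the $H_k$-Jensen term vanishes) and $\varphi(x)=\log x$, then bounds the remaining Jensen remainders from below via $\min_I\varphi''$ and $\min_I\psi''$ using $0<u\le r$ from star-shapedness and the circumcenter assumption, and closes the equality case by the strict monotonicity (injectivity) of the relevant second derivative together with $H_{k-1}>0$ (resp.\ $H_k>0$). The only cosmetic difference is that you derive the lower bounds directly from Lemma \ref{lem jensen} rather than phrasing them via $\min_I\varphi''$/$\max_I\varphi''$ as the paper does, and your equality discussion is more compressed but reaches the same conclusion.
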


\begin{proof}
\textbf{Proof of (i). } Since $H_{k-1}>0$ on $\Sigma$, we have that $\Sigma$ is $(k-1)$-convex by Proposition 3.2 of \cite{BC97}. So both $I_{k-1}$ and $I_{k-2}$ are positive. Let $\varphi(u)=\frac{1}{u}$ and apply Proposition \ref{prop rn comp} to obtain
\begin{align*}
& -\frac{1}{k\binom{n}{k}} \int_\Sigma\frac{T_{k-1}\circ A\left(X^T, X^T\right)}{u^2}d\mu\\
=& I_{k-1}\left(\frac{I_k}{I_{k-1}}-\frac{I_{k-1}}{I_{k-2}}\right) -\int_\Sigma \int_{\frac{I_{k-2}}{I_{k-1}}}^{u(x)} H_{k-1} \varphi^{\prime \prime}(t)(u(x)-t) d t d \mu\\
\le& I_{k-1}\left(\frac{I_k}{I_{k-1}}-\frac{I_{k-1}}{I_{k-2}}\right)- \frac{1}{2}\min_I \varphi''\int_{\Sigma} H_{k-1}(u-\bar u)^2 d \mu\\
=& \frac{I_k I_{k-2}-I_{k-1}{ }^2}{I_{k-2}}- \frac{1}{2}\left(\min_I \varphi''\right)\delta_{2, k-1}\left(\Omega, B_0\left(\bar u\right)\right)^2,
\end{align*}
where
\begin{equation*}
\bar u=\frac{\int_\Sigma H_{k-1} u d\mu}{\int_\Sigma H_{k-1}d\mu}= \frac{I_{k-2}}{I_{k-1}}.
\end{equation*}
We also have
\begin{equation*}
\min_I \varphi''=\frac{2}{\max_\Sigma u^3}=\frac{2}{r^3}.
\end{equation*}
So, rearranging gives
\begin{align*}
\frac{I_{k-1}^2-I_k I_{k-2}}{I_{k-2}}+\frac{1}{r^3} \delta_{2, k-1}\left(\Omega, B_0\left(\frac{I_{k-2}}{I_{k-1}}\right)\right)^2
\le \frac{1}{k\binom{n}{k}} \int_{\Sigma} \frac{T_{k-1} \circ A\left(X^T, X^T\right)}{u^2}.
\end{align*}

We now analyze the case of equality. First, by Taylor's theorem,
\begin{equation}\label{taylor}
\varphi(u(x)) - \varphi(\bar{u})
\ge \varphi'(\bar{u})(u(x) - \bar{u})
+ \frac{1}{2} \min_{I} \varphi'' (u(x) - \bar{u})^2
\end{equation}
holds for all $x\in\Sigma$. Note that $\varphi''$ is strictly decreasing. If $u \not\equiv \bar{u}$, then $\min_{I}\varphi'' < \varphi''(\bar{u})$, and there exists an open set $U\subset \Sigma$ such that $|u(x) - \bar{u}| < |\bar{u} - u(x_0)|$ for $x\in U$, where $\varphi''(u(x_0)) = \min_{I} \varphi''$. By Taylor's theorem, for $x\in U$, we have
$\varphi(u(x)) - \varphi(\bar{u}) > \varphi'(\bar{u})(u(x) - \bar{u}) + \frac{1}{2} \min_I \varphi'' (u(x) - \bar{u})^2$.
Integrating \eqref{taylor} with respect to the measure $H_{k-1}d\mu$ thus gives a strict inequality. In view of the proof of Proposition \ref{prop rn comp}, the equality in \eqref{ineq deficit 2} cannot hold.

Therefore, the equality in \eqref{ineq deficit 2} holds if and only if $u \equiv \bar{u}$, which implies $\Sigma$ is a sphere centered at the origin.

\textbf{Proof of (ii). } By \cite[Proposition 3.2]{BC97}, the condition $H_k>0$ implies that $\Sigma$ is $k$-convex. Then $I_k>0, I_{k-1}>0$ and $I_{k-2}>0$. Take $\varphi(u) = \log u$ in Proposition \ref{prop rn comp}. Since $\min_I \psi''=\frac{1}{r}$ and $\max_I \varphi''=-\frac{1}{r^2}$, we have
\begin{align*}
& I_{k-1}\left(\log\left(\frac{I_{k-1}}{I_k}\right)-\log\left(\frac{I_{k-2}}{I_{k-1}}\right)\right)+
\frac{1}{2 r} \delta_{2, k}\left(\Omega, B_0\left(\frac{I_{k-1}}{I_k}\right)\right)^2\\
& \quad +\frac{1}{2 r^2} \delta_{2, k-1}\left(\Omega, B_0\left(\frac{I_{k-2}}{I_{k-1}}\right)\right)^2
\le\frac{1}{k\binom{n}{k}} \int_{\Sigma} \frac{T_{k-1} \circ A\left(X^T, X^T\right)}{u} d \mu.
\end{align*}

Rearranging, we obtain the desired result.

As $\varphi''$ is strictly increasing, if the equality case holds, the equality case follows from the same type of argument as in Case (i).
\end{proof}

\subsection{Results for convex hypersurfaces}
\begin{theorem}\label{thm tan rn}
Let $\Sigma=\partial \Omega$ be a closed convex hypersurface in $\mathbb{R}^{n+1}$.
Suppose $0$ is the center of the circumscribed sphere of $\Sigma$. Let $r$ denote the circumradius. Then

(i). For $0<m\le 1$,
\begin{align}\label{s4.convex-1}
& I_k^{m-1}\cdot\frac{ I_{k-1}^2- I_k I_{k-2} }{I_k^m+I_{k-1}^m}
+\frac{2+(2-m) r^m}{2\left(1+r^m\right)^2} \delta_{2, k}\left(\Omega, B_0\left(\frac{I_{k-1}}{I_k}\right)\right)^2\nonumber\\
& \qquad+\frac{m}{2\left(1+r^m\right)^2 r^{1-m}} \delta_{2, k-1}\left(\Omega, B_0\left(\frac{I_{k-2}}{I_{k-1}}\right)\right)^2
\le \frac{1}{k\binom{n}{k}} \int_{\Sigma} \frac{T_{k-1} \circ A\left(X^T, X^T\right)}{1+u^m} d \mu.
\end{align}

(ii). For $1<m\le 2$,
\begin{align}\label{s4.convex-2}
& I_k^{m-1} \cdot \frac{I_{k-1}^2-I_k I_{k-2}}{I_k^m+I_{k-1}^m}
+\frac{2+(2-m) r^m}{2\left(1+r^m\right)^2} \delta_{2, k}\left(\Omega, B_0\left(\frac{I_{k-1}}{I_k}\right)\right)^2\nonumber\\
& \qquad
\le \frac{1}{k\binom{n}{k}} \int_{\Sigma} \frac{T_{k-1} \circ A\left(X^T, X^T\right)}{1+u^m} d \mu.
\end{align}
If $H_k>0$, then the equality holds in \eqref{s4.convex-1} or \eqref{s4.convex-2} if and only if $\Sigma$ is a sphere centered at the origin.
\end{theorem}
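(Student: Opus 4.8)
The plan is to apply Proposition \ref{prop rn comp} with the function $\varphi$ determined by $\varphi'(x)=\frac{1}{1+x^m}$ — the additive constant is immaterial since only $\varphi'$ and differences $\varphi(\cdot)-\varphi(\cdot)$ occur — so that the right-hand side of Proposition \ref{prop rn comp} is exactly $\frac{1}{k\binom{n}{k}}\int_\Sigma \frac{T_{k-1}\circ A(X^T,X^T)}{1+u^m}\,d\mu$. Writing $\psi(x)=x\varphi(x)$, one computes
\[
\varphi''(x)=-\frac{m\,x^{m-1}}{(1+x^m)^2},\qquad
\psi''(x)=2\varphi'(x)+x\varphi''(x)=\frac{2+(2-m)x^m}{(1+x^m)^2}.
\]
Since $0<m\le 2$, the numerator of $\psi''$ is $\ge 2>0$, so $\psi$ is convex, while $\varphi''<0$, so $\varphi$ is concave. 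Substituting $s=x^m$ and differentiating, one sees that $\psi''$ is strictly decreasing on $(0,\infty)$ for all $0<m\le 2$, that $-\varphi''$ is strictly decreasing on $(0,\infty)$ when $0<m\le 1$, and that for $1<m\le 2$ one only has $-\varphi''\ge 0$ with infimum $0$. Because $\Sigma$ is convex with $0$ its circumcenter, $0\in\Omega$ and $0\le u\le r$ on $\Sigma$, with $u=r$ at the point of $\Sigma$ farthest from the origin (where the position vector is normal); hence $\max_\Sigma u=r$, and the extrema of $\psi''$ and $-\varphi''$ over $\mathrm{Range}(u)$ are their values at $r$. The existence of this elliptic point also makes all the relevant $I_j$ positive, so Proposition \ref{prop rn comp} applies.

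I would then bound the three groups of terms in the identity of Proposition \ref{prop rn comp}. Put $\bar u_1=\frac{I_{k-1}}{I_k}$ and $\bar u_2=\frac{I_{k-2}}{I_{k-1}}$; by the Minkowski formula $\bar u_1$ is the $u$-average against $\frac{H_k\,d\mu}{I_k}$ and $\bar u_2$ the $u$-average against $\frac{H_{k-1}\,d\mu}{I_{k-1}}$. First, concavity of $\varphi$ gives $\varphi(\bar u_1)-\varphi(\bar u_2)\ge \varphi'(\bar u_1)(\bar u_1-\bar u_2)$; since $\varphi'(\bar u_1)=\frac{I_k^m}{I_k^m+I_{k-1}^m}$ and $I_{k-1}(\bar u_1-\bar u_2)=\frac{I_{k-1}^2-I_kI_{k-2}}{I_k}$, this produces the deficit term $\frac{I_k^{m-1}(I_{k-1}^2-I_kI_{k-2})}{I_k^m+I_{k-1}^m}$. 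Second, applying Lemma \ref{lem jensen} to the convex $\psi$ and recognizing the Jensen remainder as the first integral term of Proposition \ref{prop rn comp},
\[
\int_\Sigma \int_{\bar u_1}^{u(x)} H_k\,\psi''(t)(u(x)-t)\,dt\,d\mu \ \ge\ \tfrac12\Big(\min_{\mathrm{Range}(u)}\psi''\Big)\delta_{2,k}\big(\Omega,B_0(\bar u_1)\big)^2
=\frac{2+(2-m)r^m}{2(1+r^m)^2}\,\delta_{2,k}\big(\Omega,B_0(\bar u_1)\big)^2 .
\]
Third, for part (i), applying Lemma \ref{lem jensen} to the concave $\varphi$ shows the second integral term of Proposition \ref{prop rn comp} is at most $\tfrac12(\max_{\mathrm{Range}(u)}\varphi'')\,\delta_{2,k-1}(\Omega,B_0(\bar u_2))^2$; since $-\varphi''$ is decreasing, $-\max_{\mathrm{Range}(u)}\varphi''=-\varphi''(r)=\frac{m}{(1+r^m)^2 r^{1-m}}$, so after the negation appearing in Proposition \ref{prop rn comp} this term contributes at least $\frac{m}{2(1+r^m)^2 r^{1-m}}\,\delta_{2,k-1}(\Omega,B_0(\bar u_2))^2$. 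For part (ii), since $\varphi''\le 0$ the inner integral $\int_{\bar u_2}^{u(x)}\varphi''(t)(u(x)-t)\,dt\le 0$ pointwise, so after negation that term contributes a nonnegative quantity, which I simply drop. Feeding these three bounds into Proposition \ref{prop rn comp} yields \eqref{s4.convex-1} and \eqref{s4.convex-2}.

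For the equality case, assume $H_k>0$. A sphere centered at $0$ makes $u$ constant and $X^T\equiv 0$, so both sides vanish. Conversely, equality forces equality in the bound for the first integral term above; since $\psi''$ is strictly decreasing, the argument from the proof of Theorem \ref{thm rn 1/u}(i) — Taylor's theorem with integral remainder, strict on an open subset of $\Sigma$ unless $u\equiv\bar u_1$, integrated against $H_k\,d\mu$ with $H_k>0$ — forces $u\equiv\bar u_1=\frac{I_{k-1}}{I_k}$, i.e., $\Sigma$ is the geodesic sphere of radius $\frac{I_{k-1}}{I_k}$ centered at the origin.

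The main obstacle is the elementary-but-fiddly calculus in the middle step: one must verify $\psi''>0$ — which is exactly what forces the restriction $m\le 2$ — and establish the monotonicity of $\psi''$ and of $-\varphi''$ so that their extrema over $\mathrm{Range}(u)$ collapse to values at the circumradius $r$. The monotonicity of $-\varphi''$ breaks down precisely when $m>1$ (its infimum drops to $0$), which is why case (ii) must discard the $\delta_{2,k-1}$ term. A secondary point is ensuring $\mathrm{Range}(u)\subseteq(0,r]$ with $\max_\Sigma u=r$, which rests on convexity together with the choice of $0$ as the circumcenter.
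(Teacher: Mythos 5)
Your proposal is correct and follows essentially the same route as the paper's proof: choose $\varphi$ with $\varphi'=1/(1+u^m)$, feed it into Proposition \ref{prop rn comp}, compute $\psi''$ and $\varphi''$, bound the Jensen remainders via Lemma \ref{lem jensen} using the extrema of $\psi''$ and $\varphi''$ over $\mathrm{Range}(u)\subset[0,r]$, and handle the equality case by the injectivity of $\psi''$ as in Theorem \ref{thm rn 1/u}. The only (minor but genuine) difference is in producing the deficit term: the paper writes $\varphi(\bar u_1)-\varphi(\bar u_2)=\varphi'(c)(\bar u_1-\bar u_2)\ge\varphi'(\bar u_1)(\bar u_1-\bar u_2)$ via the mean value theorem together with the Alexandrov--Fenchel inequality $\bar u_1\ge\bar u_2$ to locate $c\le\bar u_1$, whereas you obtain the same bound directly from concavity of $\varphi$ (tangent line above the graph), which does not need Alexandrov--Fenchel at all; you also spell out the monotonicity of $\psi''$ and $-\varphi''$, which the paper uses implicitly. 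Your version is marginally more self-contained, and both give the identical final estimate.
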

\begin{proof}
We claim that $0$ cannot lie strictly outside $\Sigma$, and hence $u \ge 0$. If not, then the point $p$ on $\Sigma$ closest to $0$ is non-zero. Therefore, $\Sigma$ lies within $\{x:\langle x, p\rangle > \delta\}$ for some $\delta>0$, and thus $\Sigma$ strictly lies within one half of the circumscribed ball. If we translate the circumscribed sphere $\Gamma$ in the direction of $p$ by $\delta$ units, the resulting sphere $\Gamma^{\prime}$ contains $\Sigma$ strictly in its interior, which contradicts the assumption that $\Gamma$ is the smallest sphere containing $\Sigma$. (See \cite[Lemma 1.11]{GreenOsher2003} for a comprehensive proof, which holds irrespective of the dimension.)

Let $\varphi$ be the function satisfying $\varphi'(u) = \frac{1}{1+u^m}$ with $\varphi(0) = 0$. Define $\psi(u) = u \varphi(u)$.
We compute
$$\varphi''(u) = -\frac{m u^{m-1}}{(1+u^m)^2}\quad
\text{and}\quad
\psi''(u) = 2 \varphi'(u) + u \varphi''(u) = \frac{2 + (2-m) u^m}{(1+u^m)^2}. $$
Thus,
$$\min_I \psi'' = \frac{2 + (2-m)(\max_\Sigma u)^m}{(1+(\max_\Sigma u)^m)^2} = \frac{2+(2-m)r^m}{(1+r^m)^2}. $$
We analyze the maximum of $\varphi''$:
\begin{enumerate}
\item
For $0 < m \le 1$,
$$\max_I \varphi'' = -\frac{m}{(1+(\max_\Sigma u)^m)^2 (\max_\Sigma u)^{1-m}} = -\frac{m}{(1+r^m)^2 r^{1-m}}. $$
\item
For $1 < m \le 2$,
$\max_I \varphi'' \le 0$.
\end{enumerate}
Applying Proposition \ref{prop rn comp}, we obtain the following inequality for $0 < m \le 1$:
\begin{equation}\label{ineq 0<m le 1}
\begin{split}
& \frac{1}{k\binom{n}{k}} \int_{\Sigma} \frac{T_{k-1} \circ A\left(X^{T}, X^{T}\right)}{1+u^m} d \mu\\
\ge& I_{k-1}\left(\varphi\left(\frac{I_{k-1}}{I_k}\right)-\varphi\left(\frac{I_{k-2}}{I_{k-1}}\right)\right)\\
& \quad+\frac{1}{2}\min_I \psi^{\prime \prime} \int_{\Sigma} H_k\left(u-\frac{I_{k-1}}{I_k}\right)^2 d \mu
-\frac{1}{2}\max_I \varphi^{\prime \prime} \int_{\Sigma} H_{k-1}\left(u-\frac{I_{k-2}}{I_{k-1}}\right)^2 d \mu
\\
=& I_{k-1}\left(\varphi\left(\frac{I_{k-1}}{I_k}\right)-\varphi\left(\frac{I_{k-2}}{I_{k-1}}\right)\right)\\
& \quad +\frac{2+(2-m) r^m}{2\left(1+r^m\right)^2} \delta_{2, k}\left(\Omega, B_0\left(\frac{I_{k-1}}{I_k}\right)\right)^2\\
&\quad +\frac{m}{2\left(1+r^m\right)^2 r^{1-m}}\delta_{2, k-1}\left(\Omega, B_0\left(\frac{I_{k-2}}{I_{k-1}}\right)\right)^2.
\end{split}
\end{equation}

In the case where $1<m\le 2$, we have instead
\begin{equation}\label{ineq 1<m le 2}
\begin{split}
& \frac{1}{k\binom{n}{k}} \int_{\Sigma} \frac{T_{k-1} \circ A\left(X^{T}, X^{T}\right)}{1+u^m} d \mu\\
\ge& I_{k-1}\left(\varphi\left(\frac{I_{k-1}}{I_k}\right)-\varphi\left(\frac{I_{k-2}}{I_{k-1}}\right)\right)\\
& \quad +\frac{2+(2-m) r^m}{2\left(1+r^m\right)^2} \delta_{2, k}\left(\Omega, B_0\left(\frac{I_{k-1}}{I_k}\right)\right)^2.
\end{split}
\end{equation}

Let $u_2=\frac{I_{k-1}}{I_k}$ and $u_1=\frac{I_{k-2}}{I_{k-1}}$. Note that $u_2\ge u_1$ by the Alexandrov-Fenchel inequality \eqref{s1.AF2} for convex hypersurface and so by mean value theorem,
\begin{align*}
\varphi(u_2) -\varphi(u_1)=\frac{1}{1+c^n}\left(u_2-u_1\right)
\ge\frac{1}{1+u_2^n}\left(u_2-u_1\right)
\end{align*}
for some $c\in[u_1, u_2]$. Putting this into the inequalities \eqref{ineq 0<m le 1} and \eqref{ineq 1<m le 2} above, we can get the result.

If $H_k > 0$ and the equality case holds, then, as in the proof of Theorem \ref{thm rn 1/u}, the injectivity of $\psi''(u) = \frac{2+(2-n) u^n}{\left(1+u^n\right)^2}$ implies that $u = \bar{u}$ is a constant. Therefore, $\Sigma$ is a sphere centered at the origin.
\end{proof}

\begin{theorem} \label{thm um rn}
Let $\Sigma$ be a strictly convex hypersurface in $\mathbb{R}^{n+1}$. Suppose that $0$ is the center of the circumscribed sphere of $\Sigma$ and let $r$ denote the circumradius. Then

(i). For $0\le m< 1$,
\begin{align} \label{s4.convex-4}
& \left(\frac{I_k}{I_{k-1}}\right)^{1-m} \frac{I_{k-1}^2-I_k I_{k-2}}{I_k}
+\frac{ m+1 }{2 r^{1-m}} \delta_{2, k}\left(\Omega, B_0\left(\frac{I_{k-1}}{I_k}\right)\right)^2\nonumber\\
& \qquad +\frac{ 1-m }{2 r^{2-m}} \delta_{2, k-1}\left(\Omega, B_0\left(\frac{I_{k-2}}{I_{k-1}}\right)\right)^2
\le \frac{1}{k\binom{n}{k}} \int_{\Sigma} \frac{T_{k-1} \circ A\left(X^T, X^T\right)}{u^{1-m}} d \mu.
\end{align}
The equality holds if and only if $\Sigma$ is a sphere centered at $0$.

(ii). For $0{\le}m\le 1$,
\begin{align}\label{s4.convex-5}
& \left(\frac{I_{k-2}}{I_{k-1}}\right)^{1-m} \frac{I_{k-1}{ }^2-I_{k}I_{k-2}}{I_{k-2}}+\frac{ 1-m }{2 r^{m+1}} \delta_{2, k}\left(\Omega, B_0\left(\frac{I_{k-1}}{I_k}\right)\right)^2\nonumber\\
& \qquad+\frac{ m+1 }{2 r^{m+2}} \delta_{2, k-1}\left(\Omega, B_0\left(\frac{I_{k-2}}{I_{k-1}}\right)\right)^2
\le \frac{1}{k\binom{n}{k}} \int_{\Sigma} \frac{T_{k-1} \circ A\left(X^T, X^T\right)}{u^{m+1}} d \mu.
\end{align}
The equality holds if and only if $\Sigma$ is a sphere centered at $0$.
\end{theorem}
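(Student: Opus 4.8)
The proof will follow the same template as Theorems \ref{thm mink deficit}--\ref{thm tan rn}: apply Proposition \ref{prop rn comp} with a suitable power weight, estimate the two nested Jensen integrals by the weighted $L^2$ distances via Lemma \ref{lem jensen}, and estimate the $\varphi$-difference of the quermassintegral ratios by the Alexandrov--Fenchel deficit. Concretely, I take $\varphi(x)=x^{m}$ (with $0<m<1$) for part (i) and $\varphi(x)=x^{-m}$ (with $0<m\le 1$) for part (ii), and set $\psi(x)=x\varphi(x)$, so $\psi(x)=x^{m+1}$ in part (i) and $\psi(x)=x^{1-m}$ in part (ii). Strict convexity gives $H_k,H_{k-1},H_{k-2}>0$ and hence $I_k,I_{k-1},I_{k-2}>0$, while the half-space argument from the proof of Theorem \ref{thm tan rn} shows the circumcenter $0$ cannot lie strictly outside $\Sigma$, so $u\ge 0$; since $0$ is the circumcenter, $\max_\Sigma u=r$. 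One computes directly that $\psi''$ and $\varphi''$ are monotone power functions of definite sign, so their relevant extrema over $\mathrm{Range}(u)\subset(0,r]$ are all attained at the endpoint $x=r$: in part (i), $\psi''>0$ with $\min_I\psi''=m(m+1)r^{m-1}$ and $\varphi''<0$ with $\max_I\varphi''=m(m-1)r^{m-2}$; in part (ii), $\psi''\le 0$ with $\max_I\psi''=-m(1-m)r^{-m-1}$ and $\varphi''>0$ with $\min_I\varphi''=m(m+1)r^{-m-2}$.

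Feeding this into Proposition \ref{prop rn comp}, I note that $\varphi'(u)=\pm m\,u^{\mp m-1}$, so its right-hand side equals $\pm\frac{m}{k\binom{n}{k}}\int_\Sigma u^{\mp m-1}\,T_{k-1}\circ A(X^T,X^T)\,d\mu$; dividing the identity through by $\pm m$ produces exactly the integral on the right of \eqref{s4.convex-4} (resp.\ \eqref{s4.convex-5}). For the two nested integrals I use the one-sided bounds underlying Lemma \ref{lem jensen}, namely $\int_{\bar u}^{u}\psi''(t)(u-t)\,dt\ge \tfrac12(\min_I\psi'')(u-\bar u)^2$ when $\psi''$ has a sign, and the analogous upper bound with $\max_I\varphi''$; integrating against $H_k\,d\mu$ and $H_{k-1}\,d\mu$ turns these into $\tfrac12(\min_I\psi'')\,\delta_{2,k}(\Omega,B_0(I_{k-1}/I_k))^2$ and $\tfrac12(\max_I\varphi'')\,\delta_{2,k-1}(\Omega,B_0(I_{k-2}/I_{k-1}))^2$. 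After division by $\pm m$ and substitution of the extremal values at $x=r$, the coefficients become precisely $\frac{m+1}{2r^{1-m}}$, $\frac{1-m}{2r^{2-m}}$ in part (i), and $\frac{1-m}{2r^{m+1}}$, $\frac{m+1}{2r^{m+2}}$ in part (ii), matching the statement. Keeping careful track of sign flips here, since $\varphi$ is decreasing in both parts and $\psi''\le 0$ in part (ii), is the only genuinely error-prone step.

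It then remains to bound $\frac{1}{\pm m}\,I_{k-1}\bigl(\varphi(I_{k-1}/I_k)-\varphi(I_{k-2}/I_{k-1})\bigr)$ from below by the deficit term. Put $s=I_{k-1}^2/(I_kI_{k-2})$, which is $\ge 1$ by Theorem A (which also gives $I_{k-1}/I_k\ge I_{k-2}/I_{k-1}$). A short algebraic rearrangement reduces the desired inequality, in both parts, to the single elementary estimate $\frac{1-s^{-m}}{m}\ge 1-\frac1s$ for $s\ge 1$ and $0<m\le 1$, which is just the tangent-line inequality $t^{m}\le 1+m(t-1)$ for the concave function $x\mapsto x^{m}$, evaluated at $t=1/s$; note that for part (ii) the plain mean value theorem estimate is not sharp enough, so this concavity inequality is essential. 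Assembling the three lower bounds yields \eqref{s4.convex-4} and \eqref{s4.convex-5}. The boundary values $m=0$ in both parts, and $m=1$ in part (ii) (where $\psi''\equiv 0$), follow by continuity of all terms in $m$ or reduce directly to Theorems \ref{thm mink deficit} and \ref{thm rn 1/u}. Finally, for the equality case: if $H_k>0$ and equality holds, then since $\psi''$ (part (i), and part (ii) with $m<1$) or $\varphi''$ (part (ii) with $m=1$) is strictly monotone, the Taylor-expansion argument from the proof of Theorem \ref{thm rn 1/u} forces $u$ to be constant, whence $\Sigma$ is a geodesic sphere centered at $0$.
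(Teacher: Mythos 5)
Your proposal is correct and follows the paper's overall route: apply Proposition \ref{prop rn comp} with $\varphi(x)=x^{m}$ (resp.\ $x^{-m}$), estimate the nested Jensen integrals via Lemma \ref{lem jensen} using the extremal values of $\psi''$ and $\varphi''$ on $\mathrm{Range}(u)\subset(0,r]$ (which for these power weights are all attained at $u=r$), and estimate the $\varphi$-difference of the quermassintegral ratios by the Alexandrov--Fenchel deficit. Your sign bookkeeping and coefficients all check out, and your treatment of the boundary values $m=0$ (limit) and $m=1$ in (ii) (reduction to Theorem \ref{thm rn 1/u}(i)) and of the equality case agrees with the paper.

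The genuinely valuable observation in your write-up concerns the $\varphi$-difference step in part (ii). The paper's proof of (i) bounds $u_2^m-u_1^m=m c^{m-1}(u_2-u_1)$ below via the endpoint $c\le u_2$; the paper's proof of (ii) then says ``we can proceed as in the proof of [(i)]''. But the literal transposition to $\varphi(x)=x^{-m}$, namely $u_1^{-m}-u_2^{-m}=m c^{-m-1}(u_2-u_1)\ge m u_2^{-m-1}(u_2-u_1)$, only yields $u_2^{-m}\cdot\tfrac{I_{k-1}^2-I_kI_{k-2}}{I_{k-1}}$, which is weaker than the stated $u_1^{-m}\cdot\tfrac{I_{k-1}^2-I_kI_{k-2}}{I_{k-1}}=(I_{k-2}/I_{k-1})^{1-m}\tfrac{I_{k-1}^2-I_kI_{k-2}}{I_{k-2}}$ since $u_1\le u_2$. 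You correctly identify that what is actually needed is the tangent-line inequality $t^m\le 1+m(t-1)$ applied at $t=u_1/u_2$ (equivalently $\tfrac{1-s^{-m}}{m}\ge 1-\tfrac1s$ for $s=u_2/u_1\ge 1$), which does deliver the stated coefficient; for part (i) this concavity inequality and the endpoint MVT give the same bound, which is why the issue is invisible there. So your proof is not a different route but a more careful one, and it patches a step that the paper's ``proceed as in (i)'' passes over too quickly.
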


\begin{proof}
The proof of Theorem \ref{thm tan rn} shows that $0$ cannot lie outside $\Sigma$. It cannot lie on $\Sigma$ either, for otherwise $\Sigma$ is strictly supported by a hyperplane though the center of the circumscribed sphere $\Gamma$ and so $\Sigma$ can only touch $\Gamma$ in an open hemisphere. A translation argument as in the proof of Theorem \ref{thm tan rn} then shows that $\Gamma$ is not the smallest sphere enclosing $\Sigma$. Therefore $u>0$ on $\Sigma$.

\textbf{Proof of (i). } Assume first $m>0$. Choose $\varphi(u) = u^m$ in Proposition \ref{prop rn comp}. Note that $\psi(u)=u\varphi(u)=u^{m+1}$ is convex, but $u^m$ is concave on $\{u>0\}$. We have
\begin{equation*}
\min_I \psi^{\prime \prime}=\frac{m(m+1)}{ (\max u)^{1-m}}=\frac{m(m+1)}{ r^{1-m}}\ge0
\end{equation*}
and
\begin{equation*}
\max_I \varphi''=\frac{m(m-1)}{r^{2-m}}\le0.
\end{equation*}
So Proposition \ref{prop rn comp} gives
\begin{align*}
& I_{k-1} \left(\left(\frac{I_{k-1}}{I_k}\right)^m - \left(\frac{I_{k-2}}{I_{k-1}}\right)^m\right)
+\frac{m(m+1)}{2r^{1-m}}
\delta_{2, k}\left(\Omega, B_0\left(\frac{I_{k-1}}{I_k}\right)\right)^2 \\
& \quad +\frac{m(1-m)}{2 r^{2-m}} \delta_{2, k-1}\left(\Omega, B_0\left(\frac{I_{k-2}}{I_{k-1}}\right)\right)^2\\
\le& \frac{m}{k\binom{n}{k}} \int_{\Sigma} \frac{T_{k-1} \circ A\left(X^T, X^T\right)}{u^{1-m}} d \mu.
\end{align*}

Let $u_2=\frac{I_{k-1}}{I_k}$ and $u_1=\frac{I_{k-2}}{I_{k-1}}$. Note that $u_2\ge u_1$ by the Alexandrov-Fenchel inequality \eqref{s1.AF2} for convex hypersurface and so by mean value theorem,
\begin{align*}
u_2^m -u_1^m=\frac{m}{c^{1-m}}\left(u_2-u_1\right)
\ge\frac{m}{\left(\frac{I_{k-1}}{I_k}\right)^{1-m}}\left(u_2-u_1\right)
\end{align*}
for some $c\in[u_1, u_2]$. Putting this into the above inequality and dividing it by $m$, we can get the result.

If $m=0$, we can get the inequality by letting $m\downarrow 0$ in \eqref{s4.convex-4}.

For $0<m{<} 1$, as $\varphi''$ is injective, the equality case follows from the same type of argument as in Theorem \ref{thm rn 1/u}. (If $m=1$, this becomes an identity (Theorem \ref{thm mink deficit}).)

If $m=0$ and the equality holds, then
\begin{align*}
\frac{I_{k-1}^2-I_k I_{k-2}}{I_{k-1}}+\frac{1}{2 r} \delta_{2, k}\left(\Omega, B_0\left(\frac{I_{k-1}}{I_k}\right)\right)^2
+\frac{1}{2 r^2} \delta_{2, k-1}\left(\Omega, B_0\left(\frac{I_{k-2}}{I_{k-1}}\right)\right)^2 \\
=\frac{1}{k\binom{n}{k}} \int_{\Sigma} \frac{T_{k-1} \circ A\left(X^T, X^T\right)}{u} d \mu.
\end{align*}
On the other hand, comparing with \eqref{s4.convex-3} it suffices to show that
\begin{equation*}
I_{k-1} \log \left(\frac{I_{k-1}^2}{I_k I_{k-2}}\right)\ge \frac{I_{k-1}^2-I_k I_{k-2}}{I_{k-1}},
\end{equation*}
as the equality case of \eqref{s4.convex-3} will then imply that $\Sigma$ is a sphere centered at $0$. Let $x=\frac{I_{k-1}^2}{I_k I_{k-2}}-1$. Then $x\ge 0$ by the Alexandrov-Fenchel inequality, and it can be seen that it suffices to show that $$\log \left(1+x\right)\ge \frac{x}{1+x}$$ for $x\ge 0$. This inequality is a straightforward exercise in calculus.

\textbf{Proof of (ii). } Let $\varphi(u)=u^{-m}$. Assume first $m>0$. We can proceed as in the proof of \eqref{s4.convex-4} to first obtain
\begin{align*}
& \frac{m}{k\binom{n}{k}} \int_{\Sigma} \frac{T_{k-1} \circ A\left(X^T, X^T\right)}{u^{m+1}} d \mu\\
\ge& m\left(\frac{I_{k-2}}{I_{k-1}}\right)^{1-m} \frac{I_{k-1}{ }^2-I_{k}I_{k-2}}{I_{k-2}}+\frac{m(1-m)}{2 r^{m+1}} \delta_{2, k}\left(\Omega, B_0\left(\frac{I_{k-1}}{I_k}\right)\right)^2\\
& \quad +\frac{m(m+1)}{2 r^{m+2}} \delta_{2, k-1}\left(\Omega, B_0\left({I_{k-2}}{I_{k-1}}\right)\right)^2.
\end{align*}
Dividing this inequality by $m$, we can get the result.
If we let $m \downarrow 0$, we can obtain the result for $m=0$.

If $0<m\le 1$, as $\varphi''$ is injective, the equality case follows from the same type of argument as in Theorem \ref{thm rn 1/u}.
The equality case of \eqref{s4.convex-4} when $m=0$ also implies the same conclusion holds when $m=0$.
\end{proof}

\section{Stability of a weighted geometric inequality}\label{sec. weight}
In a recent paper \cite{kwong2023geometric}, it was proved among other things that for a closed convex hypersurface $\Sigma=\partial \Omega$ in $\mathbb R^{n+1}$, the following inequality holds:
\begin{align}\label{ineq1}
\frac{n+1}{\omega_n}I_{n-1}^2 - n I_{n-2}\le \int_\Sigma H_{n} |X|^2 d\mu.
\end{align}
Equality holds if and only if $\Sigma$ is a coordinate sphere. This is the $k=n$ case of \cite[Theorem 1.3]{kwong2023geometric}.

This inequality can be derived using multiple approaches. We have already seen in Corollary \ref{thm Hn x2} that it follows from the weighted Minkowski formula, which gives an improved version of the result. On the other hand, both \eqref{ineq1} and its two-dimensional counterpart,
\begin{equation}\label{ineq2}
\frac{1}{\pi}\left({L}^2-2 \pi {A}\right) \le \int_{\Sigma} \kappa |X|^2 d s,
\end{equation}
which was first proved in \cite{KWWW2021}, can also be derived using the inverse curvature flow \cite{kwong2023geometric}.

A natural question arising from these results is the stability of \eqref{ineq1}. Specifically, if the deficit
\begin{align*}
\int_{\Sigma} H_{n}|X|^2 d \mu - \frac{n+1}{\omega_n}I_{n-1}^2 + n I_{n-2}
\end{align*}
is small, to what extent does the hypersurface $\Sigma$ approximate a coordinate sphere in a certain sense of closeness? Furthermore, it is interesting to determine the optimal choice of the origin and radius $r$ that would result in the sphere centered at this point, with a radius of $r$, giving the best approximation of $\Sigma$ in the sense that the deficit in \eqref{ineq1} is minimized.

To address these questions, we introduce a third approach based on the analysis of the support function on the sphere, which, in fact, provides the most precise result. Rather than viewing \eqref{ineq1} purely as an inequality, we will derive an exact formula for its deficit. In particular, we will show that the deficit of this inequality corresponds to the square of the $L^2$ distance between $\Omega$ and a ball whose diameter equals the mean width of $\Omega$. Here, the mean width of $\Omega$ is defined as the integral average, across all directions, of the width of $\Omega$ \cite[(2.4.15)]{Groemer1996}.

\subsection{Stability of the inequality \eqref{ineq1}}
\begin{theorem}\label{thm1}
For a closed convex hypersurface $\Sigma=\partial \Omega$ in $\mathbb R^{n+1}$,
\begin{equation*}
\int_{\Sigma} H_{n} |X|^2 d \mu - \frac{n+1}{\omega_n}I_{n-1}^2 + n I_{n-2}
=(n+1)\delta_2\left(\Omega, B_0(r)\right)^2.
\end{equation*}
Here $B_0(r)$ is the ball centered at 0 with the radius $r=\frac{I_{n-1}}{\omega_n}$, which also equals to the half of the mean width $\overline{w}(\Omega)$ of $\Omega$, and $\delta_2$ is the $L^2$ distance between two convex bodies.
\end{theorem}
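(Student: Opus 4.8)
\textit{Approach.} The plan is to push everything onto the unit sphere via the support function. Write $u:\mathbb{S}^n\to\mathbb{R}$ for the support function of $\Omega$, and parametrize $\Sigma$ by the inverse Gauss map $x:\mathbb{S}^n\to\Sigma$, so that $x(\xi)$ is the point of $\Sigma$ with outward unit normal $\xi$. Differentiating $u(\xi)=\langle x(\xi),\xi\rangle$ along $\mathbb{S}^n$ and using $\partial_v x\perp\xi$ gives the classical decomposition $x(\xi)=u(\xi)\,\xi+\nabla_{\mathbb{S}^n}u(\xi)$, hence $|X|^2=u^2+|\nabla_{\mathbb{S}^n}u|^2$ along $\Sigma$. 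Combined with the change of variables $d\theta=H_n\,d\mu$ (valid since for a strictly convex $\Sigma$ the Gauss map is a diffeomorphism; the general smooth convex case follows by approximation, or by phrasing everything through the surface area measures of $\Omega$), this yields $\int_\Sigma H_n|X|^2\,d\mu=\int_{\mathbb{S}^n}\bigl(u^2+|\nabla_{\mathbb{S}^n}u|^2\bigr)\,d\theta$ and $I_n=\int_\Sigma H_n\,d\mu=\omega_n$. It then remains to rewrite $I_{n-1}$ and $I_{n-2}$ in terms of $u$.

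\textit{Rewriting $I_{n-1}$ and $I_{n-2}$.} Here I would use the standard fact that the principal radii of curvature $1/\kappa_i$ are the eigenvalues of the operator $W:=\nabla^2_{\mathbb{S}^n}u+u\,g_{\mathbb{S}^n}$ (the reverse Weingarten map), so that $\sigma_k(1/\kappa_1,\dots,1/\kappa_n)=\sigma_k(W)=\sigma_{n-k}(\kappa)/\sigma_n(\kappa)$. With the normalization $H_k=\binom{n}{k}^{-1}\sigma_k$, this gives $H_{n-1}/H_n=\sigma_1(W)/n=(\Delta_{\mathbb{S}^n}u+nu)/n$ and $H_{n-2}/H_n=\sigma_2(W)/\binom{n}{2}$. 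Integrating the first identity against $d\theta=H_n\,d\mu$ and using $\int_{\mathbb{S}^n}\Delta_{\mathbb{S}^n}u=0$ yields
\[
I_{n-1}=\int_\Sigma H_{n-1}\,d\mu=\int_{\mathbb{S}^n}\tfrac{H_{n-1}}{H_n}\,d\theta=\int_{\mathbb{S}^n}u\,d\theta=\omega_n\bar u,
\]
so $r=I_{n-1}/\omega_n=\bar u$ is the mean of $u$, which is exactly half the mean width $\overline w(\Omega)=\tfrac{1}{\omega_n}\int_{\mathbb{S}^n}\bigl(u(\xi)+u(-\xi)\bigr)\,d\xi=2\bar u$. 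For $I_{n-2}$ I would expand $\sigma_2(W)=\tfrac12\bigl((\operatorname{tr}W)^2-\operatorname{tr}(W^2)\bigr)$ with $\operatorname{tr}W=\Delta u+nu$ and $\operatorname{tr}(W^2)=|\nabla^2_{\mathbb{S}^n}u|^2+2u\Delta u+nu^2$, then integrate over $\mathbb{S}^n$ using $\int u\,\Delta u=-\int|\nabla u|^2$ together with the integrated Bochner identity $\int_{\mathbb{S}^n}\bigl((\Delta u)^2-|\nabla^2 u|^2\bigr)=\int_{\mathbb{S}^n}\operatorname{Ric}(\nabla u,\nabla u)=(n-1)\int_{\mathbb{S}^n}|\nabla u|^2$. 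A short computation gives $\int_{\mathbb{S}^n}\sigma_2(W)\,d\theta=\tfrac{n-1}{2}\bigl(n\int u^2-\int|\nabla u|^2\bigr)$, hence, since $\binom{n}{2}=\tfrac{n(n-1)}{2}$,
\[
I_{n-2}=\int_{\mathbb{S}^n}u^2\,d\theta-\tfrac1n\int_{\mathbb{S}^n}|\nabla_{\mathbb{S}^n}u|^2\,d\theta.
\]

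\textit{Conclusion and main obstacle.} Substituting the three formulas into the left-hand side of the theorem, the terms carrying $|\nabla_{\mathbb{S}^n}u|^2$ cancel exactly (the $+\int|\nabla u|^2$ from $|X|^2$ against the $-\int|\nabla u|^2$ inside $n I_{n-2}$), leaving, with $\int u=\omega_n\bar u$,
\[
\int_\Sigma H_n|X|^2\,d\mu-\tfrac{n+1}{\omega_n}I_{n-1}^2+nI_{n-2}=(n+1)\Bigl(\int_{\mathbb{S}^n}u^2\,d\theta-\tfrac{1}{\omega_n}\bigl(\int_{\mathbb{S}^n}u\,d\theta\bigr)^2\Bigr)=(n+1)\int_{\mathbb{S}^n}(u-\bar u)^2\,d\theta.
\]
Since the support function of $B_0(r)$ is the constant $r=\bar u$, the last expression equals $(n+1)\|u_\Omega-u_{B_0(r)}\|_{L^2(\mathbb{S}^n)}^2=(n+1)\delta_2(\Omega,B_0(r))^2$, which is the claimed identity. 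I expect the main obstacle to be the computation of $I_{n-2}$: getting the $\sigma_2$ expansion and the integration by parts (Bochner on $\mathbb{S}^n$) correct, and carefully tracking the $\binom{n}{k}$ normalizations; once these are in hand, the remaining manipulations are pure bookkeeping. A secondary point to address is regularity, handled by assuming strict convexity (so $u\in C^2$ and the Gauss map is a diffeomorphism) and then passing to the general smooth convex case by approximation.
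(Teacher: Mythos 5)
Your proposal is correct and follows essentially the same route as the paper: parametrize $\Sigma$ via the support function so that $|X|^2=u^2+|\nabla_{\mathbb{S}^n}u|^2$, express $I_{n-1}$ and $I_{n-2}$ as integrals of $u$ and $|\nabla u|^2$ over $\mathbb{S}^n$, and observe the cancellation of the gradient terms to leave $(n+1)\int(u-\bar u)^2$. The only difference is that you derive the identity $I_{n-2}=\int u^2-\tfrac1n\int|\nabla u|^2$ from the $\sigma_2(W)$ expansion plus Bochner, whereas the paper cites the equivalent formula $I_{n-2}=\tfrac1n\int u(\Delta u+nu)\,d\theta$ as well known and integrates by parts once; this is a presentational difference, not a different argument.
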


\begin{proof}
Let $\nabla$, $\Delta$, and $d\theta$ be the gradient, the Laplacian, and the measure with respect to the standard metric on the unit sphere $\mathbb S^{n}\subset \mathbb R^{n+1}$. Let $u$ be the support function of $\Sigma$.
It is known that $\Sigma$ can be parametrized by $u(\xi)\xi+\nabla u(\xi)$, $\xi\in \mathbb S^{n}$, and so $|X|^2=u^2+|\nabla u|^2$. We also have the following well-known formulas:
\begin{equation*}
d \theta = H_{n} d\mu, \quad I_{n-1}=\int_{\Sigma} H_{n-1} d \mu = \int_{\mathbb S^{n}} u d \theta
\end{equation*}
and
\begin{align*}
& I_{n-2}=\int_{\Sigma} H_{n-2} d \mu = \frac{1}{n} \int_{\mathbb S^{n}} u(\Delta u + n u) d \theta
\\
& \qquad=\frac{1}{n} \int_{\mathbb S^{n}} (n u^2 - |\nabla u|^2) d \theta.
\end{align*}
Therefore
\begin{equation*}
\begin{aligned}
& \int_{\Sigma} H_{n} |X|^2 d\mu - \frac{n+1}{\omega_n}I_{n-1}^2 + nI_{n-2} \\
=& \int_{\mathbb S^{n}}\left(u^2 + |\nabla u|^2\right) d\theta - \frac{n+1}{\omega_n} \left(\int_{\mathbb S^{n}} u d\theta \right)^2 + \int_{\mathbb S^{n}} \left(n u^2 - |\nabla u|^2 \right) d\theta \\
=& (n+1) \left(\int_{\mathbb S^{n}} u^2 d \theta - \frac{1}{\omega_n} \left(\int_{\mathbb S^{n}} u d \theta \right)^2 \right) \\
=& (n+1) \int_{\mathbb S^{n}} (u - \overline{u})^2 d \theta
\end{aligned}
\end{equation*}
where $\overline{u}=\frac{1}{|\mathbb S^{n}|}\int_{\mathbb S^{n}}u d\theta$. On the other hand \cite[Sec. 2.4]{Groemer1996},
\begin{equation*}
\overline{w}(\Omega)=\frac{2}{|\mathbb S^{n}|}\int_{\mathbb S^{n}} u d \theta=\frac{2}{\omega_n}\int_\Sigma H_n u d\mu=\frac{2I_{n-1}}{\omega_n}.
\end{equation*}
Therefore
\begin{equation*}
\int_{\mathbb S^{n}}(u-\overline{u})^2 d \theta = \int_{\mathbb S^{n}}(u - u_{B_0(r)})^2 d \theta = \delta_2(\Omega, B_0(r))^2,
\end{equation*}
where $B_0(r)$ is the ball centered at $0$ with the radius $r$ equals to the half of the mean width $\overline{w}(\Omega)$ of $\Omega$ and $\delta_2$ is the $L^2$ distance between two convex bodies.
\end{proof}

Theorem \ref{thm1} serves a dual purpose: it strengthens inequality \eqref{ineq1}, offering a new proof of a result in \cite{kwong2023geometric}, while also characterizing its deficit: it measures the $L^2$ distance between the convex set $\Omega$ and a ball that shares the same mean width.

In the inequality \eqref{ineq1}, clearly the only quantity which depends on the choice of the origin is the weighted curvature integral $\int_{\Sigma} H_{n} |X|^2 d \mu$. It is therefore interesting to look for a choice of the origin which minimizes this integral so as to minimize the deficit. Theorem \ref{thm1} along with the minimization property of the ``Steiner ball'' provides a simple characterization of the optimal origin choice: it turns out the choice is the Steiner point of $\Omega$.

Recall that the Steiner point is defined by
$$
z(\Omega)=\frac{1}{|\mathbb B^{n+1}|} \int_{\mathbb S^{n}} u_\Omega(\xi) \xi\, d \theta\in \mathbb R^{n+1}.
$$
The ball whose center is the Steiner point and whose diameter is the mean width of $\Omega$ will be called the Steiner ball of $\Omega$ and denoted by $B_z(\Omega)$.
\begin{proposition}[{\cite[Proposition 5.1.2]{Groemer1996}}]
If $U$ is an arbitrary ball in $\mathbb R^{n+1}$ and $B_z(\Omega)$ is the Steiner ball of $\Omega$, then
$$
\delta_2(\Omega, U)^2 = \delta_2\left(\Omega, B_z(\Omega)\right)^2 + \delta_2\left(B_z(\Omega), U\right)^2.
$$
Hence, $B_z(\Omega)$ is the unique ball that minimizes $\delta_2(\Omega, U)$, considered as a function of $U$.
\end{proposition}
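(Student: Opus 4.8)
The plan is to reduce everything to an orthogonal projection in the Hilbert space $L^2(\mathbb S^n, d\theta)$, in which the Steiner ball appears precisely as the projection of $u_\Omega$ onto the subspace of support functions of balls; the asserted identity is then the Pythagorean theorem, and uniqueness follows because a convex body is determined by its support function.

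First I would recall from \eqref{s3. defL2} that $\delta_2(K_1,K_2)=\|u_{K_1}-u_{K_2}\|_{L^2(\mathbb S^n)}$, so that all the quantities in question live in $L^2(\mathbb S^n)$ (support functions of bounded convex bodies are Lipschitz on $\mathbb S^n$, hence square-integrable). The support function of the ball of center $c\in\mathbb R^{n+1}$ and radius $\rho\ge 0$ is $\xi\mapsto\langle c,\xi\rangle+\rho$, so the support functions of balls are exactly those elements of $V:=\mathrm{span}\{1,\xi_1,\dots,\xi_{n+1}\}\subset L^2(\mathbb S^n)$ (the spherical harmonics of degree $\le 1$) whose constant term is nonnegative; the constant term is the radius, and the degree-one part encodes the center.

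Next I would identify the $L^2(\mathbb S^n)$-orthogonal projection $P_V u_\Omega$ of $u_\Omega$ onto $V$ with the support function of the Steiner ball $B_z(\Omega)$. Since $1,\xi_1,\dots,\xi_{n+1}$ are mutually orthogonal in $L^2(\mathbb S^n)$, one has $P_V u_\Omega=\bar u\cdot 1+\sum_{i=1}^{n+1}\frac{\langle u_\Omega,\xi_i\rangle}{\|\xi_i\|_{L^2(\mathbb S^n)}^2}\,\xi_i$, where $\bar u=\frac{1}{\omega_n}\int_{\mathbb S^n}u_\Omega\,d\theta=\frac{I_{n-1}}{\omega_n}=r$ is precisely the half mean width (as in the proof of Theorem \ref{thm1}). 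For the degree-one coefficients I would use $\|\xi_i\|_{L^2(\mathbb S^n)}^2=\frac{1}{n+1}\int_{\mathbb S^n}|\xi|^2\,d\theta=\frac{\omega_n}{n+1}=|\mathbb B^{n+1}|$, which gives $\frac{\langle u_\Omega,\xi_i\rangle}{\|\xi_i\|_{L^2(\mathbb S^n)}^2}=\frac{1}{|\mathbb B^{n+1}|}\int_{\mathbb S^n}u_\Omega(\xi)\,\xi_i\,d\theta=z_i(\Omega)$. Hence $P_V u_\Omega(\xi)=\langle z(\Omega),\xi\rangle+r=u_{B_z(\Omega)}(\xi)$; in particular $B_z(\Omega)$ is a genuine ball, since $r>0$ unless $\Omega$ is a single point.

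Finally, for an arbitrary ball $U$ the function $u_U$ lies in $V$, so $u_\Omega-P_V u_\Omega$ is orthogonal to $P_V u_\Omega-u_U$ in $L^2(\mathbb S^n)$, and the Pythagorean theorem yields $\|u_\Omega-u_U\|^2=\|u_\Omega-P_V u_\Omega\|^2+\|P_V u_\Omega-u_U\|^2$, which upon rewriting each norm via $\delta_2$ is exactly the claimed identity $\delta_2(\Omega,U)^2=\delta_2(\Omega,B_z(\Omega))^2+\delta_2(B_z(\Omega),U)^2$. The right-hand side is minimized if and only if $\delta_2(B_z(\Omega),U)=0$, i.e. $u_U=u_{B_z(\Omega)}$ on $\mathbb S^n$, which forces $U=B_z(\Omega)$; thus $B_z(\Omega)$ is the unique minimizer. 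I do not expect a genuine obstacle here: the only point requiring care is the normalization $\|\xi_i\|_{L^2(\mathbb S^n)}^2=|\mathbb B^{n+1}|$ (equivalently $\omega_n=(n+1)|\mathbb B^{n+1}|$), which is precisely what makes the degree-one part of $P_V u_\Omega$ match the definition of the Steiner point $z(\Omega)$, together with the harmless observation that enlarging the cone of ball-support-functions to the full subspace $V$ changes nothing because $P_V u_\Omega$ automatically has positive constant part.
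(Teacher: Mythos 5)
Your proposal is correct. The paper itself offers no proof of this proposition; it cites Groemer's Proposition 5.1.2, and your derivation supplies the standard argument behind that citation: identify $\delta_2$ with the $L^2(\mathbb S^n)$ norm of the difference of support functions, observe that support functions of balls span exactly the subspace $V$ of spherical harmonics of degree $\le 1$ (restricted to those with nonnegative constant term), compute that the orthogonal projection $P_V u_\Omega$ has constant term $r=\frac{I_{n-1}}{\omega_n}$ and degree-one coefficients $\frac{1}{|\mathbb B^{n+1}|}\int_{\mathbb S^n}u_\Omega(\xi)\,\xi_i\,d\theta=z_i(\Omega)$, and apply the Pythagorean identity. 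The computation $\|\xi_i\|^2_{L^2(\mathbb S^n)}=\frac{\omega_n}{n+1}=|\mathbb B^{n+1}|$ is the crucial normalization and you handled it correctly, and your observation that passing from the cone of ball support functions to the full subspace $V$ is harmless (since $P_V u_\Omega$ lands in the cone because $r>0$) is the one small point of rigor that is easy to overlook. This is precisely the spherical-harmonics argument Groemer uses, so there is no divergence in approach to report.
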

\begin{corollary}\label{cor steiner pt}
For a closed convex hypersurface $\Sigma=\partial \Omega$ in $\mathbb{R}^{n+1}$, the weighted curvature integral
$\int_{\Sigma} H_{n}|X-X_0|^2 d \mu(X)$ regarded as a function in $X_0\in \mathbb R^{n+1}$ is minimized at the Steiner point $z(\Omega)$. In this case,
$$\int_{\Sigma} H_{n}\left|X-z(\Omega)\right|^2 d \mu = \frac{n+1}{\omega_n}I_{n-1}^2 - n I_{n-2} + (n+1) \delta_2\left(\Omega, B_z(\Omega)\right)^2. $$
\end{corollary}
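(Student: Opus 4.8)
The plan is to reduce the minimization to a statement about $L^2$ distances to balls and then invoke the minimizing property of the Steiner ball quoted above. The key observation is that for any point $X_0 \in \mathbb{R}^{n+1}$, translating the origin to $X_0$ and applying Theorem \ref{thm1} yields
\[
\int_{\Sigma} H_n |X - X_0|^2 \, d\mu = \frac{n+1}{\omega_n} I_{n-1}^2 - n I_{n-2} + (n+1)\, \delta_2\bigl(\Omega, B_{X_0}(r)\bigr)^2,
\]
where $B_{X_0}(r)$ denotes the ball of radius $r = I_{n-1}/\omega_n = \overline{w}(\Omega)/2$ centered at $X_0$. Here one uses that under a translation of the origin the support function changes by a linear functional, so the quantities $I_{n-1}$, $I_{n-2}$ and the mean width $\overline{w}(\Omega)$ — all translation invariant — are unchanged, while the position vector with respect to the new origin is exactly $X - X_0$, and $|X-X_0|^2 = u_{X_0}^2 + |\nabla u_{X_0}|^2$ as in the proof of Theorem \ref{thm1}.

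First I would record this identity and note that the first two terms on the right-hand side do not depend on $X_0$. Consequently the functions $X_0 \mapsto \int_\Sigma H_n|X-X_0|^2\,d\mu$ and $X_0 \mapsto \delta_2(\Omega, B_{X_0}(r))^2$ have the same minimizers. Next I would invoke \cite[Proposition 5.1.2]{Groemer1996}: among all balls $U \subset \mathbb{R}^{n+1}$, the distance $\delta_2(\Omega, U)$ is uniquely minimized by the Steiner ball $B_z(\Omega)$, whose center is $z(\Omega)$ and whose radius is half the mean width, i.e. exactly $r$. In particular $B_z(\Omega)$ lies in the family $\{B_{X_0}(r) : X_0 \in \mathbb{R}^{n+1}\}$, hence it is a fortiori the unique minimizer of $\delta_2(\Omega, B_{X_0}(r))$ within that family; that is, the minimum over $X_0$ is attained precisely at $X_0 = z(\Omega)$. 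Substituting $X_0 = z(\Omega)$ into the displayed identity then gives
\[
\int_{\Sigma} H_n |X - z(\Omega)|^2 \, d\mu = \frac{n+1}{\omega_n} I_{n-1}^2 - n I_{n-2} + (n+1)\, \delta_2\bigl(\Omega, B_z(\Omega)\bigr)^2,
\]
which is the claimed formula.

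I do not expect a serious obstacle; the only point requiring a little care is the bookkeeping in the translation — checking that the radius $r$ appearing in Theorem \ref{thm1} is translation invariant (it equals $I_{n-1}/\omega_n$ with $I_{n-1}$ a quermassintegral, equivalently half the mean width) so that the whole family of competitor balls genuinely has the single fixed radius $r$, and that the Steiner ball's radius coincides with this $r$, so the unconstrained minimizer of \cite[Proposition 5.1.2]{Groemer1996} indeed lies in the constrained family. One can sidestep even this by applying the orthogonal decomposition of \cite[Proposition 5.1.2]{Groemer1996} directly with $U = B_{X_0}(r)$, which gives $\delta_2(\Omega, B_{X_0}(r))^2 = \delta_2(\Omega, B_z(\Omega))^2 + \delta_2(B_z(\Omega), B_{X_0}(r))^2 \ge \delta_2(\Omega, B_z(\Omega))^2$, with equality if and only if $B_{X_0}(r) = B_z(\Omega)$, i.e. if and only if $X_0 = z(\Omega)$.
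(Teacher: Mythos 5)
Your proposal is correct and follows the same approach the paper intends: translate the origin to $X_0$ and apply Theorem \ref{thm1} (noting translation invariance of $I_{n-1}$, $I_{n-2}$, and $\delta_2$), then invoke \cite[Proposition 5.1.2]{Groemer1996} to identify the Steiner ball as the unique minimizing ball of radius $r=\overline w(\Omega)/2$. The paper leaves this argument implicit, but your writeup — including the alternative route via the orthogonal decomposition — fills in exactly the intended reasoning.
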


\medskip
Let $K$ and $L$ be two convex bodies in $\mathbb R^{n+1}$ and let $D$ denote the diameter of $K \cup L$. It is known that the Hausdorff distance $\delta$ and the $L^2$ distance are related by the following inequality (see \cite[Proposition 2.3.1]{Groemer1996}):
$$
c_n D^{-n} \delta(K, L)^{n+2} \le \delta_2(K, L)^2,
$$
where $c_n=\frac{2 |\mathbb B^{n}|}{ (n+1)(n+2)}$.

In view of this result, we can regard the deficit in \eqref{ineq1} as a control of the Hausdorff distance as well.
\begin{corollary}
For any $\varepsilon > 0$ and $D > 0$, there exists $\eta = \eta(\varepsilon, D, n)$ such that if $\Omega$ is a smooth convex body in $\mathbb{R}^{n+1}$ satisfying the following conditions:
\begin{enumerate}
\item
the diameter of $\Omega$ is at most $D$, and
\item
the deficit $\int_{\Sigma} H_{n} |X-z(\Omega)|^2 d \mu - \frac{n+1}{\omega_n}I_{n-1}^2 + nI_{n-2} < \eta$,
\end{enumerate}
then the Hausdorff distance between $\Omega$ and $B_{z}(\Omega)$ is at most $\varepsilon$.
\end{corollary}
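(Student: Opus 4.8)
The plan is to chain the exact deficit formula of Corollary \ref{cor steiner pt} with the Hausdorff--$L^2$ comparison inequality recalled just above the statement. Assume $\Omega$ is a smooth convex body satisfying hypotheses (1) and (2). By Corollary \ref{cor steiner pt},
\begin{equation*}
\int_{\Sigma} H_{n}\left|X-z(\Omega)\right|^2 d \mu - \frac{n+1}{\omega_n}I_{n-1}^2 + n I_{n-2} = (n+1)\,\delta_2\left(\Omega, B_z(\Omega)\right)^2,
\end{equation*}
so hypothesis (2) yields $\delta_2\left(\Omega, B_z(\Omega)\right)^2 < \eta/(n+1)$.

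Next I would bound the diameter $D'$ of $\Omega \cup B_z(\Omega)$ in terms of $D$. The Steiner point $z(\Omega)$ lies in $\Omega$ (a standard fact for convex bodies), and it is the center of $B_z(\Omega)$, so $z(\Omega) \in \Omega \cap B_z(\Omega)$. Moreover the radius of $B_z(\Omega)$ equals half the mean width $\overline{w}(\Omega)$, which is at most half the diameter of $\Omega$, hence $\operatorname{diam} B_z(\Omega) \le D$. Since two sets of diameter at most $D$ sharing a common point have union of diameter at most $2D$, we get $D' \le 2D$.

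Now apply the inequality $c_n (D')^{-n}\,\delta(K,L)^{n+2} \le \delta_2(K,L)^2$ with $K=\Omega$, $L=B_z(\Omega)$, and $c_n=\tfrac{2|\mathbb B^{n}|}{(n+1)(n+2)}$:
\begin{equation*}
\delta\left(\Omega, B_z(\Omega)\right)^{n+2} \le \frac{(D')^{n}}{c_n}\,\delta_2\left(\Omega, B_z(\Omega)\right)^2 < \frac{(2D)^{n}}{c_n}\cdot\frac{\eta}{n+1}.
\end{equation*}
It therefore suffices to take $\eta = \eta(\varepsilon, D, n) = \dfrac{c_n(n+1)}{(2D)^{n}}\,\varepsilon^{n+2}$, which forces $\delta\left(\Omega, B_z(\Omega)\right) \le \varepsilon$.

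The proof is essentially a direct combination of the two quoted results, so I do not anticipate any genuine difficulty. The only point needing slight care is the diameter bound for $\Omega \cup B_z(\Omega)$, where one uses $z(\Omega)\in\Omega$ together with $\overline{w}(\Omega)\le\operatorname{diam}\Omega$; any rougher bound of the form $D' \le C_n D$ would do equally well and merely changes the explicit constant in $\eta$.
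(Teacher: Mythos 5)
Your proof is correct and is the intended argument: it combines the exact deficit formula at the Steiner point (Corollary \ref{cor steiner pt}) with Groemer's Hausdorff--$L^2$ comparison, using the clean bound $\operatorname{diam}\left(\Omega \cup B_z(\Omega)\right) \le 2D$ obtained from $z(\Omega)\in\Omega$ and $\overline{w}(\Omega)\le\operatorname{diam}\Omega$. The paper states this corollary without proof, and your argument supplies exactly the expected details, even producing an explicit $\eta(\varepsilon,D,n)$.
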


\subsection{Comparison with the isoperimetric inequality}
In this section, we show that within the class of convex bodies, the inequality \eqref{ineq1} is stronger than the classical isoperimetric inequality. Theorem \ref{thm1} gives a connection between the deficit in \eqref{ineq1} and the $L^2$ distance between $\Omega$ and its Steiner ball. Combining this with the known result that there is a lower bound for the isoperimetric deficit involving the $L^2$ distance $\delta_2(\Omega, B_z(\Omega))$, we are able to compare the inequality \eqref{ineq1} with the isoperimetric inequality.

For a smooth convex body $\Omega \subset \mathbb R^{n+1}$, we let $\Phi(\Omega)$ denote the isoperimetric deficit of $\Omega$, defined by
$$
\Phi(\Omega):=\left(\frac{ |\partial \Omega| }{|\mathbb S^{n}|}\right)^{n+1} - \left(\frac{ |\Omega| }{|\mathbb B^{n+1}|}\right)^n.
$$
The following result is a strengthened version of the isoperimetric inequality that can be interpreted as stability result in terms of the $L^2$ metric.

\begin{theorem}[{\cite[Theorem 5.3.1]{Groemer1996}}]
If $\Omega$ is a smooth convex body in $\mathbb R^{n+1}$, then
$$
\Phi(\Omega) \ge \eta_n \frac{ |\Omega|^{n} }{ I_{n-2}(\Omega) } \delta_2\left(\Omega, B_z(\Omega)\right)^2
$$
where $\eta_n=\frac{n+2}{n|\mathbb B^{n+1}|^{n}}$.
\end{theorem}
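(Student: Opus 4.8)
The plan is to use the expansion of the support function $u$ of $\Omega$, regarded as a function on $\mathbb{S}^n$, into spherical harmonics, $u=\sum_{j\ge 0}Q_j$ with $\Delta_{\mathbb{S}^n}Q_j=-j(j+n-1)Q_j$, together with the Alexandrov--Fenchel inequalities of Theorem A. First I would record that the degree-zero part is $Q_0\equiv\overline{u}=\tfrac12\overline{w}(\Omega)$ (half the mean width), and that the degree-one part of $u$ is exactly the support function $\langle z(\Omega),\cdot\rangle$ of the Steiner point $z(\Omega)$; hence the support function of the Steiner ball is $u_{B_z(\Omega)}=\overline{u}+Q_1$, so that by Parseval's identity (all $L^2$ norms being taken over $\mathbb{S}^n$ with respect to $d\theta$)
\begin{equation*}
\delta_2\bigl(\Omega,B_z(\Omega)\bigr)^2=\bigl\|u-(\overline{u}+Q_1)\bigr\|_{L^2(\mathbb{S}^n)}^2=\sum_{j\ge 2}\|Q_j\|_{L^2(\mathbb{S}^n)}^2 .
\end{equation*}

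The core of the argument is a sharp ``one link'' stability estimate. Starting from the identities $I_{n-1}=\int_{\mathbb{S}^n}u\,d\theta$ and $nI_{n-2}=\int_{\mathbb{S}^n}(nu^2-|\nabla u|^2)\,d\theta$ recorded in the proof of Theorem \ref{thm1}, using $\omega_n=|\mathbb{S}^n|$ and the Parseval identities $\int_{\mathbb{S}^n}u^2\,d\theta=\sum_j\|Q_j\|^2$, $\int_{\mathbb{S}^n}|\nabla u|^2\,d\theta=\sum_j j(j+n-1)\|Q_j\|^2$, I would obtain
\begin{equation*}
\frac{I_{n-1}^2}{\omega_n}-I_{n-2}=\frac1n\sum_{j\ge 2}(j-1)(j+n)\,\|Q_j\|_{L^2(\mathbb{S}^n)}^2\ \ge\ \frac{n+2}{n}\,\delta_2\bigl(\Omega,B_z(\Omega)\bigr)^2 ,
\end{equation*}
the last inequality being just $(j-1)(j+n)\ge n+2$ for $j\ge 2$ combined with the first display. (Both sides are translation invariant, so no origin need be fixed; and the fact that the weight $(j-1)(j+n)$ attains its minimum value $n+2$ exactly at the lowest admissible mode $j=2$ is what makes the constant, hence $\eta_n$, sharp.)

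Finally I would relate the left side above to $\Phi(\Omega)$ through the Alexandrov--Fenchel chain. Putting $\rho_m:=I_m/I_{m-1}$, Theorem A says precisely that $\rho_0\ge\rho_1\ge\dots\ge\rho_{n-1}$; telescoping the definitions ($I_{n-2}=I_{-1}\rho_0\cdots\rho_{n-2}$, $I_{n-1}=I_{-1}\rho_0\cdots\rho_{n-1}$) yields
\begin{equation*}
\frac{I_0^{n+1}I_{n-2}}{I_{-1}^{n}I_{n-1}^{2}}=\frac{\rho_0^{n}}{\rho_1\cdots\rho_{n-2}\,\rho_{n-1}^{2}}\ \ge\ 1 ,
\end{equation*}
since the denominator on the right is a product of $n$ factors each at most $\rho_0$ (for $n=1$ this is an identity). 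Using $I_0=|\partial\Omega|$, $I_{-1}=(n+1)|\Omega|$ and $\omega_n=(n+1)|\mathbb{B}^{n+1}|$ one has $\Phi(\Omega)=\omega_n^{-(n+1)}\bigl(|\partial\Omega|^{n+1}-(n+1)^n\omega_n|\Omega|^n\bigr)$; substituting $|\partial\Omega|^{n+1}\ge(n+1)^n|\Omega|^nI_{n-1}^2/I_{n-2}$ from the previous display and then the one-link estimate gives
\begin{equation*}
\Phi(\Omega)\ \ge\ \frac{(n+1)^n|\Omega|^n}{\omega_n^{n}\,I_{n-2}}\Bigl(\frac{I_{n-1}^2}{\omega_n}-I_{n-2}\Bigr)\ \ge\ \frac{(n+2)(n+1)^n}{n\,\omega_n^{n}}\cdot\frac{|\Omega|^n}{I_{n-2}}\,\delta_2\bigl(\Omega,B_z(\Omega)\bigr)^2 ,
\end{equation*}
which is the claim, since $\eta_n=\frac{n+2}{n|\mathbb{B}^{n+1}|^n}=\frac{(n+2)(n+1)^n}{n\,\omega_n^{n}}$.

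I expect the only genuine work to lie in matching the various normalizing constants (between the curvature integrals $I_k$, the spherical integrals of $u$, and $|\mathbb{B}^{n+1}|$); given those, the second step is a routine Parseval computation and the third an elementary inequality for the monotone ratios $\rho_m$, the Alexandrov--Fenchel inequality itself being the one nontrivial ingredient, which we assume. The constant is sharp for the structural reason that $I_{n-2}$ is the first curvature integral quadratic in $u$, so routing the isoperimetric deficit through the single link $I_{n-1}^2\ge\omega_nI_{n-2}$ is lossless at the worst mode; equality throughout forces $Q_j=0$ for all $j\ge 2$ and all $\rho_m$ equal, i.e. $\Omega$ a ball, for which both sides vanish.
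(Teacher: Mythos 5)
The paper states this as a cited result (Groemer, Theorem 5.3.1) and does not reproduce a proof, so there is no in-paper argument to compare against; your job here was essentially to reconstruct Groemer's original proof, and you have done so correctly. All three steps check out: the identification of $Q_0+Q_1$ with the support function of the Steiner ball (using that $\int_{\mathbb S^n}\xi_i^2\,d\theta=\omega_n/(n+1)=|\mathbb B^{n+1}|$), the Parseval identity
\begin{equation*}
\frac{I_{n-1}^2}{\omega_n}-I_{n-2}=\frac1n\sum_{j\ge 2}\bigl(j(j+n-1)-n\bigr)\|Q_j\|^2=\frac1n\sum_{j\ge 2}(j-1)(j+n)\|Q_j\|^2,
\end{equation*}
in which the $j=0$ and $j=1$ modes cancel exactly (consistent with translation invariance of both sides), and the telescoping of the Alexandrov--Fenchel chain $\rho_0\ge\rho_1\ge\cdots\ge\rho_{n-1}$ into $I_0^{n+1}I_{n-2}\ge I_{-1}^nI_{n-1}^2$, which when substituted into $\Phi(\Omega)=\omega_n^{-(n+1)}(I_0^{n+1}-\omega_nI_{-1}^n)$ produces the factor $\frac{(n+1)^n|\Omega|^n}{\omega_n^nI_{n-2}}=\frac{1}{|\mathbb B^{n+1}|^nI_{n-2}}|\Omega|^n$ and hence the constant $\eta_n=\frac{n+2}{n|\mathbb B^{n+1}|^n}$. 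The argument is the standard spherical-harmonics stability proof in the spirit of Groemer's book; no gaps.
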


\begin{corollary}\label{cor isop}
Let $\Sigma=\partial \Omega$ be a closed smooth convex hypersurface in $\mathbb{R}^{n+1}$. Assume (by translation) that the Steiner point of $\Omega$ is $0$, then
\begin{align*}
& \int_{\Sigma} H_{n}|X|^2 d \mu - \frac{n+1}{\omega_n}I_{n-1}^2 + n I_{n-2} \\
\le&
\frac{(n+1)I_{n-2}(\Omega)}{\eta_n |\Omega|^{n}} \left[\left(\frac{|\Sigma|}{\left|\mathbb S^{n}\right|} \right)^{n+1} - \left(\frac{ |\Omega| }{ \left| \mathbb B^{n+1} \right| } \right)^n \right].
\end{align*}
\end{corollary}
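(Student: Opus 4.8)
The plan is to combine the exact formula for the deficit established in Theorem \ref{thm1} with the stability version of the isoperimetric inequality quoted immediately above as \cite[Theorem 5.3.1]{Groemer1996}. The key observation is that both statements feature the same quantity $\delta_2(\Omega, B_z(\Omega))^2$, so one simply eliminates it between the two.

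First I would invoke Theorem \ref{thm1}. Since we have translated so that the Steiner point $z(\Omega)$ equals $0$, the reference ball $B_0(r)$ with $r = \tfrac{I_{n-1}}{\omega_n} = \tfrac12 \overline{w}(\Omega)$ appearing in Theorem \ref{thm1} is precisely the Steiner ball $B_z(\Omega)$ (recall the Steiner ball is the ball centered at the Steiner point whose diameter is the mean width). Therefore Theorem \ref{thm1} reads
\begin{equation*}
\int_{\Sigma} H_{n} |X|^2 \, d\mu - \frac{n+1}{\omega_n} I_{n-1}^2 + n I_{n-2} = (n+1)\,\delta_2\bigl(\Omega, B_z(\Omega)\bigr)^2.
\end{equation*}
Next I would apply the Groemer stability inequality \cite[Theorem 5.3.1]{Groemer1996}, namely
\begin{equation*}
\Phi(\Omega) = \left(\frac{|\Sigma|}{|\mathbb{S}^n|}\right)^{n+1} - \left(\frac{|\Omega|}{|\mathbb{B}^{n+1}|}\right)^n \ge \eta_n \frac{|\Omega|^n}{I_{n-2}(\Omega)} \, \delta_2\bigl(\Omega, B_z(\Omega)\bigr)^2,
\end{equation*}
which rearranges to $\delta_2(\Omega, B_z(\Omega))^2 \le \frac{I_{n-2}(\Omega)}{\eta_n |\Omega|^n}\,\Phi(\Omega)$. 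Substituting this bound into the right-hand side of the displayed identity yields exactly the claimed inequality, with the factor $(n+1)$ carried through.

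The argument is essentially a two-line chaining, so there is no serious obstacle; the only point requiring a moment of care is the identification of $B_0(r)$ with the Steiner ball $B_z(\Omega)$ under the normalization that the Steiner point is at the origin — this is immediate from the definitions since $r = \tfrac12\overline{w}(\Omega)$ in Theorem \ref{thm1} matches the radius of $B_z(\Omega)$, and the center is $0 = z(\Omega)$ by assumption. One should also note that $I_{n-2}(\Omega) > 0$ and $|\Omega| > 0$ for a convex body, so the division causes no issue.
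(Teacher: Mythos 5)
Your proof is correct and is essentially the same argument the paper intends: identify $B_0(r)$ with the Steiner ball $B_z(\Omega)$ under the normalization $z(\Omega)=0$ (valid since Theorem \ref{thm1} gives $r=\tfrac12\overline{w}(\Omega)$), then eliminate $\delta_2(\Omega,B_z(\Omega))^2$ between the exact deficit formula and Groemer's stability inequality. Nothing is missing.
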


\section{Generalizations}\label{sec others}

In this section, we discuss some generalizations when the ambient manifold is a space form or a warped product manifold.

\subsection{The space form case}
Let $\Sigma=\partial \Omega$ be a closed, smooth hypersurface in space form $\bar M^{n+1}(K)$, where $K=\pm 1$. We define a weighted curvature integral
\begin{equation*}
    I_{k, m}=\int_{\Sigma} \alpha^m H_k d\mu,
\end{equation*}
where $\alpha=\cosh r$ for $K=-1$ and $\alpha=\cos r$ for $K=1$. We also define $\widetilde{J}_{k}=\int_{\Sigma} u \alpha H_{k+1} d \mu$.
\begin{theorem}\label{s6.thm1}
Let $\Sigma=\partial \Omega$ be a smooth, closed hypersurface in $\bar M^{n+1}(K)$ with $H_k >0$. (If $K=1$, assume $\bar M^{n+1}(1)$ is the open hemisphere.) Let $\varphi$ be a $C^2$ function on $\mathrm{Range}(u)$ and define $\psi(t)=t \varphi(t)$.

(i) If $K=1$ and $\varphi$ is non-decreasing, then
\begin{equation*}
\begin{aligned}
& I_{k-1,1}\left(\varphi\left(\frac{I_{k-1,1}}{I_k}\right)-\varphi\left(\frac{I_{k-2,2}}{I_{k-1,1}}\right)\right) \\
& +\int_{\Sigma}\left(\int_{\frac{I_{k-1,1}}{I_k}}^{u(x)} H_k(x) \psi^{\prime \prime}(t)-\int_{\frac{\widetilde{J}_{k-2}}{I_{k-1,1}}}^{u(x)} \alpha(x) H_{k-1}(x) \varphi^{\prime \prime}(t)\right)(u(x)-t) d t d \mu(x)\\
&\le\frac{1}{k\binom{n}{k}} \int_{\Sigma} \varphi^{\prime}(u) T_{k-1} \circ A\left(X^T, X^T\right)d\mu.
\end{aligned}
\end{equation*}
(ii) If $K=-1$ and $\varphi$ is subadditive, then
\begin{align*}
& I_{k-1,1}\left(\varphi\left(\frac{I_{k-1,1}}{I_k}\right)-\varphi\left(\frac{I_{k-2,2}}{I_{k-1,1}}\right)\right) \\
& +\int_{\Sigma}\left(\int_{\frac{I_{k-1,1}}{I_k}}^{u(x)} H_k(x) \psi^{\prime \prime}(t)-\int_{\frac{\widetilde{J}_{k-2}}{I_{k-1,1}}}^{u(x)} \alpha(x) H_{k-1}(x) \varphi^{\prime \prime}(t)\right)(u(x)-t) d t d \mu(x) \\
&\le\frac{1}{k\binom{n}{k}} \int_{\Sigma} \varphi^{\prime}(u) T_{k-1} \circ A\left(X^T, X^T\right)d\mu \\
&\qquad +I_{k-1,1}\varphi\left(\frac{1}{(k-2)\binom{n}{k-2} I_{k-1,1}} \int_{\Sigma} T_{k-2}\left(X^T, X^T\right) d \mu\right).
\end{align*}
\end{theorem}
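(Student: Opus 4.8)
The plan is to run the argument behind Proposition \ref{prop rn comp} inside the space form $\bar M^{n+1}(K)$ — where the Newton tensors are still divergence-free, so that the weighted Minkowski formula \eqref{weighted mink space form} applies verbatim — and then to trade the ``true'' curvature average produced by the Jensen deficit for the weighted quermassintegral ratio $I_{k-2,2}/I_{k-1,1}$, paying for this trade with a non-negative error term whose sign is dictated by $K$. First I would insert $f=\varphi(u)$ into \eqref{weighted mink space form}. Using $\nabla_\Sigma(\varphi(u))=\varphi'(u)A(X^T)$ (valid in any warped product with $X=\lambda(r)\partial_r$), this gives the space-form analogue of \eqref{eq 1}:
\[
\int_\Sigma \varphi(u)\,\alpha H_{k-1}\,d\mu+\frac{1}{k\binom{n}{k}}\int_\Sigma \varphi'(u)\,T_{k-1}\circ A(X^T,X^T)\,d\mu=\int_\Sigma \psi(u)\,H_k\,d\mu,\qquad \psi(t)=t\varphi(t).
\]

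Next I apply the Jensen-deficit identity of Lemma \ref{lem jensen} twice. On the right, with the probability measure $H_k\,d\mu/I_k$, the mean of $u$ equals $\int_\Sigma uH_k\,d\mu/I_k=I_{k-1,1}/I_k$, by the Hsiung--Minkowski formula \eqref{eq mink} (which gives $\int_\Sigma uH_k\,d\mu=\int_\Sigma\alpha H_{k-1}\,d\mu=I_{k-1,1}$). On the first summand on the left, with the probability measure $\alpha H_{k-1}\,d\mu/I_{k-1,1}$, the mean of $u$ is, by the very definition of $\widetilde{J}_{k-2}$, equal to $\int_\Sigma u\alpha H_{k-1}\,d\mu/I_{k-1,1}=\widetilde{J}_{k-2}/I_{k-1,1}$. (All these quantities make sense: on a closed hypersurface $H_k>0$ forces $k$-convexity by \cite[Proposition 3.2]{BC97}, hence $H_1,\dots,H_k>0$ and $T_0,\dots,T_{k-1}>0$, while $\alpha=\cosh r>0$ for $K=-1$ and $\alpha=\cos r>0$ on the open hemisphere for $K=1$, so $I_k>0$ and $I_{k-1,1}>0$; also $\mathrm{Range}(u)$ is a closed interval and both averages lie in it.) Substituting and cancelling the common term $I_{k-1,1}\varphi(I_{k-1,1}/I_k)$ yields the \emph{exact} identity
\begin{equation*}
\begin{aligned}
& I_{k-1,1}\Bigl(\varphi\bigl(\tfrac{I_{k-1,1}}{I_k}\bigr)-\varphi\bigl(\tfrac{\widetilde{J}_{k-2}}{I_{k-1,1}}\bigr)\Bigr)\\
& \quad +\int_\Sigma\Bigl(\int_{\frac{I_{k-1,1}}{I_k}}^{u(x)}H_k(x)\psi''(t)-\int_{\frac{\widetilde{J}_{k-2}}{I_{k-1,1}}}^{u(x)}\alpha(x) H_{k-1}(x)\varphi''(t)\Bigr)(u(x)-t)\,dt\,d\mu(x)\\
& =\frac{1}{k\binom{n}{k}}\int_\Sigma\varphi'(u)\,T_{k-1}\circ A(X^T,X^T)\,d\mu,
\end{aligned}
\end{equation*}
which is already the asserted (in)equality except that the first bracket contains $\widetilde{J}_{k-2}/I_{k-1,1}$ rather than $I_{k-2,2}/I_{k-1,1}$.

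To close this gap I would apply \eqref{weighted mink space form} once more, now with the index shifted by one and weight $f=\alpha$: since $\bar\nabla\alpha=\lambda''(r)\partial_r=-K\lambda(r)\partial_r=-KX$, one has $\nabla_\Sigma\alpha=-KX^T$, and \eqref{weighted mink space form} becomes a generalized Minkowski identity of the form
\[
I_{k-2,2}-\widetilde{J}_{k-2}=K\,c_{n,k}\int_\Sigma T_{k-2}(X^T,X^T)\,d\mu,
\]
with $c_{n,k}>0$ the dimensional constant read off from \eqref{weighted mink space form} (i.e. the constant appearing in the extra term of (ii)). Since $k$-convexity gives $T_{k-2}>0$, the right-hand integral is $\ge 0$. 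Hence for $K=1$ we get $\widetilde{J}_{k-2}\le I_{k-2,2}$, so $\varphi(\widetilde{J}_{k-2}/I_{k-1,1})\le\varphi(I_{k-2,2}/I_{k-1,1})$ by monotonicity; replacing the argument in the first bracket of the exact identity only enlarges $-\varphi(\,\cdot\,)$, turning the identity into the inequality of (i). For $K=-1$ we have instead $\widetilde{J}_{k-2}/I_{k-1,1}=I_{k-2,2}/I_{k-1,1}+b$ with $b=c_{n,k}I_{k-1,1}^{-1}\int_\Sigma T_{k-2}(X^T,X^T)\,d\mu\ge 0$, so subadditivity gives $\varphi(\widetilde{J}_{k-2}/I_{k-1,1})\le\varphi(I_{k-2,2}/I_{k-1,1})+\varphi(b)$; moving the resulting term $I_{k-1,1}\varphi(b)$ to the right produces precisely the extra summand of (ii).

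The main obstacle is exactly this last replacement step: in a curved space form the Jensen-deficit average $\widetilde{J}_{k-2}/I_{k-1,1}$ is genuinely different from the weighted quermassintegral ratio $I_{k-2,2}/I_{k-1,1}$, and one must both supply the correcting generalized Minkowski formula and know the sign of the correction — which is what forces the dichotomy between the monotone case ($K=1$, sphere) and the subadditive case ($K=-1$, hyperbolic). A minor technical point to dispatch along the way is that $\varphi$ must be defined — and monotone, resp. subadditive — on an interval containing not just $\mathrm{Range}(u)$ but also $I_{k-2,2}/I_{k-1,1}$ and, in (ii), the value $b$; this is automatic if $\varphi$ is taken on $[0,\infty)$, which is the natural domain of a subadditive function. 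Finally, note that, unlike in the Euclidean statements, we make no attempt to sign the right-hand side $\int_\Sigma\varphi'(u)\,T_{k-1}\circ A(X^T,X^T)\,d\mu$, so beyond $H_k>0$ no further curvature hypothesis on $\Sigma$ is required.
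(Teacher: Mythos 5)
Your proposal is correct and follows essentially the same route as the paper: insert $f=\varphi(u)$ into the weighted Minkowski formula, apply the Jensen-deficit lemma with respect to the probability measures $H_k\,d\mu/I_k$ and $\alpha H_{k-1}\,d\mu/I_{k-1,1}$ to obtain an exact identity featuring $\widetilde J_{k-2}/I_{k-1,1}$, and then convert $\widetilde J_{k-2}$ into $I_{k-2,2}$ via a second application of the weighted Minkowski formula with $f=\alpha$, using $\nabla_\Sigma\alpha=-KX^T$ and the sign of $K$ to invoke monotonicity (for $K=1$) or subadditivity (for $K=-1$). Your observation that $\varphi$ should be defined and monotone/subadditive on an interval large enough to contain $I_{k-2,2}/I_{k-1,1}$ (and, in (ii), the correction term) is a reasonable technical caveat that the paper glosses over.
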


\begin{proof}
Applying the weighted Minkowski formula \eqref{weighted mink space form} for $f=\varphi(u)$,  we have
\begin{align}\label{s6.pf1}
\int_{\Sigma} \alpha H_{k-1} \varphi(u) d \mu & +\frac{1}{k\binom{n}{k}} \int_{\Sigma} \varphi^{\prime}(u) T_{k-1} \circ A\left(X^T, X^T\right) \nonumber\\
& =\int_{\Sigma} H_k \varphi(u) u d \mu=\int_{\Sigma} \psi(u) H_k d \mu.
\end{align}
On the one hand, by Lemma \ref{lem jensen}, the first term of \eqref{s6.pf1} satisfies
\begin{equation*}
\begin{split}
& \int_{\Sigma} \alpha H_{k-1} \varphi(u) d \mu\\
=& \varphi\left(\frac{\int_\Sigma u \alpha H_{k-1} d \mu}{\int_\Sigma\alpha H_{k-1} d \mu}\right)\int_\Sigma\alpha H_{k-1} d\mu \\
& +\int_{\Sigma} \alpha(x) H_{k-1}(x) \int_{\frac{\widetilde {J}_{k-2}}{I_{k-1, 1}}}^{u(x)} \varphi^{\prime \prime}(t)(u(x)-t) d t d \mu(x)\\
=& \varphi\left(\frac{\widetilde{J}_{k-2}}{I_{k-1,1}}\right)I_{k-1,1}
 +\int_{\Sigma} \alpha(x) H_{k-1}(x) \int_{\frac{\widetilde {J}_{k-2}}{I_{k-1, 1}}}^{u(x)} \varphi^{\prime \prime}(t)(u(x)-t) d t d \mu(x).
\end{split}
\end{equation*}
Here
\begin{equation}\label{tilde J}
\begin{split}
\widetilde {J}_{k-2}
=& \int_\Sigma u \alpha H_{k-1} d \mu\\
=& \int_\Sigma \alpha^2 H_{k-2}d\mu +\frac{1}{(k-2)\binom{n}{k-2}} \int_\Sigma T_{k-2}\left(\nabla \alpha, X^{T}\right)d\mu\\
=& I_{k-2,2} -\frac{{K}}{(k-2)\binom{n}{k-2}} \int_\Sigma T_{k-2}\left(X^{T}, X^{T}\right)d\mu.
\end{split}
\end{equation}
where in the second equality we used the formula \eqref{weighted mink space form} again.

On the other hand, by Theorem \ref{thm weighted supp}, the right hand side of \eqref{s6.pf1} satisfies
\begin{align*}
\int_\Sigma \psi(u) H_k d\mu
= I_{k-1,1} \varphi\left(\frac{I_{k-1,1}}{I_k}\right)+\int _\Sigma H_k(x) \int_{\frac{I_{k-1,1}}{I_k}}^{u(x)} \psi^{\prime \prime}(t)(u(x)-t) d t d \mu(x).
\end{align*}
Therefore
\begin{equation}\label{eq diff phi}
\begin{split}
&I_{k-1,1} \left(\varphi\left(\frac{I_{k-1,1}}{I_k}\right)
-\varphi\left(\frac{\widetilde{J}_{k-2}}{I_{k-1,1}}\right) \right)\\
&+\int_{\Sigma}\left(\int_{\frac{I_{k-1,1}}{I_k}}^{u(x)} H_k(x) \psi^{\prime \prime}(t)-\int_{\frac{\widetilde{J}_{k-2}}{I_{k-1,1}}}^{u(x)} \alpha(x) H_{k-1}(x) \varphi^{\prime \prime}(t)\right)(u(x)-t) d t d \mu(x)\\
=&\frac{1}{k\binom{n}{k}} \int_{\Sigma} \varphi^{\prime}(u) T_{k-1} \circ A\left(X^T, X^T\right)d\mu.
\end{split}
\end{equation}
We want to compare
$\varphi\left(\frac{I_{k-1,1}}{I_k}\right)$ with $\varphi\left(\frac{I_{k-2,2}}{I_{k-1,1}}\right)$ instead of $\varphi\left(\frac{\widetilde{J}_{k-2}}{I_{k-1,1}}\right)$.

In the case where $K=1$, by monotonicity of $\varphi$ and \eqref{tilde J}, we have
\begin{align}\label{ineq K=1}
\varphi\left(\frac{\widetilde{J}_{k-2}}{I_{k-1,1}}\right)
\le \varphi\left(\frac{ I_{k-2,2}}{I_{k-1,1}}\right).
\end{align}

In the case where $K=-1$, by sub-additivity of $\varphi$ and \eqref{tilde J}, we have
\begin{equation}\label{ineq K=-1}
\begin{split}
\varphi\left(\frac{\widetilde{J}_{k-2}}{I_{k-1,1}}\right)
\le& \varphi\left(\frac{ I_{k-2,2}}{I_{k-1,1}}\right)+ \varphi\left(\frac{1}{(k-2)\binom{n}{k-2} I_{k-1,1}} \int_\Sigma T_{k-2}\left(X^T, X^T\right) d \mu\right).
\end{split}
\end{equation}
Putting \eqref{ineq K=1} or \eqref{ineq K=-1} into \eqref{eq diff phi}, we get the result.
\end{proof}
Although the inequality in Theorem \ref{s6.thm1} may seem complex, substituting different choices of $\varphi$ produces results similar to those in Section \ref{sec upper bounds}. These results still follow the general framework:
\begin{equation*}
\boxed{ \quad \text{deficit} + \text{distance}^2 \le \text{integral}. \quad}
\end{equation*}
For brevity, we omit them here.
\begin{remark}
Motivated by Theorem \ref{s6.thm1}, a natural question arises: does the following Alexandrov-Fenchel type inequality hold for sufficiently convex hypersurfaces in a space form?
\begin{align*}
\left(\int_\Sigma \alpha H_k d\mu\right)^2
\ge \int_\Sigma \alpha^2 H_{k-1}d\mu \int_\Sigma H_{k+1}d\mu.
\end{align*}
\end{remark}

\subsection{The warped product case}

We consider the warped product $(\bar{M}^{n+1}, \bar{g})$, as defined in Section \ref{sec prelim}, which satisfies the following assumption:
\begin{assumA}$\ $
\begin{enumerate}
\item $(N^n, g_N)$ has constant curvature $C$.
\item\label{h2} $\lambda'>0$ in the interior of $I$.
\item\label{h4} $\frac{\lambda^{\prime \prime}}{\lambda}+\frac{C-{\lambda^{\prime}}^2}{\lambda^2}>0$ in the interior of $I$.
\end{enumerate}
\end{assumA}
\noindent Conditions \eqref{h2} and \eqref{h4} correspond to the conditions (H2) and (H4), respectively, in \cite{Brendle2013}. These assumptions are natural, as Brendle \cite{Brendle2013} showed that the Alexandrov theorem holds for $(\bar{M}^{n+1}, \bar{g})$ under Assumption A and some other conditions. Besides space forms, examples of manifolds satisfying these conditions include the de Sitter-Schwarzschild and Reissner-Nordström manifolds.

\begin{theorem}
Suppose $\left(\bar{M}^{n+1}, \bar{g}\right)$ satisfies Assumption A.
Let $\Sigma=\partial \Omega$ be a smooth, closed star-shaped hypersurface in $\bar{M}^{n+1}$ with $H_k \ge 0$ and $H_{k-1}>0$ and let $\varphi$ be a $C^2$ non-decreasing function on $I=\mathrm{Range}(u)$. Then we have
\begin{equation*}
\int_{\Sigma} H_k \varphi(u) d \mu
\ge I_k \varphi\left(\frac{J_{k-1}}{I_k}\right)+\frac{1}{2} \min_I \varphi^{\prime \prime} \delta_{2, k}\left(\Omega, B_0(r)\right)^2,
\end{equation*}
where $r=\frac{\int_{\Sigma} H_k u d \mu}{I_k}$. If $H_k>0$ and $\varphi''$ is injective, then the equality holds if and only if $\Sigma$ is a slice $\{r=\mathrm{constant}\}$.

In particular, if $\varphi$ is convex, then
$$
\int_{\Sigma} H_k \varphi(u) d \mu \ge I_k \varphi\left(\frac{J_{k-1}}{I_k}\right).
$$
\end{theorem}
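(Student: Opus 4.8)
The plan is to run the Jensen deficit computation behind Theorem~\ref{thm weighted supp} with the probability measure $dm:=H_k\,d\mu/I_k$, and then to correct the centre of the resulting expansion using monotonicity of $\varphi$. Write $\rho:=\frac{\int_\Sigma H_k\langle X,\nu\rangle\,d\mu}{I_k}$ for the reference radius $r$ appearing in the statement. First the measure is legitimate: $H_k\ge 0$, and since $\lambda'>0$ the point of $\Sigma$ at which the radial variable is maximal is an elliptic point (its principal curvatures dominate those of the slice through it, hence are positive), so $I_k=\int_\Sigma H_k\,d\mu>0$. Applying Lemma~\ref{lem jensen} with $\psi=\varphi$ and $u=\langle X,\nu\rangle$ gives
\[
\int_\Sigma H_k\varphi(u)\,d\mu-I_k\varphi(\rho)=\int_\Sigma H_k(x)\int_{\rho}^{u(x)}\varphi''(t)\,(u(x)-t)\,dt\,d\mu(x).
\]
On the interval between $\rho$ and $u(x)$ one has $\varphi''(t)\ge\min_I\varphi''$ while $u(x)-t$ keeps the sign of $u(x)-\rho$, so the inner integral is at least $\tfrac12(\min_I\varphi'')(u(x)-\rho)^2$; integrating against $H_k\,d\mu$ and using $\delta_{2,k}(\Omega,B_0(\rho))^2=\int_\Sigma(u-\rho)^2H_k\,d\mu$ yields $\int_\Sigma H_k\varphi(u)\,d\mu\ge I_k\varphi(\rho)+\tfrac12(\min_I\varphi'')\,\delta_{2,k}(\Omega,B_0(\rho))^2$.

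It remains to replace $\varphi(\rho)$ by $\varphi(J_{k-1}/I_k)$. Since $\varphi$ is non-decreasing, this follows once $\rho\ge J_{k-1}/I_k$, i.e.\ $\int_\Sigma H_k\langle X,\nu\rangle\,d\mu\ge\int_\Sigma\alpha H_{k-1}\,d\mu=J_{k-1}$; feeding $f\equiv 1$ into the weighted Minkowski formula~\eqref{weighted mink} reduces this to the single claim
\[
\int_\Sigma\mathrm{div}_\Sigma(T_{k-1})(X^T)\,d\mu\ \ge\ 0 ,
\]
which is the crux. The plan here is to expand $\mathrm{div}_\Sigma T_{k-1}$ through the ambient curvature by the Codazzi equation (it becomes, up to a dimensional constant, $T_{k-2}$ contracted with the ambient Riemann tensor carrying one normal slot), and then to substitute the explicit curvature of the warped metric together with $X=\lambda\,\partial_r$. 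For $N$ of constant curvature $C$ the contraction collapses, up to a positive factor, to a multiple of
\[
\Bigl(\tfrac{\lambda''}{\lambda}+\tfrac{C-(\lambda')^2}{\lambda^2}\Bigr)\,\langle X,\nu\rangle\,T_{k-2}(X^T,X^T).
\]
The bracketed factor is positive by Assumption~A; $\langle X,\nu\rangle=\lambda\langle\partial_r,\nu\rangle>0$ by star-shapedness; and $T_{k-2}$ is positive definite because $H_{k-1}>0$ on a closed hypersurface forces $(k-1)$-convexity, exactly as used in the proof of Theorem~\ref{thm rn 1/u} (via \cite{BC97}). Hence $\mathrm{div}_\Sigma(T_{k-1})(X^T)\ge 0$ pointwise, more than what is needed. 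Getting the signs right in this Codazzi computation is the only genuinely delicate point; the remainder is bookkeeping already carried out in Sections~\ref{sec. identity}--\ref{sec upper bounds}.

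Combining the two steps gives the displayed inequality. If $\varphi$ is convex then $\min_I\varphi''\ge 0$, so the $\delta_{2,k}$ term is non-negative and may be dropped, which is the ``in particular'' statement. For the equality case, assume $H_k>0$ and $\varphi''$ injective. Equality in the Jensen remainder estimate forces $u\equiv\rho$ by the strict monotonicity argument from the proof of Theorem~\ref{thm rn 1/u}; equality in $\rho\ge J_{k-1}/I_k$ forces $\mathrm{div}_\Sigma(T_{k-1})(X^T)\equiv 0$, and since in the pointwise identity above every factor is positive except possibly $T_{k-2}(X^T,X^T)$, positive definiteness of $T_{k-2}$ forces $X^T\equiv 0$. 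Thus $X=\langle X,\nu\rangle\,\nu$ is everywhere radial, i.e.\ $\nu\parallel\partial_r$, so $\Sigma$ is a slice $\{r=\mathrm{const}\}$; conversely slices give equality by direct substitution.
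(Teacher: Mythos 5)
The proposal is correct and follows essentially the same route as the paper: apply the Jensen-deficit lemma (Lemma~\ref{lem jensen}) to the probability measure $H_k\,d\mu/I_k$, then shift the centre $\bar u=\int_\Sigma H_k u\,d\mu/I_k$ down to $J_{k-1}/I_k$ using monotonicity of $\varphi$ together with the weighted Minkowski formula~\eqref{weighted mink} and the pointwise inequality $\mathrm{div}_\Sigma T_{k-1}(X^T)\ge 0$. The only cosmetic differences are that you sketch a Codazzi derivation of $\mathrm{div}_\Sigma T_{k-1}(X^T)\ge 0$ (the paper simply cites \cite{kwong2018weighted}, Proposition~1(2) and Lemma~1(2a), which is exactly this fact) and establish $I_k>0$ directly via the elliptic point at maximal radius instead of citing \cite{BC97}; both substitutions are legitimate.
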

\begin{proof}
By \cite{kwong2018weighted} Proposition 1 (2) and Lemma 1 (2a), we have that $\mathrm{div}_{\Sigma} T_{k-1} (X^T)\ge0$ and hence \eqref{weighted mink} gives
\begin{align*}
\int_{\Sigma} \alpha H_{k-1}d\mu= \int_\Sigma H_k\langle X, \nu\rangle d\mu-\frac{1}{k\binom{n}{k}} \int_{\Sigma} \mathrm{div}_{\Sigma} T_{k-1} (X^T)d\mu
\le \int_\Sigma H_k\langle X, \nu\rangle d\mu.
\end{align*}
Note that $I_k>0$ by \cite[Proposition 3.2]{BC97}. So Lemma \ref{lem jensen} applied to the probability measure $\frac{H_k d \mu}{I_k}$ gives
\begin{align*}
\frac{\int_{\Sigma} H_k \varphi(u) d \mu}{I_k} \ge& \varphi\left(\frac{\int_{\Sigma} H_k u d \mu}{I_k}\right)+ \frac{1}{2 I_k} \min_I \varphi^{\prime \prime} \int_{\Sigma} H_k(u-\bar u)^2d\mu\\
\ge& \varphi\left(\frac{J_{k-1}}{I_k}\right)+ \frac{1}{2 I_k} \min_I \varphi^{\prime \prime} \int_{\Sigma} H_k(u-\bar u)^2d\mu
\end{align*}
where $\bar u=\frac{\int_{\Sigma} H_k u d \mu}{I_k}$.

If $H_k > 0$ and $\varphi''$ is injective, then equality holds if and only if $\Sigma$ is a slice, following the same argument as in Theorem \ref{thm rn 1/u}.
\end{proof}

\begin{bibdiv}
\begin{biblist}
\bibliographystyle{amsplain}

\bib{Alencar1998}{article}{
author={Alencar, Hilario},
author={Colares, A. Gervasio},
title={Integral formulas for the $r$-mean curvature linearized operator
of a hypersurface},
journal={Ann. Global Anal. Geom. },
volume={16},
date={1998},
number={3},
pages={203--220},
}

\bib{BC97}{article}{
author={Barbosa, Jo\~ao Lucas Marques},
author={Colares, Ant\^onio Gervasio},
title={Stability of hypersurfaces with constant $r$-mean curvature},
journal={Ann. Global Anal. Geom. },
volume={15},
date={1997},
number={3},
pages={277--297},
}

\bib{Brendle2013}{article}{
author={Brendle, Simon},
title={Constant mean curvature surfaces in warped product manifolds},
journal={Publ. Math. Inst. Hautes \'{E}tudes Sci. },
volume={117},
date={2013},
pages={247--269},
issn={0073-8301},
}

\bib{Brendle2016}{article}{
author={Brendle, Simon},
author={Hung, Pei-Ken},
author={Wang, Mu-Tao},
title={A Minkowski inequality for hypersurfaces in the anti--de
Sitter--Schwarzschild manifold},
journal={Comm. Pure Appl. Math. },
volume={69},
date={2016},
number={1},
pages={124--144},
issn={0010-3640},
}

\bib{deLima2016}{article}{
author={de Lima, Levi Lopes},
author={Gir\~ao, Frederico},
title={An Alexandrov-Fenchel-type inequality in hyperbolic space with an
application to a Penrose inequality},
journal={Ann. Henri Poincar\'e},
volume={17},
date={2016},
number={4},
pages={979--1002},
issn={1424-0637},
}

\bib{Ge2015}{article}{
author={Ge, Yuxin},
author={Wang, Guofang},
author={Wu, Jie},
title={The GBC mass for asymptotically hyperbolic manifolds},
journal={Math. Z. },
volume={281},
date={2015},
number={1-2},
pages={257--297},
}

\bib{Girao2019}{article}{
author={Gir\~ao, Frederico},
author={Pinheiro, Diego},
author={Pinheiro, Neilha M. },
author={Rodrigues, Diego},
title={Weighted Alexandrov-Fenchel inequalities in hyperbolic space and a
conjecture of Ge, Wang, and Wu},
journal={Proc. Amer. Math. Soc. },
volume={149},
date={2021},
number={1},
pages={369--382},
}

\bib{girao2020weighted}{article}{
author={Gir\~{a}o, Frederico},
author={Rodrigues, Diego},
title={Weighted geometric inequalities for hypersurfaces in sub-static
manifolds},
journal={Bull. Lond. Math. Soc. },
volume={52},
date={2020},
number={1},
pages={121--136},
issn={0024-6093},
}

\bib{GreenOsher2003}{article}{
author={Green, Mark},
author={Osher, Stanley},
title={Steiner polynomials, Wulff flows, and some new isoperimetric
inequalities for convex plane curves},
journal={Asian J. Math. },
volume={3},
date={1999},
number={3},
pages={659--676},
}

\bib{Groemer1996}{book}{
author={Groemer, H. },
title={Geometric applications of Fourier series and spherical harmonics},
series={Encyclopedia of Mathematics and its Applications},
volume={61},
publisher={Cambridge University Press, Cambridge},
date={1996},
pages={xii+329},
}

\bib{GL09}{article}{
author={Guan, Pengfei},
author={Li, Junfang},
title={The quermassintegral inequalities for $k$-convex starshaped
domains},
journal={Adv. Math. },
volume={221},
date={2009},
number={5},
pages={1725--1732},
issn={0001-8708},
}

\bib{Hs54}{article}{
   author={Hsiung, Chuan-Chih},
   title={Some integral formulas for closed hypersurfaces},
   journal={Math. Scand.},
   volume={2},
   date={1954},
   pages={286--294},
   issn={0025-5521},
}

\bib{HL22}{article}{
   author={Hu, Yingxiang},
   author={Li, Haizhong},
   title={Geometric inequalities for static convex domains in hyperbolic
   space},
   journal={Trans. Amer. Math. Soc.},
   volume={375},
   date={2022},
   number={8},
   pages={5587--5615},
}

\bib{HLW22}{article}{
author={Hu, Yingxiang},
author={Li, Haizhong},
author={Wei, Yong},
title={Locally constrained curvature flows and geometric inequalities in
hyperbolic space},
journal={Math. Ann. },
volume={382},
date={2022},
number={3-4},
pages={1425--1474},
issn={0025-5831},
}

\bib{kwong2018weighted}{article}{
author={Kwong, Kwok-Kun},
author={Lee, Hojoo},
author={Pyo, Juncheol},
title={Weighted Hsiung-Minkowski formulas and rigidity of umbilical
hypersurfaces},
journal={Math. Res. Lett. },
volume={25},
date={2018},
number={2},
pages={597--616},
}

\bib{kwong2014new}{article}{
author={Kwong, Kwok-Kun},
author={Miao, Pengzi},
title={A new monotone quantity along the inverse mean curvature flow in
$\mathbb{R}^n$},
journal={Pacific J. Math. },
volume={267},
date={2014},
number={2},
pages={417--422},
issn={0030-8730},
}

\bib{KM15}{article}{
author={Kwong, Kwok-Kun},
author={Miao, Pengzi},
title={Monotone quantities involving a weighted $\sigma_k$ integral along
inverse curvature flows},
journal={Commun. Contemp. Math. },
volume={17},
date={2015},
number={5},
pages={1550014, 10},
issn={0219-1997},
}

\bib{kwong2023geometric}{article}{
author={Kwong, Kwok-Kun},
author={Wei, Yong},
title={Geometric inequalities involving three quantities in warped
product manifolds},
journal={Adv. Math. },
volume={430},
date={2023},
pages={Paper No. 109213, 28},
issn={0001-8708},
}

\bib{KWWW2021}{article}{
author={Kwong, Kwok-Kun},
author={Wei, Yong},
author={Wheeler, Glen},
author={Wheeler, Valentina-Mira},
title={On an inverse curvature flow in two-dimensional space forms},
journal={Math. Ann. },
volume={384},
date={2022},
number={1-2},
pages={285--308},
issn={0025-5831},
}

\bib{MR91}{article}{
   author={Montiel, Sebasti\'an},
   author={Ros, Antonio},
   title={Compact hypersurfaces: the Alexandrov theorem for higher order
   mean curvatures},
   conference={
      title={Differential geometry},
   },
   book={
      series={Pitman Monogr. Surveys Pure Appl. Math.},
      volume={52},
      publisher={Longman Sci. Tech., Harlow},
   },
   isbn={0-582-05590-3},
   date={1991},
   pages={279--296},
}

\bib{SX19}{article}{
   author={Scheuer, Julian},
   author={Xia, Chao},
   title={Locally constrained inverse curvature flows},
   journal={Trans. Amer. Math. Soc.},
   volume={372},
   date={2019},
   number={10},
   pages={6771--6803},
}

\bib{Schneider2014}{book}{
author={Schneider, Rolf},
title={Convex bodies: the Brunn-Minkowski theory},
series={Encyclopedia of Mathematics and its Applications},
volume={151},
edition={expanded edition},
publisher={Cambridge University Press, Cambridge},
date={2014},
pages={xxii+736},
isbn={978-1-107-60101-7},
}

\bib{WU24}{article}{
author={Wu, Jie},
title={Weighted Alexandrov-Fenchel type inequalities for hypersurfaces in $\mathbb R^n$},
journal={Bull. Lond. Math. Soc.},
volume={56},
date={2024},
number={8},
pages={2634--2646},
}

\bib{Xia2014}{article}{
author={Xia, Chao},
title={A Minkowski type inequality in space forms},
journal={Calc. Var. Partial Differential Equations},
volume={55},
date={2016},
number={4},
pages={Art. 96, 8},
}
\end{biblist}
\end{bibdiv}
\end{document}